\documentclass[11pt,a4paper]{article}
\usepackage{mathrsfs}
\usepackage{multirow}
 \usepackage{epsfig}
 \usepackage{epstopdf}
\usepackage[T1]{fontenc}
\usepackage{geometry}
\usepackage{amsmath}
\usepackage{bm}
\usepackage{amsbsy,latexsym,amsfonts, epsfig, color, authblk, amssymb, graphics, bm}
\usepackage{epsf,slidesec,epic,eepic}
\usepackage{fancybox}
\usepackage{fancyhdr}
\usepackage{setspace}
\usepackage{cases}
\usepackage{nccmath}
\usepackage{url}
\setlength{\abovecaptionskip}{2pt}
\setlength{\belowcaptionskip}{0pt}

\usepackage[colorlinks, citecolor=blue]{hyperref}

\newtheorem{theorem}{Theorem}[section]
\newtheorem{lemma}{Lemma}[section]

\newtheorem{definition}{Definition}[section]
\newtheorem{example}{Example}[section]
\newtheorem{proposition}{Proposition}[section]
\newtheorem{corollary}{Corollary}[section]

\newtheorem{remark}{Remark}[section]

\newtheorem{alemma}{Lemma}

\newenvironment{proof}{{\noindent \bf Proof:}}{\hfill$\Box$\medskip}
\definecolor{lred}{rgb}{1,0.8,0.8}
\definecolor{lblue}{rgb}{0.8,0.8,1}
\definecolor{dred}{rgb}{0.6,0,0}
\definecolor{dblue}{rgb}{0,0,0.5}
\definecolor{dgreen}{rgb}{0,0.5,0.5}

\title{Computation of graphical derivatives of normal cone maps to
 a class of conic constraint sets\footnote{Supported by the National Natural Science Foundation of China under project No.11571120 and the Natural Science Foundation of Guangdong Province under project No.2015A030313214.}}
  \author{Yulan Liu\footnote{School of Applied Mathematics, Guangdong University of Technology, Guangzhou.},
 \ Ying Sun\footnote{ School of Mathematics, South China University of Technology, Guangzhou.}
  \ \ {\rm and}\ \
 Shaohua Pan\footnote{Corresponding author(shhpan@scut.edu.cn), School of Mathematics, South China University of Technology, Guangzhou.}}

 \date{May 5, 2018\\ (revised version)}

 \begin{document}

 \maketitle

 \begin{abstract}
  This paper concerns with the graphical derivative of the normals to
  the conic constraint $g(x)\in\!K$, where $g\!:\mathbb{X}\to\mathbb{Y}$
  is a twice continuously differentiable mapping and $K\subseteq\mathbb{Y}$ is a nonempty
  closed convex set assumed to be $C^2$-cone reducible. Such a generalized derivative
  plays a crucial role in characterizing isolated calmness of the solution maps to
  generalized equations whose multivalued parts are modeled via the normals to
  the nonconvex set $\Gamma=g^{-1}(K)$. The main contribution of this paper
  is to provide an exact characterization for the graphical derivative of
  the normals to this class of nonconvex conic constraints under an assumption
  without requiring the nondegeneracy of the reference point as the papers
  \cite{Gfrerer17,Mordu15,Mordu151} do.
 \end{abstract}

 \noindent
 {\bf Keywords:} graphical derivative, regular and limiting normal map, isolated calmness

 \medskip
 \noindent
 {\bf Mathematics Subject Classification(2010):} 49K40, 90C31, 49J53

 \section{Introduction}\label{sec1}

 Generalized derivatives introduced in modern variational analysis represent
 an efficient tool to study stability analysis of multifunctions, especially
 the so-called solution maps associated with parameter-dependent variational
 inequalities or generalized equations; see Rockafellar and Wets \cite{RW98},
 Klatte and Kummer \cite{KK02}, Mordukhovich \cite{Mordu06}, and Dontchev
 and Rockafellar \cite{DR09}. The stability properties of the solution maps
 to generalized equations, whose multivalued parts are modelled via regular
 normals to the polyhedral conic constraints, have been analyzed in the seventies,
 above all in the papers by Robinson \cite{Robinson79,Robinson81,Robinson82},
 and an overview of available results in this setting can be found in
 Klatte and Kummer \cite{KK02} and Dontchev and Rockafellar \cite[Chapter 2E]{DR09}.
 In the recent decade, some active research is given to the stability properties
 of the solution maps to those generalized equations associated with nonpolyhedral conic
 constraints \cite{BS00}, such as positive semidefinite conic constraints \cite{Sun06,ZhangZ16},
 Lorentz conic constraints \cite{Outrata11,BR05,HMordu17}, and more general constraints
 associated with cone reducible closed convex sets \cite{DingSZ17,KK13,Mordu151}.

 \medskip

 Let $\mathbb{X}, \mathbb{Y}$ and $\mathbb{P}$ be finite dimensional
 vector spaces endowed with the inner product $\langle \cdot,\cdot\rangle$
 and its induced norm $\|\cdot\|$. Let $g\!:\mathbb{X}\to\mathbb{Y}$ be
 a twice continuously differentiable mapping, and let $K\subseteq\!\mathbb{Y}$
 be a nonempty closed convex set which is assumed to be $C^2$-cone reducible.
 The class of $C^2$-cone reducible sets is rich, including all the polyhedral
 convex sets and many non-polyhedral sets such as the second-order cone
 \cite[Lemma 15]{BR05}, the positive semidefinite cone \cite[Example 3.140]{BS00},
 and the epigraph cone of the Ky Fan matrix $k$-norm \cite{Ding12}. Moreover,
 the Cartesian product of $C^2$-cone reducible sets is also $C^2$-cone reducible
 \cite{Shapiro03}.
 This paper focuses on the computation of the graphical derivative of the normal
 cone mappings to the conic constraint $g(x)\!\in K$ or equivalently the set
 \begin{equation}\label{Gamma}
  \Gamma:=g^{-1}(K),
 \end{equation}
 which is also the set of the zeros to the following multifunction associated to $g(x)\in K$:
  \begin{equation}\label{MGmap}
  \mathcal{G}(x):=g(x)-K\quad{\rm for}\ x\in\mathbb{X}.
 \end{equation}
 Since our assumptions throughout this paper ensure that the regular
 and limiting normal cones to $\Gamma$ agree, we use the generic normal
 cone symbol $\mathcal{N}$ below; see Section \ref{sec2} for details.

 \medskip

 The present study, being certainly of its own interest, is motivated by
 the subsequent application to the characterization of the isolated calmness property
 for parameterized equilibria represented as
 the solution map to the following generalized equation (GE)
 \begin{equation}\label{GE}
   0\in F(p,x)+\mathcal{N}_\Gamma(x),
 \end{equation}
 where $F\!:\mathbb{P}\times\mathbb{X}\to\mathbb{X}$ is a locally Lipschitz
 and directionally differentiable mapping, and $\mathcal{N}_\Gamma$
 is the regular normal cone mapping to the set $\Gamma$.
 The solution map of \eqref{GE} is given by
 \begin{equation}\label{MSmap}
  \mathcal{S}(p):=\big\{x\in\mathbb{X}\ |\ 0\in F(p,x)+\mathcal{N}_\Gamma(x)\big\}.
 \end{equation}
 To achieve this goal, motivated by the crucial result due to King and Rockafellar \cite{KR92}
 or Levy \cite{Levy96}, we need to compute the graphical derivative of $\mathcal{S}$
 in terms of the initial problem data of \eqref{GE} and the corresponding values
 at the reference solution point. This amounts to developing the expression of
 the graphical derivative of the normal cone mapping $\mathcal{N}_\Gamma$.
 In addition, the expression of the graphical derivative of $\mathcal{N}_\Gamma$
 is also helpful to the characterization of the regular and limiting normals to
 $\mathcal{N}_\Gamma$.

 \medskip

 When the set $\Gamma$ is convex and the mapping $F$ is continuously differentiable,
 Mordukhovich et al. \cite{Mordu151} provided a formula for calculating
 the graphical derivative of $\mathcal{S}$.
 Recognizing that the convexity assumption on $\Gamma$ is very restrictive,
 they later derived a second-order formula in \cite{Mordu15} for calculating
 the graphical derivative of the regular normal $\widehat{\mathcal{N}}_\Gamma$
 and then that of the solution map $\mathcal{S}$ in terms of Lagrange multipliers
 of the perturbed KKT system and the critical cone of $K$, under the projection
 derivation condition (PDC) on $K$ at a nondegenerate reference point.
 Although the PDC relaxes the polyhedrality assumption imposed on the set $K$
 by \cite{Henrion13}, it actually requires that $K$ has similar properties as
 a polyhedral set does; for example, the PDC holds under the second-order extended
 polyhedricity condition from \cite{BS00}. When $K$ is non-polyhedral convex cone,
 although the PDC always holds at the vertex, the popular positive semidefinite cone
 and Lorentz cone generally do not satisfy this condition at nonzero vertexes
 (see \cite[Corollary 3.5]{HMordu17}). In addition, Gfrerer and Outrata \cite{Gfrerer17}
 also derived a formula for calculating the graphical derivative of the regular normal
 $\widehat{\mathcal{N}}_\Gamma$ by imposing the nondegeneracy of
 the reference point and a weakened version of the reducibility.
 The nondegeneracy of the reference point is strong, and the papers mentioned above
 all require this assumption.

 \medskip

 Recently, for the case where $K$ is the Lorentz cone,
 Hang, Mordukhovich and Sarabi \cite{HMordu17} fully exploited
 the structure of the Lorentz cone and precisely calculated the graphical
 derivative of the normal cone mapping to $\widehat{\mathcal{N}}_\Gamma$
 under an assumption even weaker than the one used in \cite{Gfrerer16-MOR}
 to compute the graphical derivative of $\widehat{\mathcal{N}}_\Gamma$
 with $K=\mathbb{R}_{-}^m$; and for optimization problems with the conic
 constraint $g(x)\in K$, Ding, Sun and Zhang \cite{DingSZ17} verified that
 the KKT solution mapping is robustly isolated calm iff both the strict Robinson constraint
 qualification (SRCQ) and the second order sufficient condition hold. Their results,
 to a certain extent, imply that it is possible to achieve the exact characterization
 for the graphical derivative of $\mathcal{N}_\Gamma$ without
 requiring the nondegeneracy.

 \medskip

 Recall that the SRCQ for the system $g(x)\!\in K$ is said to hold at
 $\overline{x}$ with respect to (w.r.t.) some multiplier
 $\overline{\lambda}\in\mathcal{N}_K(g(\overline{x}))$ if
 \begin{equation}\label{SRCQ}
   g'(\overline{x})\mathbb{X}+\mathcal{T}_{K}(g(\overline{x}))\cap[\![\overline{\lambda}]\!]^{\perp}
   =\mathbb{Y},
 \end{equation}
 which is weaker than the nondegeneracy of $\overline{x}$ w.r.t. the mapping $g$
 and the set $K$:
 \begin{equation}\label{Nondegeneracy}
   g'(\overline{x})\mathbb{X}+{\rm lin}\big[\mathcal{T}_{K}(g(\overline{x}))\big]
   =\mathbb{Y}.
 \end{equation}
 In this work we shall provide an exact characterization for the graphical
 derivative of $\mathcal{N}_{\Gamma}$ under the metric subregularity
 of $\mathcal{G}$ and a multifunction $\Phi$ (see \eqref{Phimap} for its definition)
 and the SRCQ for the system $g(x)\in K$. Among others, the metric
 subregularity of $\Phi$ is only used for deriving the lower estimation
 for the graphical derivative of $\mathcal{N}_{\Gamma}$, while the SRCQ
 for the system $g(x)\in K$ is used for achieving the upper estimation.
 Since our upper estimation only requires the SRCQ for the system $g(x)\in K$,
 one can achieve the isolated calmness of $\mathcal{S}$ without the nondegeneracy.
 During the reviewing of this paper, we learned that Gfrerer and Mordukhovich
 \cite{Gfrerer171} skillfully derived the lower estimation for the graphical
 derivative of $\mathcal{N}_{\Gamma}$ only under the metric subregularity of $\mathcal{G}$,
 which is a trivial assumption. Although their exact characterization for
 the graphical derivative of $\mathcal{N}_{\Gamma}$ does not require
 the uniqueness of the multipliers, one needs to solve
 a linear conic optimization problem to achieve the required multiplier.
 Moreover, their formula involves the normal cone of the critical cone
 of $\Gamma$, which has a workable expression only under the closedness of
 the radial cone to $\mathcal{N}_{\Gamma}$ (see Proposition \ref{critical-normal-prop1}
 and \ref{conditions-closedness}). In other words, under the uniqueness of
 the multipliers and the closedness of the radial cone to $\mathcal{N}_{\Gamma}$,
 their formula for the graphical derivative of $\mathcal{N}_{\Gamma}$ agrees
 with ours. As direct applications of this result, we establish a lower estimation
 for the regular coderivative of $\mathcal{N}_\Gamma$ under the SRCQ,
 and an upper estimation for the coderivative of $\widehat{\mathcal{N}}_\Gamma$
 under the metric subregularity of $\Phi$, which partly improves the results of
 \cite[Theorem 7]{Outrata11} and \cite[Theorem 4.1]{Mordu15}.

 \medskip

 Our notation is basically standard. A hollow capital, say $\mathbb{Z}$,
 denotes a finite dimensional vector space endowed with the inner product
 $\langle \cdot,\cdot\rangle$ and its induced norm $\|\cdot\|$, and
 $\mathbb{B}_{\mathbb{Z}}$ means the closed unit ball centered at
 the origin in $\mathbb{Z}$. For a given $z\in\mathbb{Z}$, $\mathbb{B}(z,\delta)$
 means the closed ball of radius $\delta$ centered at $z$ in $\mathbb{Z}$.
 For a given closed convex set $\Omega$, $\Pi_{\Omega}$ denotes the projection
 operator onto $\Omega$; and for a given nonempty convex cone $\mathcal{K}$,
 $\mathcal{K}^{\circ}$ means the negative polar of $\mathcal{K}$.
 For a linear operator $\mathcal{A}$, $\mathcal{A}^*$ denotes the adjoint of $\mathcal{A}$.
 For a given vector $z$, the notation $[\![z]\!]$ denotes the subspace generated by $z$.

 \section{Preliminaries}\label{sec2}

 This section provides some background knowledge and some necessary results.
 Let $\Omega\subseteq\mathbb{Z}$ be a nonempty set. For a fixed $\overline{z}\in\Omega$,
 from \cite{BS00} the radial cone to $\Omega$ at $\overline{z}$ is defined by
 \[
   \mathcal{R}_{\Omega}(\overline{z}):=\big\{h\in \mathbb{Z}\ |\ \exists\,t^*>0\ {\rm such\ that\ for\ all}\
   t\in [0,t^*],\,\overline{z}+th\in \Omega\big\},
 \]
 while from \cite{RW98} the contingent cone to $\Omega$ at $\overline{z}$ is defined by
 \[
   \mathcal{T}_{\Omega}(\overline{z}):=\big\{w\in\mathbb{Z}\ |\ \exists t_k\downarrow 0,\,
   w^k\to w\ {\rm with}\ \overline{x}+t_kw_k\in\Omega\big\}.
 \]
  Notice that $\mathcal{R}_{\Omega}(\overline{z})\subseteq \mathcal{T}_{\Omega}(\overline{z})$,
  and when $\Omega$ is convex, $\mathcal{T}_{\Omega}(\overline{z})={\rm cl}(\mathcal{R}_{\Omega}(\overline{z}))$.
 For a fixed $\overline{z}\in\Omega$, by \cite{RW98} the regular normal cone
 to $\Omega$ at $\overline{z}$ is defined by
 \[
   \widehat{\mathcal{N}}_{\Omega}(\overline{z})
   :=\Big\{v\in\mathbb{Z}\ |\ \limsup_{z\xrightarrow[\Omega]{}\overline{z}}
   \frac{\langle v,z-\overline{z}\rangle}{\|z-\overline{z}\|}\le 0\Big\},
 \]
 and the basic/limiting normal cone to $\Omega$ at $\overline{z}$ admits the following representation
 \[
   \mathcal{N}_\Omega(\overline{z})=\limsup_{z\xrightarrow[\Omega]{}\overline{z}}\widehat{\mathcal{N}}_{\Omega}(z),
 \]
 which, if $\Omega$ is locally closed at $\overline{z}\in\Omega$, is equivalent to
 the original definition by Mordukhovich \cite{Mordu76}, i.e.,
 \(
   \mathcal{N}_\Omega(\overline{z})\!:=\limsup_{z\to\overline{z}}\big[{\rm cone}(z-\Pi_{\Omega}(z))\big].
 \)
 Notice that $\widehat{\mathcal{N}}_{\Omega}(\overline{z})=(\mathcal{T}_{\Omega}(\overline{z}))^\circ$,
 and when $\Omega$ is convex, $\widehat{\mathcal{N}}_{\Omega}(\overline{z})=\mathcal{N}_{\Omega}(\overline{z})$.
 Given a direction $h\in\mathbb{Z}$, the directional limiting normal cone to $\Omega$
 at $\overline{x}$ in $h$ is defined by
 \(
   \mathcal{N}_{\Omega}(\overline{x};h)
   :=\limsup_{t\searrow 0,h'\to h}\widehat{\mathcal{N}}_{\Omega}(\overline{x}+th').
 \)
 Various properties of the directional limiting normal cone can be found
 in \cite{Gfrerer13,Gfrerer16}.
 \subsection{Lipschitz-type properties of a multifunction}\label{subsec2.1}

 Let $\mathcal{F}\!:\mathbb{Z}\rightrightarrows\mathbb{W}$ be a given multifunction.
 Consider an arbitrary $(\overline{z},\overline{w})\in{\rm gph}\mathcal{F}$
 such that $\mathcal{F}$ is locally closed at $(\overline{z},\overline{w})$.
 We recall from \cite{RW98,DR09} several Lipschitz-type properties of
 the multifunction $\mathcal{F}$,
 including the Aubin property, the calmness and the isolated calmness.
 \begin{definition}\label{Aubin-def}
  The multifunction $\mathcal{F}$ is said to have the Aubin property at
  $\overline{z}$ for $\overline{w}$ if there exists $\kappa\ge 0$ along with
  $\varepsilon>0$ and $\delta>0$ such that
  for all $z,z'\in\mathbb{B}(\overline{z},\varepsilon)$,
  \[
    \mathcal{F}(z)\cap\mathbb{B}(\overline{w},\delta)\subseteq\mathcal{F}(z')
    +\kappa\|z-z'\|\mathbb{B}_{\mathbb{W}}.
  \]
 \end{definition}
 \begin{definition}\label{calm-def}
  The multifunction $\mathcal{F}$ is said to be calm at $\overline{z}$ for $\overline{w}$
  if there exists $\kappa\ge0$ along with $\varepsilon>0$ and $\delta>0$ such that for all
  $z\in\mathbb{B}(\overline{z},\varepsilon)$,
  \begin{equation}\label{calm-inclusion1}
    \mathcal{F}(z)\cap\mathbb{B}(\overline{w},\delta)
    \subseteq\mathcal{F}(\overline{z})+\kappa\|z-\overline{z}\|\mathbb{B}_{\mathbb{W}};
  \end{equation}
  if in addition $\mathcal{F}(\overline{z})\cap \mathbb{B}(\overline{w},\delta) =\{\overline{w}\}$,
  $\mathcal{F}$ is said to be isolated calm at $\overline{z}$ for $\overline{w}$.
 \end{definition}

  The coderivative and graphical derivative of $\mathcal{F}$ are the convenient tools
  to study the Aubin property and isolated calmness of $\mathcal{F}$, respectively.
  Recall from \cite{Mordu80,Aubin81} that the coderivative of $\mathcal{F}$ at $\overline{z}$
  for $\overline{w}\in\mathcal{F}(\overline{z})$ is the mapping
  $D^*\mathcal{F}(\overline{z}|\overline{w})\!:\mathbb{W}\rightrightarrows\mathbb{Z}$
  defined by
  \[
    \Delta z\in D^*\mathcal{F}(\overline{z}|\overline{w})(\Delta w)\Longleftrightarrow
    (\Delta z,-\Delta w)\in\mathcal{N}_{{\rm gph}\mathcal{F}}(\overline{z},\overline{w}),
  \]
  and the graphical derivative of $\mathcal{F}$ at $(\overline{z},\overline{w})$
  is the mapping $D\mathcal{F}(\overline{z}|\overline{w})\!:\mathbb{Z}\rightrightarrows\mathbb{W}$
  defined by
  \[
    \Delta w\in D\mathcal{F}(\overline{z}|\overline{w})(\Delta z)\Longleftrightarrow
    (\Delta z,\Delta w)\in\mathcal{T}_{{\rm gph}\mathcal{F}}(\overline{z},\overline{w}).
  \]
  With the coderivative and graphical derivative of $\mathcal{F}$,
  we have the following conclusions.
  \begin{lemma}\label{chara-Aubin}(see \cite[Theorem 5.7]{Mordu93}
  or \cite[Theorem 9.40]{RW98})\ The multifunction $\mathcal{F}$ has
  the Aubin property at $\overline{z}$ for $\overline{w}$
  if and only if $D^*\mathcal{F}(\overline{z}|\overline{w})(0)=\{0\}$.
  \end{lemma}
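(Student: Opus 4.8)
The plan is to prove the two implications separately, reducing the equivalence to a kernel condition on the limiting normal cone of ${\rm gph}\mathcal{F}$. Throughout I rely on the fact that $\Delta z\in D^*\mathcal{F}(\overline{z}|\overline{w})(0)$ means precisely $(\Delta z,0)\in\mathcal{N}_{{\rm gph}\mathcal{F}}(\overline{z},\overline{w})$, so that the asserted condition $D^*\mathcal{F}(\overline{z}|\overline{w})(0)=\{0\}$ says that the only limiting normal to the graph whose $\mathbb{W}$-component vanishes is the trivial one.

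For the necessity direction I would argue by a direct estimate. Assume $\mathcal{F}$ has the Aubin property with constant $\kappa$ on neighborhoods of radii $\varepsilon,\delta$, and pick $\Delta z\in D^*\mathcal{F}(\overline{z}|\overline{w})(0)$. By definition of the limiting normal cone there exist $(z_k,w_k)\to(\overline{z},\overline{w})$ in ${\rm gph}\mathcal{F}$ and regular normals $(\Delta z_k,-\Delta w_k)\in\widehat{\mathcal{N}}_{{\rm gph}\mathcal{F}}(z_k,w_k)$ with $(\Delta z_k,-\Delta w_k)\to(\Delta z,0)$. The regular-normal inequality tests this pair against all graph points near $(z_k,w_k)$, and I would feed it the points supplied by the Aubin inclusion: perturbing $z_k$ slightly in the direction $\Delta z_k$ to some $z_k'$, the Aubin property furnishes a $w_k'\in\mathcal{F}(z_k')$ with $\|w_k'-w_k\|\le\kappa\|z_k'-z_k\|$. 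Substituting $(z_k',w_k')$ into the regular-normal inequality and letting the perturbation size shrink yields $\|\Delta z_k\|\le\kappa\|\Delta w_k\|$ for each $k$; passing to the limit $k\to\infty$, where $\Delta w_k\to 0$, forces $\Delta z=0$ and closes this direction.

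For the sufficiency direction I would argue by contraposition: assuming the Aubin property fails, I would manufacture a nonzero element of $D^*\mathcal{F}(\overline{z}|\overline{w})(0)$. Negating the inclusion (with $\kappa=k$ and $\varepsilon=\delta=1/k$) produces sequences $z_k,z_k'\to\overline{z}$ and $w_k\in\mathcal{F}(z_k)\cap\mathbb{B}(\overline{w},1/k)$ with ${\rm dist}(w_k,\mathcal{F}(z_k'))>k\|z_k-z_k'\|$, so the scalar function $(z,w)\mapsto\|w-w_k\|$ restricted to the graph cannot decrease fast enough near $(z_k,w_k)$. I would apply Ekeland's variational principle to a suitably penalized, graph-restricted minimization of this distance-type function to obtain nearly minimizing graph points at which an exact stationarity condition holds, and then invoke the limiting subdifferential sum rule (equivalently the extremal principle, valid since $\mathbb{X},\mathbb{Y}$ are finite dimensional) to extract regular normals $(\Delta z_k,-\Delta w_k)$ to ${\rm gph}\mathcal{F}$ with $\|\Delta w_k\|\to 0$ while $\|\Delta z_k\|$ stays bounded away from $0$. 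Taking limits and using the closedness of the limiting normal cone then delivers a nonzero $\Delta z\in D^*\mathcal{F}(\overline{z}|\overline{w})(0)$, contradicting the hypothesis.

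I expect the sufficiency direction to be the main obstacle. The necessity is a short direct estimate, whereas the converse requires the full variational machinery, and the delicate point is choosing the penalized objective and the Ekeland radius so that the approximate minimizers remain in the region where the graph-distance is governed purely by ${\rm gph}\mathcal{F}$, and so that after normalization the extracted normals have a nonvanishing first component but a vanishing second component. Keeping careful track of these quantitative estimates, so that the coderivative element obtained truly lies in the kernel $D^*\mathcal{F}(\overline{z}|\overline{w})(0)$ and is genuinely nonzero, is where the real work lies. As an alternative packaging, one could instead establish the exact modulus formula relating the Lipschitz modulus of $\mathcal{F}$ to the outer norm of $D^*\mathcal{F}(\overline{z}|\overline{w})$ and then use that a closed positively homogeneous mapping has finite outer norm precisely when its value at the origin reduces to $\{0\}$; this route isolates the same variational core but phrases the conclusion quantitatively.
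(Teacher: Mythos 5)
The paper does not prove this lemma at all: it is the classical Mordukhovich criterion, stated with a citation to Mordukhovich (Theorem 5.7 of the 1993 paper) and to Rockafellar--Wets (Theorem 9.40), so there is no in-paper argument to compare yours against. What you have written is a reconstruction of the standard literature proof, and its two halves deserve different verdicts. The necessity half is correct and essentially complete: testing the regular-normal inequality at $(z_k,w_k)$ against the graph points $(z_k',w_k')$ supplied by the Aubin inclusion, with $z_k'=z_k+t\Delta z_k$ and $\|w_k'-w_k\|\le\kappa t\|\Delta z_k\|$, gives $\|\Delta z_k\|\le\kappa\|\Delta w_k\|$ after dividing by $t$ and letting $t\downarrow 0$; this inequality passes to limiting normals and forces $\Delta z=0$ when $\Delta w=0$. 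Your closing remark about the outer-norm packaging is also accurate --- that is precisely how Theorem 9.40 of Rockafellar--Wets phrases the result, with the Lipschitz modulus equal to $\|D^*\mathcal{F}(\overline{z}|\overline{w})\|^{+}$.

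The sufficiency half, however, is a plan rather than a proof, and you say so yourself. Negating the Aubin property, choosing the penalized graph-restricted objective, fixing the Ekeland radius, and extracting regular normals $(\Delta z_k,-\Delta w_k)$ with $\|\Delta w_k\|\to 0$ while $\|\Delta z_k\|$ stays bounded away from zero is exactly the variational machinery used in the cited sources, so the strategy is the right one; but every quantitative step that makes it work --- the scaling that keeps the Ekeland minimizers inside the region where the penalty sees only $\mathrm{gph}\,\mathcal{F}$, the verification that the stationarity condition produces a normal whose $\mathbb{W}$-component is controlled by the failure rate $k$, and the normalization argument guaranteeing the limit is nonzero --- is named but not carried out. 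As a blind proof attempt this is therefore incomplete in its harder direction: the gap is not a wrong idea but an unexecuted one, and since the entire content of the criterion lives in that direction, the proposal as written does not yet establish the lemma; it reproduces the skeleton of the proof the paper delegates to the literature.
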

  \begin{lemma}\label{chara-icalm}(see \cite[Proposition 2.1]{KR92}
  or \cite[Proposition 4.1]{Levy96})\ The multifunction $\mathcal{F}$
  is isolated calm at $\overline{z}$ for $\overline{w}$
  if and only if $D\mathcal{F}(\overline{z}|\overline{w})(0)=\{0\}$.
  \end{lemma}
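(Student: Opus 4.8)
The plan is to establish the two implications separately after first translating both sides into sequential language, since the graphical derivative $D\mathcal{F}(\overline{z}|\overline{w})$ is defined through the contingent cone to ${\rm gph}\,\mathcal{F}$, which is itself a sequential object. As a preliminary reduction I would record that the isolated calmness of Definition \ref{calm-def} is equivalent to the existence of $\kappa\ge0$, $\varepsilon>0$ and $\delta>0$ such that
\[
  \mathcal{F}(z)\cap\mathbb{B}(\overline{w},\delta)\subseteq\overline{w}+\kappa\|z-\overline{z}\|\mathbb{B}_{\mathbb{W}}
  \qquad\text{for all }z\in\mathbb{B}(\overline{z},\varepsilon),
\]
that is, every $w\in\mathcal{F}(z)\cap\mathbb{B}(\overline{w},\delta)$ obeys $\|w-\overline{w}\|\le\kappa\|z-\overline{z}\|$. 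Indeed, combining the calmness inclusion \eqref{calm-inclusion1} with the isolation $\mathcal{F}(\overline{z})\cap\mathbb{B}(\overline{w},\delta)=\{\overline{w}\}$ yields this single inclusion after shrinking $\varepsilon$ and $\delta$ so that $\kappa\varepsilon+\delta$ stays below the original radius; conversely, putting $z=\overline{z}$ recovers both the isolation and the calmness. I will work with this equivalent form throughout.

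For necessity, assume the displayed inclusion and take any $\Delta w\in D\mathcal{F}(\overline{z}|\overline{w})(0)$, i.e. $(0,\Delta w)\in\mathcal{T}_{{\rm gph}\,\mathcal{F}}(\overline{z},\overline{w})$. Then there are $t_k\downarrow0$ and $(u^k,v^k)\to(0,\Delta w)$ with $\overline{w}+t_kv^k\in\mathcal{F}(\overline{z}+t_ku^k)$. For all large $k$ the points $\overline{z}+t_ku^k$ and $\overline{w}+t_kv^k$ lie in $\mathbb{B}(\overline{z},\varepsilon)$ and $\mathbb{B}(\overline{w},\delta)$ respectively, so the calmness estimate gives $t_k\|v^k\|\le\kappa t_k\|u^k\|$, hence $\|v^k\|\le\kappa\|u^k\|$. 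Passing to the limit forces $\|\Delta w\|\le\kappa\cdot0=0$, so $\Delta w=0$ and $D\mathcal{F}(\overline{z}|\overline{w})(0)=\{0\}$.

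For sufficiency I would argue by contraposition. If isolated calmness fails, the displayed inclusion fails for every choice of constants, and the choice $\kappa=\varepsilon^{-1}=\delta^{-1}=k$ produces $z^k\to\overline{z}$ and $w^k\in\mathcal{F}(z^k)$ with $w^k\to\overline{w}$ and $\|w^k-\overline{w}\|>k\|z^k-\overline{z}\|$. The decisive normalization is to set $t_k:=\|w^k-\overline{w}\|$, which is strictly positive (the strict inequality rules out $w^k=\overline{w}$) and tends to $0$, and then to put $v^k:=(w^k-\overline{w})/t_k$ and $u^k:=(z^k-\overline{z})/t_k$. By construction $\|v^k\|=1$, while $\|u^k\|=\|z^k-\overline{z}\|/\|w^k-\overline{w}\|<1/k\to0$, so $u^k\to0$; since $\mathbb{W}$ is finite dimensional, a subsequence of $\{v^k\}$ converges to some $\Delta w$ with $\|\Delta w\|=1$. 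As $\overline{z}+t_ku^k=z^k$ and $\overline{w}+t_kv^k=w^k\in\mathcal{F}(z^k)$, we obtain $(0,\Delta w)\in\mathcal{T}_{{\rm gph}\,\mathcal{F}}(\overline{z},\overline{w})$, i.e. a nonzero $\Delta w\in D\mathcal{F}(\overline{z}|\overline{w})(0)$, contradicting the hypothesis.

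The hard part will be the sufficiency direction, and specifically the choice to normalize the failure sequence by its $\mathbb{W}$-component $\|w^k-\overline{w}\|$ rather than by $\|z^k-\overline{z}\|$ or by an a priori parameter. This is exactly what keeps the limiting direction $\Delta w$ on the unit sphere (hence nonzero) while the quantitative gap $\|w^k-\overline{w}\|>k\|z^k-\overline{z}\|$ simultaneously drives the $\mathbb{Z}$-component $u^k$ to the origin; the finite dimensionality of $\mathbb{W}$ then supplies the convergent subsequence through compactness of the unit sphere. The necessity direction, by contrast, is a routine limit in the calmness estimate and should present no difficulty.
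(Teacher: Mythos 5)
Your proof is correct. The paper does not actually prove this lemma --- it is imported by citation from \cite{KR92} and \cite{Levy96} --- and your argument (the reformulation of isolated calmness as the single inclusion $\mathcal{F}(z)\cap\mathbb{B}(\overline{w},\delta)\subseteq\overline{w}+\kappa\|z-\overline{z}\|\mathbb{B}_{\mathbb{W}}$, the routine limit for necessity, and for sufficiency the normalization $t_k=\|w^k-\overline{w}\|$ combined with compactness of the unit sphere in the finite-dimensional space $\mathbb{W}$) is precisely the standard proof given in those references, and correctly so, with no gaps.
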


  Next we recall from \cite{RW98,DR09} metric regularity and metric subregularity,
  respectively.
 \begin{definition}\label{regular-def}
  The multifunction $\mathcal{F}$ is said to be metrically regular at $\overline{z}$
  for $\overline{w}$ if there exists $\kappa\ge 0$ along with $\varepsilon>0$ and $\delta>0$ such that
  for all $z\in\mathbb{B}(\overline{z},\varepsilon)$ and $w\in\mathbb{B}(\overline{w},\delta)$,
  \[
     {\rm dist}\big(z,\mathcal{F}^{-1}(w)\big)
     \le \kappa\,{\rm dist}\big(w,\mathcal{F}(z)\big).
  \]
 \end{definition}
 \begin{definition}\label{subregular-def}
  The multifunction $\mathcal{F}$ is said to be metrically subregular at $\overline{z}$
  for $\overline{w}$ if there exists $\kappa\ge0$ along with $\varepsilon>0$ and $\delta>0$
  such that for all $z\in\mathbb{B}(\overline{z},\varepsilon)$,
  \[
     {\rm dist}\big(z,\mathcal{F}^{-1}(\overline{w})\big)
     \le \kappa\,{\rm dist}\big(\overline{w},\mathcal{F}(z)\cap\mathbb{B}(\overline{w},\delta)\big).
  \]
 \end{definition}
 \begin{remark}\label{remark21}
  It is known that $\mathcal{F}$ has the Aubin property at $\overline{z}$ for $\overline{w}$
  iff $\mathcal{F}^{-1}$ is metrically regular at $\overline{w}$ for $\overline{z}$
  (see \cite{RW98,DR09}); and $\mathcal{F}$ is calm at $\overline{z}$ for $\overline{w}$
  iff $\mathcal{F}^{-1}$ is metrically subregular at $\overline{w}$ for $\overline{z}$
  (see \cite[Theorem 3H.3]{DR09}). By \cite[Exercise 3H.4]{DR09}, the restriction on
  $z\in\mathbb{B}(\overline{z},\varepsilon)$ in Definition \ref{calm-def} and
  the neighborhood $\mathbb{B}(\overline{w},\delta)$
  in Definition \ref{subregular-def} can be removed.
 \end{remark}

  The following lemma states a link between the graphical derivative
  of $\mathcal{F}$ and the contingent cone to the value of $\mathcal{F}$
  at some point, where the first part is easily proved by the definition,
  and the second part follows from \cite[Corollary 4.2]{Gfrerer16} and Remark \ref{remark21}.
 \begin{lemma}\label{TF-relation}
  For the multifunction $\mathcal{F}$ and the point $(\overline{z},\overline{w})$,
  $\mathcal{T}_{\mathcal{F}(\overline{z})}(\overline{w})\subseteq
  D\mathcal{F}(\overline{z}|\overline{w})(0)$. The converse inclusion also holds
  provided that $\mathcal{F}$ is calm at $\overline{z}$ for $\overline{w}$.
 \end{lemma}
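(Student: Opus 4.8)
The plan is to treat the two inclusions separately, since the first holds unconditionally straight from the definitions, while the reverse one uses calmness to correct for the perturbation appearing in the argument of $\mathcal{F}$.

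For the inclusion $\mathcal{T}_{\mathcal{F}(\overline{z})}(\overline{w})\subseteq D\mathcal{F}(\overline{z}|\overline{w})(0)$, I would simply unwind both definitions. Take any $\Delta w\in\mathcal{T}_{\mathcal{F}(\overline{z})}(\overline{w})$; by the contingent-cone definition there exist $t_k\downarrow 0$ and $w^k\to\Delta w$ with $\overline{w}+t_kw^k\in\mathcal{F}(\overline{z})$. Rewriting this membership as $(\overline{z},\overline{w})+t_k(0,w^k)\in{\rm gph}\,\mathcal{F}$ and noting $(0,w^k)\to(0,\Delta w)$, I have produced exactly a defining sequence for the contingent cone to ${\rm gph}\,\mathcal{F}$, so $(0,\Delta w)\in\mathcal{T}_{{\rm gph}\mathcal{F}}(\overline{z},\overline{w})$, i.e. $\Delta w\in D\mathcal{F}(\overline{z}|\overline{w})(0)$. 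This direction requires no hypothesis at all.

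For the reverse inclusion under calmness, I would start from $\Delta w\in D\mathcal{F}(\overline{z}|\overline{w})(0)$, i.e. $(0,\Delta w)\in\mathcal{T}_{{\rm gph}\mathcal{F}}(\overline{z},\overline{w})$, and extract $t_k\downarrow 0$ and $(u^k,w^k)\to(0,\Delta w)$ with $\overline{w}+t_kw^k\in\mathcal{F}(\overline{z}+t_ku^k)$. The obstacle is that these feasible points lie in the \emph{perturbed} sets $\mathcal{F}(\overline{z}+t_ku^k)$ rather than in $\mathcal{F}(\overline{z})$, so they cannot directly witness membership in $\mathcal{T}_{\mathcal{F}(\overline{z})}(\overline{w})$. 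Calmness is precisely the tool to close this gap: since $t_ku^k\to 0$ and $t_kw^k\to 0$, for all large $k$ one has $\overline{z}+t_ku^k\in\mathbb{B}(\overline{z},\varepsilon)$ and $\overline{w}+t_kw^k\in\mathbb{B}(\overline{w},\delta)$, so the calmness inclusion \eqref{calm-inclusion1} furnishes a point $y^k\in\mathcal{F}(\overline{z})$ with $\|\overline{w}+t_kw^k-y^k\|\le\kappa t_k\|u^k\|$.

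I would then set $\widetilde{w}^k:=(y^k-\overline{w})/t_k$, so that $\overline{w}+t_k\widetilde{w}^k=y^k\in\mathcal{F}(\overline{z})$ by construction. Dividing the calmness estimate by $t_k$ gives $\|w^k-\widetilde{w}^k\|\le\kappa\|u^k\|\to 0$, whence $\widetilde{w}^k\to\Delta w$; combined with $t_k\downarrow 0$ this is exactly a contingent-cone sequence certifying $\Delta w\in\mathcal{T}_{\mathcal{F}(\overline{z})}(\overline{w})$. The only point to watch is the bookkeeping that keeps the perturbation $\|u^k\|$ scaled by $t_k$ inside the calmness radius, which is automatic here since $u^k\to 0$ and $t_k\downarrow 0$. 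An alternative route would invoke \cite[Corollary 4.2]{Gfrerer16} through the calmness/metric-subregularity equivalence recorded in Remark \ref{remark21}, but the direct argument above is entirely self-contained.
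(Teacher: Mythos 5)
Your proof is correct, and for the nontrivial half it takes a genuinely different route from the paper. The paper disposes of Lemma \ref{TF-relation} by citation: the first inclusion is declared ``easily proved by the definition'' (which your unwinding of the two contingent cones makes explicit, exactly as intended), while the converse inclusion under calmness is obtained by invoking \cite[Corollary 4.2]{Gfrerer16} together with the equivalence, recorded in Remark \ref{remark21}, between calmness of $\mathcal{F}$ at $\overline{z}$ for $\overline{w}$ and metric subregularity of $\mathcal{F}^{-1}$ at $\overline{w}$ for $\overline{z}$. You instead prove that half directly: from $(0,\Delta w)\in\mathcal{T}_{{\rm gph}\mathcal{F}}(\overline{z},\overline{w})$ you extract the sequence $(u^k,w^k)\to(0,\Delta w)$, apply the calmness inclusion \eqref{calm-inclusion1} at $z=\overline{z}+t_ku^k$ (legitimate for large $k$ since $t_ku^k\to0$ and $t_kw^k\to0$ put you inside the calmness radii), and project the perturbed feasible point onto $\mathcal{F}(\overline{z})$ up to an error $\kappa t_k\|u^k\|$; rescaling by $t_k$ gives $\|\widetilde{w}^k-w^k\|\le\kappa\|u^k\|\to0$, which is precisely what kills the perturbation and certifies $\Delta w\in\mathcal{T}_{\mathcal{F}(\overline{z})}(\overline{w})$. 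Your argument buys self-containedness: it uses nothing beyond Definition \ref{calm-def} and the two cone definitions, and it even makes visible why calmness (rather than, say, mere closedness of the graph) is the right hypothesis --- the correction term is linear in the spatial perturbation $\|u^k\|$, which vanishes in the direction $\Delta z=0$. The paper's approach is shorter and slots the lemma into the general implicit-multifunction machinery of \cite{Gfrerer16}, at the cost of sending the reader to an external result whose statement is considerably heavier than the one-page argument you give. One stylistic remark: since the standing assumption of Subsection \ref{subsec2.1} already has $\mathcal{F}$ locally closed at $(\overline{z},\overline{w})$, you could note that your proof never uses this, so the lemma holds without it.
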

 \subsection{Normal cone mapping to $C^{2}$-cone reducible set}\label{subsec2.2}

  We shall establish the calmness of the normal cone map to
  a $C^{2}$-cone reducible closed convex set, which is a nonpolyhedral
  counterpart of the seminal upper-Lipschitzian result by Robinson \cite{Robinson81}
  for convex polyhedral sets. First, we recall the $C^{\ell}$-cone reducibility.
 \begin{definition}\label{cone-reduce}(\cite[Definition 3.135]{BS00})
  A closed convex set $\Omega$ in $\mathbb{Y}$ is said to be $C^{\ell}$-cone
  reducible at $\overline{y}\in\Omega$, if there exist an open neighborhood
  $\mathcal{Y}$ of $\overline{y}$, a pointed closed convex cone
  $\mathcal{D}\subseteq\mathbb{Z}$ and an $\ell$-times continuously
  differentiable mapping $\Xi\!: \mathcal{Y}\to\mathbb{Z}$ such that (i) $\Xi(\overline{y})=0$;
  (ii) $\Xi'(\overline{y})\!:\mathbb{Y}\to\mathbb{Z}$ is onto;
  (iii) $\Omega\cap\mathcal{Y}=\{y\in\mathcal{Y}\ |\  \Xi(y)\in\mathcal{D}\}$.
  We say that the closed convex set $\Omega$ is $C^\ell$-cone reducible
  if $\Omega$ is $C^\ell$-cone reducible at every $y\in\Omega$.
 \end{definition}
 \begin{theorem}\label{NK-calm}
  Let $\Omega\subseteq\mathbb{Y}$ be a closed convex set. Suppose that $\Omega$ is
  $C^{2}$-cone reducible at $\overline{y}\in \Omega$. Then, the normal cone mapping
  $\mathcal{N}_\Omega$ is calm at $\overline{y}$ for each $\overline{z}\in\mathcal{N}_\Omega(\overline{y})$.
 \end{theorem}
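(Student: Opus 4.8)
The plan is to exploit the $C^2$-cone reducibility to reduce the calmness of $\mathcal{N}_\Omega$ near $\overline{y}$ to an elementary estimate involving only the variation of the reduction mapping's derivative. By Definition \ref{cone-reduce} there are an open neighborhood $\mathcal{Y}$ of $\overline{y}$, a pointed closed convex cone $\mathcal{D}\subseteq\mathbb{Z}$, and a $C^2$ mapping $\Xi\!:\mathcal{Y}\to\mathbb{Z}$ with $\Xi(\overline{y})=0$, $\Xi'(\overline{y})$ onto, and $\Omega\cap\mathcal{Y}=\{y\in\mathcal{Y}\,|\,\Xi(y)\in\mathcal{D}\}$. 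Since surjectivity is an open condition and $\Xi'$ is continuous, I would first shrink $\mathcal{Y}$ to a ball $\mathbb{B}(\overline{y},\varepsilon)$ on which $\Xi'(y)$ stays onto with its smallest singular value bounded below by some $c>0$ (so that $\|\Xi'(y)^*w\|\ge c\|w\|$ for all $w$), and on which $\Xi'$ is Lipschitz with modulus $L$ (guaranteed by $\Xi\in C^2$). Because $\Xi'(y)$ is onto, Robinson's constraint qualification holds throughout this ball, so the chain rule for normal cones yields the local representation $\mathcal{N}_\Omega(y)=\Xi'(y)^*\mathcal{N}_{\mathcal{D}}(\Xi(y))$ for every $y\in\Omega\cap\mathbb{B}(\overline{y},\varepsilon)$.

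The key structural observation is that, since $\mathcal{D}$ is a convex cone, its normal cone mapping is monotone toward the apex: for each $d\in\mathcal{D}$ one has $\mathcal{N}_{\mathcal{D}}(d)=\{w\in\mathcal{D}^\circ\,|\,\langle w,d\rangle=0\}\subseteq\mathcal{D}^\circ=\mathcal{N}_{\mathcal{D}}(0)$. Hence every multiplier $w$ arising from $\mathcal{N}_{\mathcal{D}}(\Xi(y))$ already lies in $\mathcal{D}^\circ$, and therefore the ``cone part'' contributes nothing to the calmness error — all of the error will come from the variation of $\Xi'$ between $y$ and $\overline{y}$. This is precisely what makes the non-polyhedral case behave like Robinson's polyhedral result.

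With these ingredients the estimate is routine. Fix $\delta>0$ and take $y\in\mathbb{B}(\overline{y},\varepsilon)$ and $z\in\mathcal{N}_\Omega(y)\cap\mathbb{B}(\overline{z},\delta)$; the case $y\notin\Omega$ is vacuous since then $\mathcal{N}_\Omega(y)=\emptyset$, so I may assume $y\in\Omega$. Writing $z=\Xi'(y)^*w$ with $w\in\mathcal{N}_{\mathcal{D}}(\Xi(y))\subseteq\mathcal{D}^\circ$, the lower bound $\|\Xi'(y)^*w\|\ge c\|w\|$ gives $\|w\|\le\|z\|/c\le(\|\overline{z}\|+\delta)/c=:M$. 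Setting $z':=\Xi'(\overline{y})^*w$, I would observe that $z'\in\Xi'(\overline{y})^*\mathcal{D}^\circ=\Xi'(\overline{y})^*\mathcal{N}_{\mathcal{D}}(0)=\mathcal{N}_\Omega(\overline{y})$, while $\|z-z'\|=\|(\Xi'(y)^*-\Xi'(\overline{y})^*)w\|\le\|\Xi'(y)-\Xi'(\overline{y})\|\,\|w\|\le LM\|y-\overline{y}\|$. Thus $z\in\mathcal{N}_\Omega(\overline{y})+LM\|y-\overline{y}\|\mathbb{B}_{\mathbb{Y}}$, which is exactly the calmness inclusion \eqref{calm-inclusion1} with $\kappa:=LM$.

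The main obstacle — and essentially the only place where care is needed — lies in securing the uniform estimates and the chain rule rather than in the final computation: one must verify that the surjectivity of $\Xi'$ persists uniformly on a whole neighborhood (with a positive lower bound on the smallest singular value, so that each admissible $z$ determines a unique, uniformly bounded multiplier $w$), and that the normal-cone chain rule $\mathcal{N}_\Omega(y)=\Xi'(y)^*\mathcal{N}_{\mathcal{D}}(\Xi(y))$ is legitimately applicable at the nearby points $y$, not merely at $\overline{y}$. Both facts follow from the openness of surjectivity and the continuity of $\Xi'$, so I expect no genuine difficulty; the conceptual content is carried entirely by the monotonicity $\mathcal{N}_{\mathcal{D}}(d)\subseteq\mathcal{N}_{\mathcal{D}}(0)$, which collapses the cone's contribution and isolates the smooth perturbation of $\Xi'$ as the sole source of the Lipschitz error.
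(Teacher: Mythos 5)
Your proof is correct and follows essentially the same route as the paper's: both rest on the local reduction representation $\mathcal{N}_\Omega(y)=\nabla\Xi(y)\mathcal{N}_{\mathcal{D}}(\Xi(y))$, the key inclusion $\mathcal{N}_{\mathcal{D}}(\Xi(y))\subseteq\mathcal{D}^\circ=\mathcal{N}_{\mathcal{D}}(\Xi(\overline{y}))$ (valid since $\Xi(\overline{y})=0$ and $\mathcal{D}$ is a convex cone), and the Lipschitz continuity of $\Xi'$ to push the \emph{same} multiplier through $\nabla\Xi(\overline{y})$ into $\mathcal{N}_\Omega(\overline{y})$. The only cosmetic difference is how the multiplier is bounded: you use a uniform lower singular-value bound $\|\Xi'(y)^*w\|\ge c\|w\|$, whereas the paper bounds it via the Lipschitz continuity of the left inverse $\mathcal{E}(y)=(\Xi'(y)\nabla\Xi(y))^{-1}\Xi'(y)$; both are legitimate.
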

 \begin{proof}
  Since $\Omega$ is $C^{2}$-cone reducible at $\overline{y}\in \Omega$,
  there exist an open neighborhood $\mathcal{Y}$ of $\overline{y}$,
  a pointed closed convex cone $\mathcal{D}\subseteq\mathbb{Z}$,
  and a twice continuously differentiable $\Xi\!:\mathcal{Y}\to\mathbb{Z}$
  satisfying (i)-(iii) in Definition \ref{cone-reduce}.
  Since $\Xi'(\overline{y})\!:\mathbb{Y}\to\mathbb{Z}$ is onto,
  there exists $\varepsilon>0$ such that for each
  $y\in\mathbb{B}(\overline{y},\varepsilon)\subset \mathcal{Y}$,
  the mapping $\Xi'(y)\!:\mathbb{Y}\to\mathbb{Z}$ is onto.
  By \cite[Exercise 6.7]{RW98},
  \begin{equation}\label{ncone-Omega}
   \mathcal{N}_{\Omega}(y)=\nabla\Xi(y)\mathcal{N}_{\mathcal{D}}(\Xi(y))
   \quad\ \forall y\in\mathbb{B}(\overline{y},\varepsilon).
  \end{equation}
  Define $\mathcal{E}(y):=(\Xi'(y)\nabla\Xi(y))^{-1}\Xi'(y)$ for $y\in\mathcal{Y}$.
  Notice that the functions $\mathcal{E}(\cdot)$ and $\nabla\Xi(\cdot)$
  are continuously differentiable in $\mathcal{Y}$. There exist $\varepsilon'>0$,
  $L_{\mathcal{E}}>0$ and $L>0$ such that
  \begin{equation}\label{Lip-bound}
    \|\mathcal{E}(y)-\!\mathcal{E}(y')\|\le L_{\mathcal{E}}\|y-y'\|
    \ \ {\rm and}\ \
    \|\nabla\Xi(y)\!-\!\nabla\Xi(y')\|\le L\|y-y'\|\quad\forall y\in\mathbb{B}(\overline{y},\varepsilon').
  \end{equation}
  Now fix an arbitrary $\overline{z}\in\mathcal{N}_\Omega(\overline{y})$.
  In order to establish the calmness of $\mathcal{N}_\Omega$ at $\overline{y}$
  for $\overline{z}$, it suffices to argue that there exist $\overline{\varepsilon}>0$,
  $\overline{\delta}>0$ and $\overline{\kappa}>0$ such that
  for all $y\in\mathbb{B}(\overline{y},\overline{\varepsilon})$,
  \begin{equation}\label{aim-ineq1}
   \mathcal{N}_\Omega(y)\cap\mathbb{B}(\overline{z},\overline{\delta})
   \subseteq \mathcal{N}_\Omega(\overline{y})+\overline{\kappa}\|y-\overline{y}\|\mathbb{B}_{\mathbb{Y}}.
  \end{equation}
  Fix an arbitrary $\overline{\delta}\in(0,1)$ and set
  $\overline{\varepsilon}:=\frac{1}{2}\min\big\{\varepsilon,\varepsilon'\big\}$.
  Fix an arbitrary point $y\in\mathbb{B}(\overline{y},\overline{\varepsilon})$.
  If $\mathcal{N}_\Omega(y)\cap\mathbb{B}(\overline{z},\overline{\delta})=\emptyset$,
  the inclusion \eqref{aim-ineq1} automatically holds. So, we only need to consider
  the case where $\mathcal{N}_\Omega(y)\cap\mathbb{B}(\overline{z},\overline{\delta})\ne\emptyset$.
  Take an arbitrary $z\in \mathcal{N}_\Omega(y)\cap \mathbb{B}(\overline{z},\overline{\delta})$.
  From \eqref{ncone-Omega}, there exists $\xi\in \mathcal{N}_{\mathcal{D}}(\Xi(y))$
  such that $z=\nabla\Xi(y)\xi$. Since $\overline{z}\in\mathcal{N}_\Omega(\overline{y})$,
  there also exists $\overline{\xi}\in\mathcal{N}_{\mathcal{D}}(\Xi(\overline{y}))$
  such that $\overline{z}=\nabla\Xi(\overline{y})\overline{\xi}$.
  Clearly, $\xi=\mathcal{E}(y)z$ and $\overline{\xi}=\mathcal{E}(\overline{y})\overline{z}$. Then,
 \begin{align*}
   \|\xi-\overline{\xi}\|
   &=\|\mathcal{E}(y)z-\mathcal{E}(\overline{y})\overline{z}\|
   \le \|\mathcal{E}(y)z-\mathcal{E}(\overline{y})z\|+\|\mathcal{E}(\overline{y})z-\mathcal{E}(\overline{y})\overline{z}\|\nonumber\\
  &\le L_{\mathcal{E}}\|z\|\|y-\overline{y}\|+\|\mathcal{E}(\overline{y})\|\|z-\overline{z}\|
   \le L_{\mathcal{E}}(\|\overline{z}\|+\overline{\varepsilon})+\|\mathcal{E}(\overline{y})\|\overline{\delta}
   :=\widetilde{\delta}
 \end{align*}
  Since $\mathcal{D}\subseteq\mathbb{Z}$ is a pointed closed convex cone,
  we have $\mathcal{N}_{\mathcal{D}}(\Xi(y))\subseteq\mathcal{D}^{\circ}$
  and then $\xi\in\mathcal{D}^{\circ}$, which implies that $\nabla\Xi(\overline{y})\xi\in\nabla\Xi(\overline{y})\mathcal{N}_{\mathcal{D}}(\Xi(\overline{y}))
  =\mathcal{N}_\Omega(\overline{y})$. Thus,
  \begin{align*}
  {\rm dist}(z,\mathcal{N}_\Omega(\overline{y}))
  &={\rm dist}(\nabla\Xi(y)\xi,\mathcal{N}_\Omega(\overline{y}))
  \le\|\nabla\Xi(y)\xi-\nabla\Xi(\overline{y})\xi\|\\
  &\le\|\xi\|L\|y-\overline{y}\|
  \le L(\widetilde{\delta}+\|\overline{\xi}\|)\|y-\overline{y}\|.
  \end{align*}
  This shows that the inclusion \eqref{aim-ineq1} holds with
 $\overline{\kappa}=L(\widetilde{\delta}+\|\overline{\xi}\|)$.
 \end{proof}
 \begin{remark}\label{remark-NK}
  {\bf(a)} If $\Omega$ is a $C^{2}$-cone reducible closed convex cone with
  $\Omega^{\circ}=-\Omega$;
  for example, the positive semidefinite cone and Lorentz cone, then $\Omega^{\circ}$
  is a $C^{2}$-cone reducible closed convex cone. By Theorem \ref{NK-calm},
  the mapping $\mathcal{N}_{\Omega^{\circ}}$ is calm at each point of its graph. Along with
  $\mathcal{N}_{\Omega^{\circ}}=\mathcal{N}_{\Omega}^{-1}$, $\mathcal{N}_{\Omega}$
  is metrically subregular at each point of its graph. Thus, for
  a $C^{2}$-cone reducible closed convex cone $\Omega$ with $\Omega^{\circ}=-\Omega$,
  $\mathcal{N}_{\Omega}$ is both metrically subregular and calm at each point of its graph.
  This recovers the result of \cite[Proposition 3.3]{CuiST16}.

  \medskip
  \noindent
  {\bf(b)} When $\Omega$ is a closed nonconvex set, if there exists a closed cone
  $\mathcal{D}\subseteq\mathbb{Z}$ together with a twice continuously differentiable
  mapping $\Xi\!: \mathcal{Y}\to\mathbb{Z}$ such that (i) $\Xi(\overline{y})=0$;
  (ii) $\Xi'(\overline{y})\!:\mathbb{Y}\to\mathbb{Z}$ is onto;
  (iii) $\Omega\cap\mathcal{Y}=\{y\in\mathcal{Y}\ |\  \Xi(y)\in\mathcal{D}\}$,
  then from the proof of Theorem \ref{NK-calm} it follows that
  the regular normal cone mapping $\widehat{\mathcal{N}}_\Omega$ is calm at
  $\overline{y}$ for each $\overline{z}\in\widehat{\mathcal{N}}_\Omega(\overline{y})$..
 \end{remark}

  Now let $K$ be a closed convex set which is assumed to be $C^{2}$-cone reducible.
  By Theorem \ref{NK-calm}, its normal cone mapping $\mathcal{N}_K$ is calm at
  each $y\in K$ for $\lambda\in\mathcal{N}_K(y)$. From \cite[Proposition 3.136]{BS00},
  the set $K$ is also second-order regular at each $y\in K$,
  and hence $\mathcal{T}_{K}^{i,2}(y,h)=\mathcal{T}_{K}^{2}(y,h)$
  for any $h\in\mathbb{Y}$, where $\mathcal{T}_{K}^{i,2}(y,h)$
  and $\mathcal{T}_{K}^{2}(y,h)$ denote the inner and outer second order tangent sets
  to $K$ at $y$ in the direction $h$, respectively, defined by
  \begin{align*}
   \mathcal{T}_{K}^{i,2}(y,h):=\Big\{w\in \mathbb{Y}\;|\; {\rm dist}(y+th+\frac{1}{2}t^2w,K)=o(t^2), t\geq 0\Big\},\qquad\\
   \mathcal{T}_{K}^{2}(y,h):=\Big\{w\in \mathbb{Y}\;|\; \exists\;t_n\downarrow 0 {\;\rm such\;that\;} {\rm dist}(y+t_nh+\frac{1}{2}t_n^2 w,K)=o(t_n^2)\Big\}.
  \end{align*}
  From the standard reduction approach in \cite[Section 3.4.4]{BS00},
  we have the following result on the representation of the normal cone
  $\mathcal{N}_K$ and the ``sigma term'' of $K$.
  \begin{lemma}\label{lemma-reduction}
   Let $\overline{y}\in K$ be given. Then there exist an open neighborhood $\mathcal{Y}$
   of $\overline{y}$, a pointed closed convex cone $D\subseteq\mathbb{Z}$
   and a twice continuously differentiable mapping $\Xi\!:\mathcal{Y}\to\mathbb{Z}$
   satisfying conditions (i)-(iii) in Definition \ref{cone-reduce} such that
   for any $y\in\mathcal{Y}$,
   \begin{equation}\label{normal-cone}
     \mathcal{N}_{K}(y)=\nabla\Xi(y)\mathcal{N}_{D}(\Xi(y));
   \end{equation}
   and for any $\lambda\in\mathcal{N}_{K}(y)$ there exists a unique
   $u\in\!\mathcal{N}_{D}(\Xi(y))$ such that $\lambda=\nabla\Xi(y)u$ and
   \begin{equation}\label{Upsilon}
     \Upsilon(h):=-\sigma\big(\lambda,\mathcal{T}_{K}^2(y,h)\big)
     =\langle u,\Xi''(y)(h,h)\rangle
     \quad \forall h\in\mathcal{C}_{K}(y,\lambda)
   \end{equation}
  where $\sigma(\cdot,\mathcal{T}_{K}^2(y,h))$ is the support function
  of $\mathcal{T}_{K}^2(y,h)$, and for any $y\in K$,
  $\mathcal{C}_{K}(y,\lambda)$ is the critical cone of $K$ at $y$
  with respect to $\lambda\in\!\mathcal{N}_{K}(y)$, defined as
  \(
    \mathcal{C}_{K}(y,\lambda):=\mathcal{T}_{K}(y)\cap [\![\lambda]\!]^{\perp}.
  \)
  \end{lemma}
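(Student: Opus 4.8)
The plan is to realize this statement as a careful repackaging of the standard reduction machinery of \cite[Section 3.4.4]{BS00}, with the only genuinely new ingredient being the uniqueness of the reduced multiplier $u$. First I would invoke the $C^2$-cone reducibility of $K$ at $\overline{y}$ (Definition \ref{cone-reduce}) to obtain an open neighborhood $\mathcal{Y}$, a pointed closed convex cone $D\subseteq\mathbb{Z}$, and a twice continuously differentiable $\Xi\!:\mathcal{Y}\to\mathbb{Z}$ obeying (i)--(iii). Since $\Xi'(\overline{y})$ is onto, shrinking $\mathcal{Y}$ keeps $\Xi'(y)$ onto for every $y\in\mathcal{Y}$, exactly as in the opening of the proof of Theorem \ref{NK-calm}. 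The surjectivity of $\Xi'(y)$ is precisely the constraint qualification validating the normal-cone chain rule \cite[Exercise 6.7]{RW98}, which yields the representation \eqref{normal-cone} on $\mathcal{Y}$.

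For the existence and uniqueness of $u$ I would observe that $\nabla\Xi(y)=\Xi'(y)^{*}$, being the adjoint of an onto operator, is injective. Hence, given $\lambda\in\mathcal{N}_{K}(y)$, the factorization $\lambda=\nabla\Xi(y)u$ with $u\in\mathcal{N}_{D}(\Xi(y))$ supplied by \eqref{normal-cone} pins down $u$ uniquely; concretely $u=\mathcal{E}(y)\lambda$ with $\mathcal{E}(y)=(\Xi'(y)\nabla\Xi(y))^{-1}\Xi'(y)$, the operator already introduced in Theorem \ref{NK-calm}. This settles the first clause of the second assertion.

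The substantive part is the sigma-term identity \eqref{Upsilon}. Here I would apply the second-order tangent-set chain rule under the same surjectivity CQ (see \cite[Section 3.4.4]{BS00}) to obtain, for $h\in\mathcal{T}_{K}(y)$, the description $\mathcal{T}_{K}^{2}(y,h)=\{w\mid \Xi'(y)w+\Xi''(y)(h,h)\in\mathcal{T}_{D}^{2}(\Xi(y),\Xi'(y)h)\}$. Taking support functions, writing $\langle\lambda,w\rangle=\langle u,\Xi'(y)w\rangle$, and using that $\Xi'(y)$ is onto to let $\Xi'(y)w$ sweep all of $\mathbb{Z}$, the optimization collapses to $\sigma(\lambda,\mathcal{T}_{K}^{2}(y,h))=\sigma(u,\mathcal{T}_{D}^{2}(\Xi(y),\Xi'(y)h))-\langle u,\Xi''(y)(h,h)\rangle$. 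Thus \eqref{Upsilon} is equivalent to the vanishing of the reduced cone's sigma term, namely $\sigma(u,\mathcal{T}_{D}^{2}(\Xi(y),d))=0$ for $d:=\Xi'(y)h$; note that $h\in\mathcal{C}_{K}(y,\lambda)$ forces $\langle u,d\rangle=\langle\lambda,h\rangle=0$ and $d\in\mathcal{C}_{D}(\Xi(y),u)$.

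I expect this vanishing to be the main obstacle. The bound $\sigma(u,\mathcal{T}_{D}^{2}(\Xi(y),d))\le 0$ is routine: for any admissible $w$ with $\Xi(y)+t_{n}d+\tfrac12 t_{n}^{2}w_{n}\in D$, convexity of $D$ and $u\in\mathcal{N}_{D}(\Xi(y))$ give $\langle u,t_{n}d+\tfrac12 t_{n}^{2}w_{n}\rangle\le 0$, and dividing by $\tfrac12 t_{n}^{2}$ while using $\langle u,d\rangle=0$ yields $\langle u,w\rangle\le 0$. The reverse inequality reduces to $0\in\mathcal{T}_{D}^{2}(\Xi(y),d)$, and this is exactly where the cone structure is essential: at the reduction origin $\Xi(\overline{y})=0$ the tangent cone is $D$ itself, so a critical $d$ satisfies $\mathrm{dist}(t_{n}d,D)=o(t_{n}^{2})$ directly, giving $0\in\mathcal{T}_{D}^{2}(0,d)$ and hence equality to $0$. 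The delicate point I would have to handle is that for $y\ne\overline{y}$ the image $\Xi(y)$ is in general not the vertex of $D$; I would address this by exploiting that $K$ is $C^{2}$-cone reducible at every point, so that the identity may be read through the reduction based at $y$ (whose origin is $\Xi(y)=0$), together with the second-order regularity of $D$, which by \cite[Proposition 3.136]{BS00} is transmitted to $K$ and secures $\mathcal{T}_{K}^{i,2}(y,h)=\mathcal{T}_{K}^{2}(y,h)$. Reconciling the single neighborhood-wide reduction used for \eqref{normal-cone} with the pointwise vertex computation behind \eqref{Upsilon} is the step demanding the most care; the remainder is bookkeeping on top of \cite{BS00}.
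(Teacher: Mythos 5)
The paper offers no proof of this lemma at all --- it is stated as a direct citation of the reduction machinery in \cite[Section 3.4.4]{BS00} --- so the comparison is necessarily with that standard argument, which is the route you take. Your handling of \eqref{normal-cone} (shrink $\mathcal{Y}$ so that $\Xi'(y)$ stays onto, then apply \cite[Exercise 6.7]{RW98}), of the uniqueness of $u$ (injectivity of $\nabla\Xi(y)$ as the adjoint of a surjection, with $u=\mathcal{E}(y)\lambda$), and of the identity \eqref{Upsilon} at the base point $y=\overline{y}$ (second-order chain rule, support-function calculus, and the vertex fact $0\in\mathcal{T}_{D}^{2}(0,d)$) is correct and complete; this is exactly the content of \cite[Section 3.4.4]{BS00}.

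The genuine gap is the step you flag but do not close: the case $y\ne\overline{y}$. Your proposed fix --- ``read the identity through the reduction based at $y$'' --- cannot work, because the reduction at $y$ comes with a \emph{different} map $\Xi_{y}$ and cone $D_{y}$; it yields $-\sigma(\lambda,\mathcal{T}_{K}^{2}(y,h))=\langle u_{y},\Xi_{y}''(y)(h,h)\rangle$, an identity about $\Xi_{y}''$, not about the fixed $\Xi''$ in the lemma. By your own support-function computation, the statement with the fixed $\Xi$ forces $\sigma\big(u,\mathcal{T}_{D}^{2}(\Xi(y),\Xi'(y)h)\big)=0$, and at non-vertex points $\Xi(y)\ne 0$ of a non-polyhedral $D$ this simply fails. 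Indeed no choice of reduction can repair it: take $K=\mathbb{S}_{+}^{2}$ and $\overline{y}=0$. Since $\mathcal{T}_{K}(0)=\mathbb{S}_{+}^{2}$ is pointed, any reduction at $0$ has ${\rm Ker}\,\Xi'(0)=\{0\}$, so $\Xi'(y)$ is bijective near $0$, and both $u$ and $\Xi''(y)(h,h)$ remain bounded as $y\to 0$. But for $y_{\epsilon}=\epsilon\,{\rm Diag}(1,0)$, $\lambda={\rm Diag}(0,-1)\in\mathcal{N}_{K}(y_{\epsilon})$ and $h$ the off-diagonal all-ones matrix, one has $h\in\mathcal{C}_{K}(y_{\epsilon},\lambda)$ and $-\sigma(\lambda,\mathcal{T}_{K}^{2}(y_{\epsilon},h))=-2\langle\lambda,h\,y_{\epsilon}^{\dagger}h\rangle=2/\epsilon\to\infty$, which no bounded quantity $\langle u,\Xi''(y_{\epsilon})(h,h)\rangle$ can equal. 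So the neighborhood-wide version of \eqref{Upsilon} is not just unproved in your sketch, it is false; only the base-point case $y=\overline{y}$ (where $\Xi(\overline{y})=0$ is the vertex of $D$) is true, and that is the only case the paper ever uses (Lemma \ref{dir-proj}, Remark \ref{remark-critical}). In short, you have proved everything that is actually true and needed, and the step you could not finish reflects an overstatement in the lemma's ``for any $y\in\mathcal{Y}$'' scope rather than a missing idea on your side; a correct write-up should restrict \eqref{Upsilon} to $y=\overline{y}$ (or, equivalently, re-base the reduction at each point where \eqref{Upsilon} is invoked).
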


  Next we recall a useful result on the directional derivative of the projection
  operator $\Pi_{K}$. Fix an arbitrary $y\in\mathbb{Y}$. Write $\overline{y}:=\Pi_K(y)$
  and take $\overline{\lambda}\in\mathcal{N}_{K}(\overline{y})$.
  Since $K$ is second-order regular at $\overline{y}$, by \cite[Theorem 7.2]{BCS98}
  the mapping $\Pi_{K}$ is directionally differentiable at $y$ and the directional derivative
  $\Pi_{K}'(y;h)$ for any direction $h\in\mathbb{Y}$ satisfies
  \[
    \Pi_{K}'(y;h)=\mathop{\arg\min}_{d\in\mathcal{C}_{K}(\overline{y},\overline{\lambda})}
    \Big\{\|d-h\|^2-\sigma\big(\overline{\lambda},\mathcal{T}_{K}^2(\overline{y},d)\big)\Big\}.
  \]
  In addition, by following the arguments as those for \cite[Theorem 3.1]{WZZhang14},
  one can obtain
  \[
    \mathcal{T}_{{\rm gph}\mathcal{N}_K}(\overline{y},\overline{\lambda})=
    \big\{(\Delta z,\Delta w)\in\mathbb{Y}\times\mathbb{Y}\ |\
     \Pi_K'(\overline{y}+\overline{\lambda};\Delta z+\Delta w)=\Delta z\big\}.
  \]
  Combining this with \cite[Lemma 10]{DingSZ17}, we have the following
  conclusion for the graphical derivative of $\mathcal{N}_K$,
  the directional derivative of $\Pi_{K}$ and the critical cone of the set $K$.
  \begin{lemma}\label{dir-proj}
   Consider an arbitrary point pair $(\overline{y},\overline{\lambda})\in{\rm gph}\mathcal{N}_K$,
   and write $y:=\overline{y}+\overline{\lambda}$. Then,
   with $\Upsilon(\cdot)=-\sigma\big(\overline{\lambda},\mathcal{T}_{K}^2(\overline{y},\cdot)\big)
   =\langle u,\Xi''(\overline{y})(\cdot,\cdot)\rangle$ for $u\in\mathcal{N}_D(\Xi(\overline{y}))$,
   it holds that
   \begin{align}\label{system-regular}
    \Delta\lambda\in D\mathcal{N}_K(\overline{y}|\overline{\lambda})(\Delta y)
    &\Longleftrightarrow \Delta y-\Pi_{K}'(y;\Delta y\!+\!\Delta\lambda)=0\nonumber\\
    &\Longleftrightarrow
    \left\{\begin{array}{ll}
      \Delta y\in\mathcal{C}_{K}(\overline{y},\overline{\lambda}),\\
      \Delta\lambda-\frac{1}{2}\nabla\Upsilon(\Delta y)\in\big[\mathcal{C}_{K}(\overline{y},\overline{\lambda})\big]^{\circ},\\
      \langle\Delta y,\Delta\lambda\rangle
     =-\sigma(\overline{\lambda},\mathcal{T}_{K}^2(\overline{y},\Delta y)).
     \end{array}\right.
   \end{align}
 \end{lemma}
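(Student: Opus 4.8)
The plan is to treat the two equivalences separately, the first being essentially immediate and the second carrying the real content. For the first equivalence I would simply combine the displayed characterization of $\mathcal{T}_{{\rm gph}\mathcal{N}_K}(\overline{y},\overline{\lambda})$ recalled just above the lemma with the definition of the graphical derivative. Indeed, $\Delta\lambda\in D\mathcal{N}_K(\overline{y}|\overline{\lambda})(\Delta y)$ holds by definition iff $(\Delta y,\Delta\lambda)\in\mathcal{T}_{{\rm gph}\mathcal{N}_K}(\overline{y},\overline{\lambda})$; substituting $\Delta z=\Delta y$ and $\Delta w=\Delta\lambda$ into that characterization and recalling $y=\overline{y}+\overline{\lambda}$ yields precisely $\Delta y-\Pi_{K}'(y;\Delta y+\Delta\lambda)=0$. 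Nothing further is needed at this step.

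For the second equivalence I would exploit the formula for $\Pi_{K}'(y;\cdot)$ recalled above. Setting $h:=\Delta y+\Delta\lambda$, the equation $\Delta y=\Pi_{K}'(y;h)$ says exactly that $\Delta y$ is the unique minimizer over the critical cone $C:=\mathcal{C}_{K}(\overline{y},\overline{\lambda})$ of
\[
  \phi(d):=\|d-h\|^2-\sigma\big(\overline{\lambda},\mathcal{T}_{K}^2(\overline{y},d)\big)=\|d-h\|^2+\Upsilon(d),
\]
where on $C$ the sigma term coincides with the quadratic form $\Upsilon(d)=\langle u,\Xi''(\overline{y})(d,d)\rangle$ by Lemma \ref{lemma-reduction}. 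Since $C$ is a closed convex cone, a feasible $\Delta y\in C$ is the minimizer iff it satisfies the first-order condition $0\in\nabla\phi(\Delta y)+\mathcal{N}_{C}(\Delta y)$, equivalently $-\nabla\phi(\Delta y)\in\mathcal{N}_{C}(\Delta y)=[C]^{\circ}\cap[\![\Delta y]\!]^{\perp}$, using the standard description of the normal cone to a convex cone.

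It then remains to unpack this inclusion. Using $h=\Delta y+\Delta\lambda$ I compute $\nabla\phi(\Delta y)=2(\Delta y-h)+\nabla\Upsilon(\Delta y)=-2\Delta\lambda+\nabla\Upsilon(\Delta y)$, so that $-\nabla\phi(\Delta y)=2\big[\Delta\lambda-\tfrac12\nabla\Upsilon(\Delta y)\big]$. The membership $-\nabla\phi(\Delta y)\in[C]^{\circ}$ becomes, after dropping the positive scalar (as $[C]^{\circ}$ is a cone), exactly the second listed condition $\Delta\lambda-\tfrac12\nabla\Upsilon(\Delta y)\in[\mathcal{C}_{K}(\overline{y},\overline{\lambda})]^{\circ}$; the orthogonality $\langle-\nabla\phi(\Delta y),\Delta y\rangle=0$, together with Euler's relation $\langle\nabla\Upsilon(\Delta y),\Delta y\rangle=2\Upsilon(\Delta y)$ for the quadratic form $\Upsilon$, yields $\langle\Delta y,\Delta\lambda\rangle=\Upsilon(\Delta y)=-\sigma(\overline{\lambda},\mathcal{T}_{K}^2(\overline{y},\Delta y))$, the third condition; and feasibility $\Delta y\in C$ is the first condition. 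This matches the system in \eqref{system-regular} and completes the second equivalence.

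The main obstacle lies in rigorously justifying that the first-order condition is both necessary and sufficient for optimality, i.e.\ that the reduced problem $\min_{d\in C}\phi(d)$ is convex so that stationarity characterizes the global minimizer. This is where the $C^2$-cone reducibility is essential: by Lemma \ref{lemma-reduction} and the second-order regularity of $K$ (so that $\mathcal{T}_{K}^{i,2}=\mathcal{T}_{K}^{2}$), the sigma term is precisely the quadratic $\Upsilon$ and the curvature it contributes is controlled, making $\phi$ convex on $C$; the technical verification can be routed through \cite[Lemma 10]{DingSZ17}, which is combined with the formula for $\Pi_{K}'$. A secondary point to handle with care is the interpretation of $\nabla\Upsilon(\Delta y)$ as the full-space vector determined by $\langle\nabla\Upsilon(\Delta y),v\rangle=2\langle u,\Xi''(\overline{y})(\Delta y,v)\rangle$, together with the fact that, although $\Upsilon$ is identified with the sigma term only on $C$, this suffices, since both the objective and the stationarity condition are evaluated at points of $C$.
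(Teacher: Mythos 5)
Your first equivalence is handled exactly as the paper intends: the paper itself offers no written proof of this lemma, but simply combines the displayed formula for $\mathcal{T}_{{\rm gph}\mathcal{N}_K}(\overline{y},\overline{\lambda})$ (obtained ``by following the arguments as those for [Theorem 3.1, WZZhang14]'') with a citation of \cite[Lemma 10]{DingSZ17}, which is precisely the second equivalence. Your second equivalence instead attempts a self-contained derivation from the BCS98 formula $\Pi_K'(y;h)=\arg\min_{d\in\mathcal{C}_K(\overline{y},\overline{\lambda})}\{\|d-h\|^2-\sigma(\overline{\lambda},\mathcal{T}_K^2(\overline{y},d))\}$, and the algebra there is correct: with $h=\Delta y+\Delta\lambda$ one gets $-\nabla\phi(\Delta y)=2[\Delta\lambda-\frac12\nabla\Upsilon(\Delta y)]$, the normal cone to the convex cone $C=\mathcal{C}_K(\overline{y},\overline{\lambda})$ at $\Delta y$ is $[C]^{\circ}\cap[\![\Delta y]\!]^{\perp}$, and Euler's relation for the quadratic form $\Upsilon$ turns the orthogonality into the complementarity condition $\langle\Delta y,\Delta\lambda\rangle=\Upsilon(\Delta y)$. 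This is, in substance, how one proves the cited Lemma 10.

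The genuine gap is exactly the point you flag and then do not resolve: the equivalence between stationarity and global optimality. The forward direction (minimizer $\Rightarrow$ system) needs only first-order necessary conditions and is fine. The backward direction (system $\Rightarrow$ $\Delta y=\Pi_K'(y;\Delta y+\Delta\lambda)$) requires that the stationary point be the (unique) global minimizer, which fails for nonconvex problems; so everything hinges on the convexity of $\phi+\delta_C$. Your justification for this is ``the curvature it contributes is controlled,'' which is not an argument ($\Upsilon$ is a quadratic form with no sign structure on all of $\mathbb{Y}$ in general), and your fallback --- routing the verification through \cite[Lemma 10]{DingSZ17} --- is circular, since that lemma \emph{is} the equivalence you are trying to prove; invoking it reduces your proof to the paper's citation and makes the entire first-order analysis redundant. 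The convexity claim is in fact true, but it needs a real argument: the function $d\mapsto\delta_C(d)-\sigma(\overline{\lambda},\mathcal{T}_K^2(\overline{y},d))=\delta_C(d)+\Upsilon(d)$ is the second-order epi-derivative of the convex function $\delta_K$ at $\overline{y}$ for $\overline{\lambda}$ (this uses $C^2$-cone reducibility to guarantee twice epi-differentiability), and it is convex because each second-order difference quotient of a convex function is convex and epi-limits preserve convexity. Supplying that argument (or honestly citing \cite[Lemma 10]{DingSZ17} for the whole second equivalence, as the paper does) closes the gap; as written, the proof is incomplete at its decisive step.
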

 \subsection{Contingent and normal cones to a composite set}\label{subsec2.3}

  Consider a set $\Theta\!:=H^{-1}(\Delta)$ where $H\!:\mathbb{Z}\to\mathbb{W}$
  is a mapping and $\Delta\subseteq\mathbb{W}$ is a closed set.
  The following characterization holds for the contingent cone to the set $\Theta$.
 \begin{lemma}\label{Tcone-lemma}
  Suppose $H$ is Lipschitz near $\overline{z}$ and directionally differentiable
  at $\overline{z}$. Then,
  \begin{equation}\label{Tcone-formula}
  \mathcal{T}_{\Theta}\big(\overline{z}\big)
  \subseteq\big\{h\in\mathbb{Z}\ |\ H'(\overline{z};h)\in\mathcal{T}_{\Delta}(H(\overline{z}))\big\}.
  \end{equation}
  If $\mathcal{H}(z):=H(z)-\Delta$ is metrically subregular at $\overline{z}$ for $0$,
  the converse inclusion also holds.
 \end{lemma}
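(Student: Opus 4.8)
The plan is to establish the two inclusions separately: the stated inclusion \eqref{Tcone-formula} under the Lipschitz and directional differentiability hypotheses alone, and the converse under the additional metric subregularity of $\mathcal{H}$. For the forward inclusion I would take an arbitrary $h\in\mathcal{T}_{\Theta}(\overline{z})$ and pick sequences $t_k\downarrow 0$, $h^k\to h$ with $\overline{z}+t_kh^k\in\Theta$, i.e. $H(\overline{z}+t_kh^k)\in\Delta$. The crux is to show that the difference quotient $w^k:=t_k^{-1}[H(\overline{z}+t_kh^k)-H(\overline{z})]$ converges to $H'(\overline{z};h)$. I would split $w^k-H'(\overline{z};h)$ into $t_k^{-1}[H(\overline{z}+t_kh^k)-H(\overline{z}+t_kh)]$ plus $t_k^{-1}[H(\overline{z}+t_kh)-H(\overline{z})]-H'(\overline{z};h)$; the first piece is bounded in norm by $L\|h^k-h\|\to 0$ through the local Lipschitz property with modulus $L$ (valid for large $k$ since both arguments lie near $\overline{z}$), while the second tends to $0$ by directional differentiability. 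Since $H(\overline{z})+t_kw^k=H(\overline{z}+t_kh^k)\in\Delta$ and $w^k\to H'(\overline{z};h)$, the definition of the contingent cone yields $H'(\overline{z};h)\in\mathcal{T}_{\Delta}(H(\overline{z}))$.

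For the converse I would first recast the subregularity hypothesis. As $\mathcal{H}^{-1}(0)=\{z\ |\ H(z)\in\Delta\}=\Theta$ and ${\rm dist}(0,\mathcal{H}(z))={\rm dist}(H(z),\Delta)$, and since the ball restriction in Definition \ref{subregular-def} may be dropped by Remark \ref{remark21}, the metric subregularity of $\mathcal{H}$ at $\overline{z}$ for $0$ supplies constants $\kappa,\varepsilon>0$ with
\[
  {\rm dist}(z,\Theta)\le\kappa\,{\rm dist}(H(z),\Delta)\qquad\forall\,z\in\mathbb{B}(\overline{z},\varepsilon).
\]
Now fix $h$ with $H'(\overline{z};h)\in\mathcal{T}_{\Delta}(H(\overline{z}))$. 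Using the distance characterization of the contingent cone of $\Delta$, I would extract $t_k\downarrow 0$ with ${\rm dist}\big(H(\overline{z})+t_kH'(\overline{z};h),\Delta\big)=o(t_k)$. Setting $z_k:=\overline{z}+t_kh$, the expansion $H(z_k)=H(\overline{z})+t_kH'(\overline{z};h)+o(t_k)$ from directional differentiability and the triangle inequality give ${\rm dist}(H(z_k),\Delta)=o(t_k)$, so the displayed estimate forces ${\rm dist}(z_k,\Theta)=o(t_k)$.

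To finish, I would invoke the local closedness of $\Theta$ at $\overline{z}$ (immediate since $H$ is continuous and $\Delta$ is closed) to choose $\widetilde{z}_k\in\Theta$ with $\|z_k-\widetilde{z}_k\|={\rm dist}(z_k,\Theta)=o(t_k)$, and set $h^k:=t_k^{-1}(\widetilde{z}_k-\overline{z})$. Then $\overline{z}+t_kh^k=\widetilde{z}_k\in\Theta$ and $\|h^k-h\|=t_k^{-1}\|\widetilde{z}_k-z_k\|\to 0$, so $h\in\mathcal{T}_{\Theta}(\overline{z})$ by definition. The main obstacle is exactly this converse step: converting the merely subsequential smallness of ${\rm dist}(H(\overline{z})+t_kH'(\overline{z};h),\Delta)$ into an actual curve in $\Theta$ tangent to $h$. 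Metric subregularity is what makes this possible, since it lets one correct the infeasible points $z_k=\overline{z}+t_kh$ back onto $\Theta$ at the rate $o(t_k)$; without it the correction need not exist at this rate and the converse inclusion can fail.
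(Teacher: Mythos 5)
Your proof is correct and follows essentially the same route the paper intends: the forward inclusion via the Hadamard-type convergence of difference quotients (local Lipschitz continuity upgrading directional differentiability so that $t_k^{-1}[H(\overline{z}+t_kh^k)-H(\overline{z})]\to H'(\overline{z};h)$), and the converse via the error-bound form of metric subregularity used to project the infeasible points $\overline{z}+t_kh$ back onto $\Theta$ at the rate $o(t_k)$. The only difference is that the paper outsources both steps to citations (the definition plus Hadamard directional differentiability for the first part, and \cite[Proposition 1]{Henrion05} for the second), whereas you supply in full the standard arguments behind those citations, including the correct use of Remark \ref{remark21} to drop the ball restriction in Definition \ref{subregular-def}.
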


 The first part of Lemma \ref{Tcone-lemma} follows by the definition
 of the contingent cone and the Hadamard directional differentiability
 of $H$, and the second part is by \cite[Proposition 1]{Henrion05}.
 Combining Lemma \ref{Tcone-lemma} with \cite[Page 211-212]{Ioffe08},
 we can obtain the following result.
 \begin{lemma}\label{normal-cone-lemma}
  Consider an arbitrary $\overline{z}\in\Theta$. Let $\mathcal{H}$ be
  the multifunction defined in Lemma \ref{Tcone-lemma}. If $\mathcal{H}$
  is metrically subregular at $\overline{z}$ for $0$, then
  $\mathcal{N}_{\Theta}(\overline{z})\subseteq D^*H(\overline{z})\big[\mathcal{N}_\Delta(H(\overline{z}))\big]$.
  If in addition $H$ is strictly differentiable and
  $\widehat{\mathcal{N}}_\Delta(H(\overline{z}))=\mathcal{N}_\Delta(H(\overline{z}))$,
  then it holds that
  \[
    \widehat{\mathcal{N}}_{\Theta}(\overline{z})=\mathcal{N}_{\Theta}(\overline{z})
     =\big\{\nabla H(\overline{z})y\ |\ y\in\widehat{\mathcal{N}}_\Delta(H(\overline{z}))\big\}.
  \]
 \end{lemma}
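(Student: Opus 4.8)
The plan is to prove the two assertions in sequence, treating the coderivative inclusion as the only substantive step and then extracting the equality by elementary polarity once the extra smoothness and regularity hypotheses are in force. For the first assertion I would read $\Theta=\mathcal{H}^{-1}(0)$ and observe that the metric subregularity of $\mathcal{H}$ at $\overline{z}$ for $0$ is exactly the constraint qualification ${\rm dist}(z,\Theta)\le\kappa\,{\rm dist}(H(z),\Delta)$ for all $z$ near $\overline{z}$, since ${\rm dist}(0,\mathcal{H}(z))={\rm dist}(0,H(z)-\Delta)={\rm dist}(H(z),\Delta)$ and $\mathcal{H}^{-1}(0)=\Theta$. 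Under precisely this qualification, the coderivative pre-image (chain) rule of \cite{Ioffe08} (pp.~211--212), applied to the single-valued locally Lipschitz mapping $H$ and the closed set $\Delta$, yields directly $\mathcal{N}_{\Theta}(\overline{z})\subseteq D^*H(\overline{z})\big[\mathcal{N}_\Delta(H(\overline{z}))\big]$. This is the heart of the matter and the only place the subregularity hypothesis is genuinely used.

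For the second assertion I add the two remaining hypotheses. Since $H$ is strictly differentiable at $\overline{z}$, its coderivative is single-valued and reduces to the adjoint of the derivative, namely $D^*H(\overline{z})(y)=\{\nabla H(\overline{z})y\}$ for every $y\in\mathbb{W}$; substituting this into the first assertion and invoking the regularity identity $\widehat{\mathcal{N}}_\Delta(H(\overline{z}))=\mathcal{N}_\Delta(H(\overline{z}))$ gives $\mathcal{N}_{\Theta}(\overline{z})\subseteq\big\{\nabla H(\overline{z})y\ |\ y\in\widehat{\mathcal{N}}_\Delta(H(\overline{z}))\big\}$. Combined with the always-valid inclusion $\widehat{\mathcal{N}}_{\Theta}(\overline{z})\subseteq\mathcal{N}_{\Theta}(\overline{z})$, this sandwiches both $\mathcal{N}_{\Theta}(\overline{z})$ and $\widehat{\mathcal{N}}_{\Theta}(\overline{z})$ from above by the claimed image set.

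It then remains to establish the reverse inclusion $\big\{\nabla H(\overline{z})y\ |\ y\in\widehat{\mathcal{N}}_\Delta(H(\overline{z}))\big\}\subseteq\widehat{\mathcal{N}}_{\Theta}(\overline{z})$, which needs only the easy (always-valid) half of Lemma \ref{Tcone-lemma} together with the identity $\widehat{\mathcal{N}}_{\Theta}(\overline{z})=(\mathcal{T}_{\Theta}(\overline{z}))^{\circ}$. Given $y\in\widehat{\mathcal{N}}_\Delta(H(\overline{z}))=(\mathcal{T}_\Delta(H(\overline{z})))^{\circ}$, set $v:=\nabla H(\overline{z})y$. For any $h\in\mathcal{T}_{\Theta}(\overline{z})$, Lemma \ref{Tcone-lemma} gives $H'(\overline{z})h=H'(\overline{z};h)\in\mathcal{T}_\Delta(H(\overline{z}))$, whence $\langle v,h\rangle=\langle y,H'(\overline{z})h\rangle\le 0$; thus $v\in(\mathcal{T}_{\Theta}(\overline{z}))^{\circ}=\widehat{\mathcal{N}}_{\Theta}(\overline{z})$. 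Chaining this with the sandwich above forces the three sets to coincide.

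The main obstacle is the first assertion: everything after it is routine polar-cone bookkeeping, so the real content lies in confirming that metric subregularity of $\mathcal{H}$ supplies exactly the qualification Ioffe's coderivative rule demands. I would be careful to verify that $H$ being merely locally Lipschitz (as in Lemma \ref{Tcone-lemma}) still meets the standing hypotheses of that rule, and that the passage to the adjoint in the second assertion is legitimate under strict — not just Fr\'echet — differentiability, which is what guarantees single-valuedness of $D^*H(\overline{z})$.
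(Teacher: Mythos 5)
Your proposal is correct and takes essentially the same route the paper indicates in its one-line justification: the coderivative inclusion is obtained from the Ioffe--Outrata chain rule under the metric qualification condition (which, as you note, is exactly metric subregularity of $\mathcal{H}$ once the neighborhood restriction is removed as in Remark \ref{remark21}), and the equality under strict differentiability follows by combining that inclusion with the always-valid half of Lemma \ref{Tcone-lemma} and polarity of $\widehat{\mathcal{N}}_{\Theta}(\overline{z})=(\mathcal{T}_{\Theta}(\overline{z}))^{\circ}$. Your write-up simply fills in the details the paper leaves implicit.
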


  Recall that $\Gamma=g^{-1}(K)$ where the mapping $g$ and the closed convex set
  $K$ satisfy the standard assumption. By Lemma \ref{Tcone-lemma}-\ref{normal-cone-lemma},
  under the metric subregularity of $\mathcal{G}$, we have the following
  characterization for the contingent cone and normal cone to
  the set $\Gamma$.
 \begin{corollary}\label{TNcone-Gamma}
  Consider an arbitrary $\overline{x}\in\Gamma$. If the multifunction
  $\mathcal{G}$ defined by \eqref{MGmap} is metrically subregular at
  $\overline{x}$ for the origin,
  then it holds that
  \begin{align}{}\label{TGamma1}
   \mathcal{T}_{\Gamma}(\overline{x})
   =\!\big\{h\in\mathbb{X}\ |\ g'(\overline{x})h\in\mathcal{T}_{K}(g(\overline{x}))\big\},\quad\\
  \mathcal{N}_{\Gamma}(\overline{x})=\mathcal{\widehat{N}}_{\Gamma}(\overline{x})
   =\!\big\{\nabla g(\overline{x})\lambda\ |\ \lambda\in\mathcal{N}_{K}(g(\overline{x}))\big\}.
   \label{TGamma1}
  \end{align}
 \end{corollary}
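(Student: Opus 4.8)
The plan is to read Corollary \ref{TNcone-Gamma} as the direct specialization of Lemmas \ref{Tcone-lemma} and \ref{normal-cone-lemma} to the composite structure $\Gamma=g^{-1}(K)$, obtained by taking $H=g$, $\Delta=K$, and consequently $\mathcal{H}=\mathcal{G}$. Indeed, since $\mathcal{G}(x)=g(x)-K$ by \eqref{MGmap}, the multifunction $\mathcal{H}(z)=H(z)-\Delta$ appearing in those lemmas is precisely $\mathcal{G}$, so the standing hypothesis that $\mathcal{G}$ is metrically subregular at $\overline{x}$ for the origin supplies exactly the metric subregularity condition demanded in both lemmas. The entire substance of the proof therefore reduces to verifying that the specific data $g$ and $K$ satisfy the differentiability and normal-cone assumptions under which those lemmas furnish equalities rather than mere inclusions.

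For the contingent cone, I would invoke Lemma \ref{Tcone-lemma}. Because $g$ is twice continuously differentiable by the standing assumption, it is in particular locally Lipschitz near $\overline{x}$ and Hadamard directionally differentiable there, with $g'(\overline{x};h)=g'(\overline{x})h$ for every $h\in\mathbb{X}$. Hence the first (inclusion) part of Lemma \ref{Tcone-lemma} applies unconditionally, and its converse inclusion applies under the metric subregularity of $\mathcal{G}$, so that both directions combine to give
\[
  \mathcal{T}_{\Gamma}(\overline{x})
  =\big\{h\in\mathbb{X}\ |\ g'(\overline{x})h\in\mathcal{T}_{K}(g(\overline{x}))\big\}.
\]

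For the normal cone, I would apply Lemma \ref{normal-cone-lemma}. The $C^2$-smoothness of $g$ guarantees that $g$ is strictly differentiable at $\overline{x}$, so the coderivative collapses to the adjoint, namely $D^*g(\overline{x})(\lambda)=\{\nabla g(\overline{x})\lambda\}$; and since $K$ is closed and convex, its regular and limiting normal cones coincide, $\widehat{\mathcal{N}}_{K}(g(\overline{x}))=\mathcal{N}_{K}(g(\overline{x}))$, as recorded in Section \ref{sec2}. With the metric subregularity of $\mathcal{G}$ in force, the second part of Lemma \ref{normal-cone-lemma} then yields
\[
  \widehat{\mathcal{N}}_{\Gamma}(\overline{x})=\mathcal{N}_{\Gamma}(\overline{x})
  =\big\{\nabla g(\overline{x})\lambda\ |\ \lambda\in\mathcal{N}_{K}(g(\overline{x}))\big\}.
\]
I do not anticipate any genuine obstacle here, since the argument is purely a matter of checking hypotheses: the one point deserving care is the transfer of the abstract regularity requirements of Lemmas \ref{Tcone-lemma} and \ref{normal-cone-lemma} to the concrete pair $(g,K)$, namely that $C^2$-smoothness of $g$ delivers both the Hadamard directional differentiability needed for the contingent cone and the strict differentiability needed for the normal cone, while the convexity of $K$ supplies the equality of its regular and limiting normal cones. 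As all three facts are standard, the corollary follows at once.
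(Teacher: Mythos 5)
Your proposal is correct and matches the paper's own reasoning: the paper gives no separate proof for this corollary, presenting it exactly as the specialization of Lemma~\ref{Tcone-lemma} and Lemma~\ref{normal-cone-lemma} to $H=g$, $\Delta=K$, $\mathcal{H}=\mathcal{G}$, which is precisely your argument. Your verification of the hypotheses ($C^2$-smoothness giving Hadamard directional differentiability and strict differentiability of $g$, and convexity of $K$ giving $\widehat{\mathcal{N}}_{K}=\mathcal{N}_{K}$) is exactly the routine checking the paper leaves implicit.
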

 \begin{remark}\label{robust-remark}
  By Definition \ref{subregular-def}, the metric subregularity of $\mathcal{G}$ at
  $\overline{x}\in\Gamma$ for $0$ is equivalent to requiring the existence of $\kappa\ge0$
  along with $\varepsilon>0$ such that for all $x\in\mathbb{B}(\overline{x},\varepsilon)$,
  \[
    {\rm dist}(x,\Gamma)\le \kappa{\rm dist}(g(x),K).
  \]
  As remarked in \cite{HMordu17}, this means that the metric subregularity of $\mathcal{G}$
  at $\overline{x}\in\Gamma$ is robust in the sense that if $\mathcal{G}$ is metrically subregular
  at $\overline{x}\in\Gamma$, then so is $\mathcal{G}$ at any $x\in\Gamma$ near $\overline{x}$.
 \end{remark}
 \subsection{Multiplier set map and critical cone to $\Gamma$}\label{subsec2.4}

  Consider $\Gamma=g^{-1}(K)$ again. By Corollary \ref{TNcone-Gamma},
  under the metric subregularity of $\mathcal{G}$, $\mathcal{N}_{\Gamma}$
  takes the form of \eqref{TGamma1}. In view of this, for any given
  $x\in\Gamma$ and $v\in\mathcal{N}_{\Gamma}(x)$, we define
  \[
    \mathcal{M}(x,v):=\big\{\lambda\in\mathcal{N}_K(g(x))\ |\ v=\nabla g(x)\lambda\big\}
  \]
  which is the multiplier set associated to $(x,v)$, and denote by
  $\mathcal{M}_x\!:\mathbb{X}\rightrightarrows\mathbb{Y}$ the localized
  version of the multiplier set mapping $\mathcal{M}$, that is,
  $\mathcal{M}_x$ has the following form
  \begin{equation}\label{MMapx}
   \mathcal{M}_{x}(v):=\big\{\lambda\in\mathcal{N}_K(g(x))\ |\ v=\nabla g(x)\lambda\big\}.
  \end{equation}
  Clearly, $\mathcal{M}_{x}$ is a closed convex multifunction.
  For $\mathcal{M}_{x}$, we have the following result.
 \begin{proposition}\label{prop-Mx}
  Consider an arbitrary point $x\in\Gamma$. For any given $(v,\lambda)\in{\rm gph}\mathcal{M}_{x}$,
  \begin{equation}\label{Tcone-Mx}
   \mathcal{T}_{{\rm gph}\mathcal{M}_{x}}(v,\lambda)
   =\big\{(\xi,\eta)\in\mathbb{X}\times\mathbb{Y}\ |\ \xi=\nabla g(x)\eta,\,\eta\in\mathcal{T}_{\mathcal{N}_{K}(g(x))}(\lambda)\big\},
  \end{equation}
  and hence $\mathcal{M}_{x}$ is isolated calm at $v$ for $\lambda$ iff
  one of the following conditions holds:
  \begin{align*}
   \!{\rm Ker}(\nabla g(x))\cap \mathcal{T}_{\mathcal{N}_{K}(g(x))}(\lambda)=\{0\}
   &\Longleftrightarrow{\rm Ker}(\nabla g(x))\cap D\mathcal{N}_{K}(g(x)|\lambda)(0)=\{0\},\\
   &\Longleftrightarrow{\rm SRCQ\ for\ the\ system}\ g(x)\in K\ {\rm at}\ x\ {\rm w.r.t.}\ \lambda.\nonumber
   \end{align*}
  \end{proposition}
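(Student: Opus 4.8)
The plan is to exploit the special structure of $\mathrm{gph}\,\mathcal{M}_{x}$. Writing $A:=\nabla g(x)$ and $C:=\mathcal{N}_{K}(g(x))$, the graph is precisely the image of the closed convex cone $C$ under the injective linear map $L\colon\mathbb{Y}\to\mathbb{X}\times\mathbb{Y}$ given by $L(\eta):=(A\eta,\eta)$; that is, $\mathrm{gph}\,\mathcal{M}_{x}=L(C)$. Since $L$ is a linear homeomorphism onto its (closed) range---the second coordinate recovers $\eta$, so the inverse on that range is the Lipschitz coordinate projection---contingent cones are preserved: $\mathcal{T}_{L(C)}(L(\lambda))=L(\mathcal{T}_{C}(\lambda))$. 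I would check this directly from the definition of the contingent cone: if $L(\lambda)+t_{k}(\xi^{k},\eta^{k})\in L(C)$ with $t_{k}\downarrow 0$ and $(\xi^{k},\eta^{k})\to(\xi,\eta)$, then matching coordinates forces $\lambda+t_{k}\eta^{k}\in C$ and $\xi^{k}=A\eta^{k}$, whence $\eta\in\mathcal{T}_{C}(\lambda)$ and $\xi=A\eta$; conversely any $(A\eta,\eta)$ with $\eta\in\mathcal{T}_{C}(\lambda)$ lifts through $L$. This is exactly the asserted formula \eqref{Tcone-Mx}.

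For the isolated calmness, I would invoke Lemma~\ref{chara-icalm}, which applies since $\mathcal{M}_{x}$ has a closed graph: $\mathcal{M}_{x}$ is isolated calm at $v$ for $\lambda$ iff $D\mathcal{M}_{x}(v|\lambda)(0)=\{0\}$. By the definition of the graphical derivative together with \eqref{Tcone-Mx}, $(0,\eta)\in\mathcal{T}_{\mathrm{gph}\,\mathcal{M}_{x}}(v,\lambda)$ holds iff $A\eta=0$ and $\eta\in\mathcal{T}_{C}(\lambda)$, so that $D\mathcal{M}_{x}(v|\lambda)(0)=\mathrm{Ker}(A)\cap\mathcal{T}_{C}(\lambda)=\mathrm{Ker}(\nabla g(x))\cap\mathcal{T}_{\mathcal{N}_{K}(g(x))}(\lambda)$. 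Hence isolated calmness is equivalent to the first displayed condition.

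It remains to establish the two equivalences. The one involving $D\mathcal{N}_{K}(g(x)|\lambda)(0)$ follows from Lemma~\ref{TF-relation} applied to $\mathcal{F}=\mathcal{N}_{K}$ at $(g(x),\lambda)$: in general $\mathcal{T}_{\mathcal{N}_{K}(g(x))}(\lambda)\subseteq D\mathcal{N}_{K}(g(x)|\lambda)(0)$, with equality because $K$ is $C^{2}$-cone reducible, so $\mathcal{N}_{K}$ is calm at $g(x)$ for $\lambda$ by Theorem~\ref{NK-calm}. For the equivalence with the SRCQ I would pass to polars. Since $\mathcal{N}_{K}(g(x))=(\mathcal{T}_{K}(g(x)))^{\circ}$ is a closed convex cone containing $\lambda$, the tangent cone to a convex cone gives $\mathcal{T}_{\mathcal{N}_{K}(g(x))}(\lambda)=\mathrm{cl}\big(\mathcal{N}_{K}(g(x))+[\![\lambda]\!]\big)$; on the other hand, the polar-of-intersection rule gives $\big(\mathcal{C}_{K}(g(x),\lambda)\big)^{\circ}=\big(\mathcal{T}_{K}(g(x))\cap[\![\lambda]\!]^{\perp}\big)^{\circ}=\mathrm{cl}\big(\mathcal{N}_{K}(g(x))+[\![\lambda]\!]\big)$, so the two coincide: $\mathcal{T}_{\mathcal{N}_{K}(g(x))}(\lambda)=\big(\mathcal{C}_{K}(g(x),\lambda)\big)^{\circ}$. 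Combining $\mathrm{Ker}(\nabla g(x))=(g'(x)\mathbb{X})^{\perp}=(g'(x)\mathbb{X})^{\circ}$ with the identity $(P+Q)^{\circ}=P^{\circ}\cap Q^{\circ}$ for cones, the first condition turns into $\big(g'(x)\mathbb{X}+\mathcal{C}_{K}(g(x),\lambda)\big)^{\circ}=\{0\}$.

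The main obstacle is this last step, since a closed convex cone has polar $\{0\}$ exactly when it is dense; thus the polar condition only delivers $\mathrm{cl}\big(g'(x)\mathbb{X}+\mathcal{C}_{K}(g(x),\lambda)\big)=\mathbb{Y}$, while the SRCQ \eqref{SRCQ} requires equality \emph{without} the closure. I would bridge this gap with the finite-dimensional convexity fact that a convex set whose closure is all of $\mathbb{Y}$ has nonempty interior and therefore equals $\mathbb{Y}$ (its interior coincides with that of its closure). Applied to the convex set $g'(x)\mathbb{X}+\mathcal{C}_{K}(g(x),\lambda)$, this upgrades density to the exact equality $g'(x)\mathbb{X}+\mathcal{C}_{K}(g(x),\lambda)=\mathbb{Y}$, which is precisely the SRCQ at $x$ w.r.t.\ $\lambda$. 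The residual work is routine bookkeeping: confirming the closedness of $\mathrm{gph}\,\mathcal{M}_{x}$ so that Lemma~\ref{chara-icalm} applies, and that every cone entering the polarity identities is closed and convex.
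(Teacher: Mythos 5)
Your proposal is correct and follows essentially the same route as the paper's own proof: the same representation ${\rm gph}\,\mathcal{M}_{x}=L(\mathcal{N}_{K}(g(x)))$ with $L(\eta)=(\nabla g(x)\eta,\eta)$, the same reduction of isolated calmness via Lemma \ref{chara-icalm} to ${\rm Ker}(\nabla g(x))\cap \mathcal{T}_{\mathcal{N}_{K}(g(x))}(\lambda)=\{0\}$, and the same appeal to Theorem \ref{NK-calm} together with Lemma \ref{TF-relation} for the equivalence with $D\mathcal{N}_{K}(g(x)|\lambda)(0)$. The only difference is one of packaging: where the paper invokes \cite[Theorem 6.43]{RW98} (plus a closedness remark) for \eqref{Tcone-Mx} and \cite[Equations (2.31)--(2.32), Example 2.62]{BS00} for the SRCQ equivalence, you verify these directly --- the tangent-cone formula by matching coordinates under $L$, and the SRCQ equivalence by polar calculus together with the finite-dimensional fact that a convex set whose closure is all of $\mathbb{Y}$ must equal $\mathbb{Y}$ --- which is precisely the content of the cited results.
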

 \begin{proof}
  Notice that ${\rm gph}\mathcal{M}_{x}=\mathcal{L}\big(\mathcal{N}_{K}(g(x))\big)$
  where $\mathcal{L}(u):=\left(\begin{matrix}
          \nabla\!g(x)u\\ u
          \end{matrix}\right)$ for $u\in\mathbb{Y}$. From the convexity of
  $\mathcal{N}_{K}(g(x))$ and the last part of \cite[Theorem 6.43]{RW98}, it follows that
  \[
    \mathcal{T}_{{\rm gph}\mathcal{M}_{x}}(v,\lambda)
    ={\rm cl}\big\{(\xi,\eta)\in\mathbb{X}\times\mathbb{Y}\ |\ \xi=\nabla g(x)\eta,\,\eta\in\mathcal{T}_{\mathcal{N}_{K}(g(x))}(\lambda)\big\}.
  \]
  Since
  \(
    \big\{(\xi,\eta)\in\mathbb{X}\times\mathbb{Y}\ |\ \xi=\nabla g(x)\eta,\eta\in\mathcal{T}_{\mathcal{N}_{K}(g(x))}(\lambda)\big\}
  \)
  is closed, the result in \eqref{Tcone-Mx} holds.
  By Lemma \ref{chara-icalm}, $\mathcal{M}_{x}$ is isolated calm at $v$ for $\lambda$
  iff $(0,\eta)\in\mathcal{T}_{{\rm gph}\mathcal{M}_{x}}(v,\lambda)$ implies $\eta=0$.
  Together with \eqref{Tcone-Mx}, this is equivalent to requiring
  that ${\rm Ker}(\nabla g(x))\cap\mathcal{T}_{\mathcal{N}_{K}(g(x))}(\lambda)=\{0\}$.
  By Theorem \ref{NK-calm} and Lemma \ref{TF-relation}, the first equivalence holds.
  Recall that the SRCQ for the system $g(x)\in K$ at $x$ w.r.t. $\lambda$ is requiring that
  \(
    g'(x)\mathbb{X}+\mathcal{T}_K(g(x))\cap[\![\lambda]\!]^{\perp}=\mathbb{X},
  \)
  which by \cite[Equations(2.31)\&(2.32)]{BS00} and \cite[Example 2.62]{BS00}
  is equivalent to saying that
  \[
    0={\rm Ker}(\nabla g(x))\cap{\rm cl}(\mathcal{N}_K(g(x))
    \!+[\![\lambda]\!])={\rm Ker}(\nabla g(x))
    \cap\mathcal{T}_{\mathcal{N}_{K}(g(x))}(\lambda).
  \]
  Thus, we obtain the second equivalence. The proof is then completed.
 \end{proof}

  Given $x\in\Gamma$ and $v\in\mathcal{N}_{\Gamma}(x)$, the critical cone to $\Gamma$
  at $x$ with respect to $v$ is defined as
  \[
    \mathcal{C}_{\Gamma}(x,v):=\mathcal{T}_{\Gamma}(x)\cap[\![v]\!]^{\perp}.
  \]
  By Corollary \ref{TNcone-Gamma}, under the metric subregularity of
  $\mathcal{G}$ at $x\in\Gamma$ for $0$, it holds that
  \begin{equation}\label{critial-equa1}
    \mathcal{C}_{\Gamma}(x,v)=[g'(x)]^{-1}\mathcal{C}_K(g(x),\lambda)
    \quad{\rm for\ each}\ \lambda\in\mathcal{M}_{x}(v).
  \end{equation}
  Next we provide a characterization for the normal cone to the critical cone of $\Gamma$.
 \begin{proposition}\label{critical-normal-prop1}
  Let $(\overline{x},\overline{v})\in{\rm gph}\mathcal{N}_{\Gamma}$.
  If $\mathcal{G}$ is metrically subregular at $\overline{x}$ for $0$, then
  \begin{align}\label{critical-cone1}
   \mathcal{N}_{\mathcal{C}_{\Gamma}(\overline{x},\overline{v})}(d)
   &\supseteq{\textstyle\bigcup_{\lambda\in\mathcal{M}_{\overline{x}}(\overline{v})}}
    \Big\{\nabla g(\overline{x})\xi\ |\ \langle\xi,g'(\overline{x})d\rangle=0,\,
                 \xi\in\mathcal{T}_{\mathcal{N}_{K}(g(\overline{x}))}(\lambda)\Big\}\\
   &\supseteq{\textstyle\bigcup_{\lambda\in\mathcal{M}_{\overline{x}}(\overline{v})}}
    \Big\{\nabla g(\overline{x})\xi\ |\ \langle\xi,g'(\overline{x})d\rangle=0,\,
                 \xi\in\mathcal{R}_{\mathcal{N}_{K}(g(\overline{x}))}(\lambda)\Big\}.
   \label{critical-cone2}
  \end{align}
  If, in addition, the radial cone $\mathcal{R}_{\mathcal{N}_{\Gamma}(\overline{x})}(\overline{v})$
  is closed, the inclusions become equality.
  \end{proposition}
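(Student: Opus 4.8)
The plan is to reduce everything to conic polarity relations, exploiting that by \eqref{critial-equa1} the critical cone is the linear preimage $\mathcal{C}_{\Gamma}(\overline{x},\overline{v})=[g'(\overline{x})]^{-1}\mathcal{C}_K(g(\overline{x}),\lambda)$ for every $\lambda\in\mathcal{M}_{\overline{x}}(\overline{v})$. Throughout I abbreviate $\overline{y}:=g(\overline{x})$, $A:=g'(\overline{x})$ (so that $\nabla g(\overline{x})=A^*$) and $N:=\mathcal{N}_K(\overline{y})$, and I treat the meaningful case $d\in\mathcal{C}_{\Gamma}(\overline{x},\overline{v})$ (otherwise the normal cone is empty). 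Since $\mathcal{C}_K(\overline{y},\lambda)$ is a closed convex cone, so is $\mathcal{C}_{\Gamma}(\overline{x},\overline{v})=A^{-1}\mathcal{C}_K(\overline{y},\lambda)$, whence $\mathcal{N}_{\mathcal{C}_{\Gamma}(\overline{x},\overline{v})}(d)=[\mathcal{C}_{\Gamma}(\overline{x},\overline{v})]^{\circ}\cap[\![d]\!]^{\perp}$. I also record two elementary facts about the convex cone $N$ at $\lambda\in N$: the radial cone is $\mathcal{R}_{N}(\lambda)=N+[\![\lambda]\!]$, and the contingent cone is $\mathcal{T}_{N}(\lambda)={\rm cl}(\mathcal{R}_{N}(\lambda))=[\mathcal{C}_K(\overline{y},\lambda)]^{\circ}$, the last equality coming from the polar-of-intersection rule applied to $\mathcal{C}_K(\overline{y},\lambda)=\mathcal{T}_K(\overline{y})\cap[\![\lambda]\!]^{\perp}$ together with $\mathcal{T}_K(\overline{y})^{\circ}=N$.

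For the lower estimate \eqref{critical-cone1}, I fix $\lambda\in\mathcal{M}_{\overline{x}}(\overline{v})$ and $\xi\in\mathcal{T}_{N}(\lambda)$ with $\langle\xi,Ad\rangle=0$. Then $\xi\in[\mathcal{C}_K(\overline{y},\lambda)]^{\circ}$, so $A^*\xi\in A^*[\mathcal{C}_K(\overline{y},\lambda)]^{\circ}\subseteq\big(A^{-1}\mathcal{C}_K(\overline{y},\lambda)\big)^{\circ}=[\mathcal{C}_{\Gamma}(\overline{x},\overline{v})]^{\circ}$, where the inclusion $A^*C^{\circ}\subseteq(A^{-1}C)^{\circ}$ is immediate from $\langle A^*\xi,h\rangle=\langle\xi,Ah\rangle\le0$ for every $h\in A^{-1}C$. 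Since in addition $\langle A^*\xi,d\rangle=\langle\xi,Ad\rangle=0$, we get $A^*\xi\in[\mathcal{C}_{\Gamma}(\overline{x},\overline{v})]^{\circ}\cap[\![d]\!]^{\perp}=\mathcal{N}_{\mathcal{C}_{\Gamma}(\overline{x},\overline{v})}(d)$, which is \eqref{critical-cone1}. The inclusion of the right-hand side of \eqref{critical-cone2} into that of \eqref{critical-cone1} is then immediate from $\mathcal{R}_{N}(\lambda)\subseteq\mathcal{T}_{N}(\lambda)$. Note this part uses only the metric subregularity of $\mathcal{G}$.

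For the equality under the closedness of $\mathcal{R}_{\mathcal{N}_{\Gamma}(\overline{x})}(\overline{v})$, the key is the dual identity $[\mathcal{C}_{\Gamma}(\overline{x},\overline{v})]^{\circ}={\rm cl}\big(\mathcal{R}_{\mathcal{N}_{\Gamma}(\overline{x})}(\overline{v})\big)$. To see it, apply the polar-of-intersection rule to $\mathcal{C}_{\Gamma}(\overline{x},\overline{v})=\mathcal{T}_{\Gamma}(\overline{x})\cap[\![\overline{v}]\!]^{\perp}$, using $\mathcal{T}_{\Gamma}(\overline{x})^{\circ}=\widehat{\mathcal{N}}_{\Gamma}(\overline{x})=\mathcal{N}_{\Gamma}(\overline{x})$ from Corollary \ref{TNcone-Gamma}, to obtain $[\mathcal{C}_{\Gamma}(\overline{x},\overline{v})]^{\circ}={\rm cl}(\mathcal{N}_{\Gamma}(\overline{x})+[\![\overline{v}]\!])$; and recall $\mathcal{N}_{\Gamma}(\overline{x})+[\![\overline{v}]\!]=\mathcal{R}_{\mathcal{N}_{\Gamma}(\overline{x})}(\overline{v})$ by the radial-cone formula for the convex cone $\mathcal{N}_{\Gamma}(\overline{x})$. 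Under the hypothesis the closure is redundant, so $\mathcal{N}_{\mathcal{C}_{\Gamma}(\overline{x},\overline{v})}(d)=\big(\mathcal{N}_{\Gamma}(\overline{x})+[\![\overline{v}]\!]\big)\cap[\![d]\!]^{\perp}$. Finally I take any $w$ in this last set; writing $\mathcal{N}_{\Gamma}(\overline{x})=A^*N$ via Corollary \ref{TNcone-Gamma} I have $w=A^*\mu+t\,\overline{v}$ with $\mu\in N$ and $t\in\mathbb{R}$, and choosing $\lambda\in\mathcal{M}_{\overline{x}}(\overline{v})$ (nonempty since $\overline{v}\in A^*N$) so that $\overline{v}=A^*\lambda$, I get $w=A^*\xi$ with $\xi:=\mu+t\lambda\in N+[\![\lambda]\!]=\mathcal{R}_{N}(\lambda)$ and $\langle\xi,Ad\rangle=\langle w,d\rangle=0$. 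Hence $w$ lies in the right-hand side of \eqref{critical-cone2}, giving $\mathcal{N}_{\mathcal{C}_{\Gamma}(\overline{x},\overline{v})}(d)\subseteq$ \eqref{critical-cone2} and therefore equality of all three sets.

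The routine parts are the two lower estimates, which need only elementary conic polarity. The main obstacle is the equality: it rests on the dual identity $[\mathcal{C}_{\Gamma}(\overline{x},\overline{v})]^{\circ}={\rm cl}(\mathcal{R}_{\mathcal{N}_{\Gamma}(\overline{x})}(\overline{v}))$ and on correctly tracking when the closure may be dropped. Once the closure is removed by the hypothesis, the delicate bookkeeping is to decompose an arbitrary element of $\big(\mathcal{N}_{\Gamma}(\overline{x})+[\![\overline{v}]\!]\big)\cap[\![d]\!]^{\perp}$ into pre-adjoint form $A^*\xi$ with a multiplier $\xi\in\mathcal{R}_{N}(\lambda)$ for a suitable $\lambda\in\mathcal{M}_{\overline{x}}(\overline{v})$, while preserving the orthogonality $\langle\xi,Ad\rangle=0$ that places $A^*\xi$ in the right-hand side of \eqref{critical-cone2}.
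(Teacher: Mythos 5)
Your proof is correct, and its substantive half --- the equality under closedness of $\mathcal{R}_{\mathcal{N}_{\Gamma}(\overline{x})}(\overline{v})$ --- is essentially the paper's own argument: both pass through the chain $[\mathcal{C}_{\Gamma}(\overline{x},\overline{v})]^{\circ}={\rm cl}(\mathcal{N}_{\Gamma}(\overline{x})+[\![\overline{v}]\!])={\rm cl}\big(\mathcal{R}_{\mathcal{N}_{\Gamma}(\overline{x})}(\overline{v})\big)=\mathcal{N}_{\Gamma}(\overline{x})+[\![\overline{v}]\!]$, then decompose an element $w=\nabla g(\overline{x})\mu+t\overline{v}=\nabla g(\overline{x})(\mu+t\lambda)$ with $\mu+t\lambda\in\mathcal{R}_{\mathcal{N}_{K}(g(\overline{x}))}(\lambda)$ while tracking orthogonality to $d$; this matches the paper's $\overline{\xi}=\overline{\mu}+\overline{\alpha}\overline{\lambda}$ step verbatim. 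Where you genuinely diverge is the lower estimate \eqref{critical-cone1}: the paper argues sequentially from the definition of the contingent cone --- taking $t_k\downarrow 0$, $\xi^k\to\xi$ with $\lambda+t_k\xi^k\in\mathcal{N}_{K}(g(\overline{x}))$, showing each $\nabla g(\overline{x})\xi^k$ lies in $[\mathcal{C}_{\Gamma}(\overline{x},\overline{v})]^{\circ}$, and passing to the limit --- whereas you compress this into pure polar calculus, combining $\mathcal{T}_{\mathcal{N}_{K}(g(\overline{x}))}(\lambda)=[\mathcal{C}_{K}(g(\overline{x}),\lambda)]^{\circ}$, the preimage formula \eqref{critial-equa1}, and the elementary rule $A^{*}C^{\circ}\subseteq(A^{-1}C)^{\circ}$ for $A=g'(\overline{x})$. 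Both routes rest on the same convex-duality facts (\cite[Eqs.~(2.31)--(2.32), Example 2.62]{BS00}), so the difference is mainly one of packaging: your version avoids sequences and makes transparent that the whole proposition is conic polarity plus \eqref{critial-equa1}, while the paper's sequential argument needs neither \eqref{critial-equa1} nor the polar identity at the level of $K$, and so is marginally more self-contained. You are also a bit more careful than the paper in flagging the standing restriction $d\in\mathcal{C}_{\Gamma}(\overline{x},\overline{v})$ (without which the stated inclusions would fail, since the right-hand sides always contain the origin while the normal cone would be empty); the paper leaves this implicit via its identity \eqref{temp-equa1-sec2}.
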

  \begin{proof}
  Since $(\overline{x},\overline{v})\in{\rm gph}\mathcal{N}_{\Gamma}$ and
  $\mathcal{G}$ is metrically subregular at $\overline{x}$ for $0$,
  by Corollary \ref{TNcone-Gamma}, $\mathcal{M}_{\overline{x}}(\overline{v})\ne\emptyset$
  and $\mathcal{T}_{\Gamma}(\overline{x})$ is convex. The latter implies
  the convexity of $\mathcal{C}_{\Gamma}(\overline{x},\overline{v})$.
  Hence,
  \begin{equation}\label{temp-equa1-sec2}
   \mathcal{N}_{\mathcal{C}_{\Gamma}(\overline{x},\overline{v})}(d)
   =[\mathcal{C}_{\Gamma}(\overline{x},\overline{v})]^{\circ}\cap[\![d]\!]^{\perp}.
  \end{equation}
  The inclusion in \eqref{critical-cone2} is trivial, and
  we only need to establish the inclusion in \eqref{critical-cone1}.
  Let $h$ be an arbitrary point from the set on the right hand side of
  \eqref{critical-cone1}. Then there exist $\lambda\in\mathcal{M}_{\overline{x}}(\overline{v})$
  and $\xi\in\mathcal{T}_{\mathcal{N}_{K}(g(\overline{x}))}(\lambda)$ with
  $\langle\xi,g'(\overline{x})d\rangle=0$ such that $h=\nabla\!g(\overline{x})\xi$.
  From $\xi\in\mathcal{T}_{\mathcal{N}_{K}(g(\overline{x}))}(\lambda)$,
  there exist sequences $t_k\downarrow 0$ and $\xi^k\to\xi$ such that
  $\lambda+t_k\xi^k\in\mathcal{N}_K(g(\overline{x}))$ for each $k$.
  Fix an arbitrary $k\in\mathbb{N}$. For each $w\in\mathcal{C}_{\Gamma}(\overline{x},\overline{v})$,
  by \eqref{temp-equa1-sec2} it holds that
  \[
   0\ge\langle g'(\overline{x})w,\lambda+t_k\xi^k\rangle
   =\langle \overline{v},w\rangle+t_k\langle w,\nabla g(\overline{x})\xi^k\rangle
  =t_k\langle w,\nabla g(\overline{x})\xi^k\rangle,
  \]
  which implies that $\nabla g(\overline{x})\xi^k\in[\mathcal{C}_{\Gamma}(\overline{x},\overline{v})]^{\circ}$.
  Thus, $\nabla g(\overline{x})\xi\in[\mathcal{C}_{\Gamma}(\overline{x},\overline{v})]^{\circ}$.
  Together with $\langle\xi,g'(\overline{x})d\rangle=0$, we have
  $h=\nabla g(\overline{x})\xi\in[\mathcal{C}_{\Gamma}(\overline{x},\overline{v})]^{\circ}\cap[\![d]\!]^{\perp}$,
  and then $h\in\mathcal{N}_{\mathcal{C}_{\Gamma}(\overline{x},\overline{v})}(d)$ by
  \eqref{temp-equa1-sec2}. This shows that the set on the right hand side of \eqref{critical-cone1}
  is included in $\mathcal{N}_{\mathcal{C}_{\Gamma}(\overline{x},\overline{v})}(d)$.

  \medskip

  \medskip

  Assume that $\mathcal{R}_{\mathcal{N}_{\Gamma}(\overline{x})}(\overline{v})$
  is closed. To argue that the inclusions \eqref{critical-cone1} and \eqref{critical-cone2}
  become equality now, we only need to show that $\mathcal{N}_{\mathcal{C}_{\Gamma}(\overline{x},\overline{v})}(d)$
  is included in the set on the right hand hand side of \eqref{critical-cone2}.
  To this end, let $\overline{h}$ be an arbitrary point from $\mathcal{N}_{\mathcal{C}_{\Gamma}(\overline{x},\overline{v})}(d)$.
  Then
  \[
    [\mathcal{C}_{\Gamma}(\overline{x},\overline{v})]^{\circ}
    ={\rm cl}(\mathcal{N}_{\Gamma}(\overline{x})+[\![\overline{v}]\!])
    ={\rm cl}\big(\mathcal{R}_{\mathcal{N}_{\Gamma}(\overline{x})}(\overline{v})\big)
    =\mathcal{R}_{\mathcal{N}_{\Gamma}(\overline{x})}(\overline{v})
    =\mathcal{N}_{\Gamma}(\overline{x})+[\![\overline{v}]\!].
  \]
  where the first equality is by \cite[Equation (2.32)]{BS00},
  and the second is due to \cite[Example 2.62]{BS00}.
  Together with \eqref{temp-equa1-sec2},
  $\mathcal{N}_{\mathcal{C}_{\Gamma}(\overline{x},\overline{v})}(d)
    =(\mathcal{N}_{\Gamma}(\overline{x})\!+[\![\overline{v}]\!])\cap[\![d]\!]^{\perp}$.
  From $\overline{h}\in\mathcal{N}_{\mathcal{C}_{\Gamma}(\overline{x},\overline{v})}(d)$,
  there exist $\overline{\eta}\in\mathcal{N}_{\Gamma}(\overline{x})$ and
  $\overline{\alpha}\in\mathbb{R}$ such that
  $\overline{h}=\overline{\eta}+\overline{\alpha}\overline{v}$
  and $\langle\overline{\eta}+\overline{\alpha}\overline{v},d\rangle=0$. Since
  $\overline{v}\in\mathcal{N}_{\Gamma}(\overline{x})$ and
  $\overline{\eta}\in\mathcal{N}_{\Gamma}(\overline{x})$, by Corollary \ref{TNcone-Gamma},
  there exist $\overline{\lambda}\in\mathcal{N}_K(g(\overline{x}))$ and
  $\overline{\mu}\in\mathcal{N}_K(g(\overline{x}))$ such that
  $\overline{v}=\nabla g(\overline{x})\overline{\lambda}$ and
  $\overline{\eta}=\nabla g(\overline{x})\overline{\mu}$.
  Write $\overline{\xi}:=\overline{\mu}+\overline{\alpha}\overline{\lambda}$. Clearly,
  $\overline{\xi}\in\mathcal{R}_{\mathcal{N}_K(g(\overline{x}))}(\overline{\lambda})$.
  Also, from $\langle\overline{\eta}+\overline{\alpha}\overline{v},d\rangle=0$,
  we have $\langle g'(\overline{x})d,\overline{\xi}\rangle=0$.
  Together with $\overline{h}=\nabla g(\overline{x})\overline{\xi}$
  and $\overline{\lambda}\in\mathcal{M}_{\overline{x}}(\overline{v})$,
  we conclude that $\overline{h}$ belongs to the set on the right hand side of \eqref{critical-cone2}.
  Thus, $\mathcal{N}_{\mathcal{C}_{\Gamma}(\overline{x},\overline{v})}(d)$
  is included in the set on the right hand side of \eqref{critical-cone2}.
  The proof is completed.
  \end{proof}

  The sets on the right hand side of \eqref{critical-cone1} and
  \eqref{critical-cone2} are generally not closed. Proposition
  \ref{critical-normal-prop1} shows that their closedness is implied by
  that of $\mathcal{R}_{\mathcal{N}_{\Gamma}(\overline{x})}(\overline{v})$.
  A checkable condition for the latter is the strict complementarity which
  implies the calmness of $\mathcal{M}_{\overline{x}}$ by Proposition \ref{conditions-closedness}.
  Following \cite{BS00}, we say that the {\bf strict complementarity condition}
  holds for the system $g(x)\in K$ at
  $(\overline{x},\overline{v})\!\in{\rm gph}\mathcal{N}_{\Gamma}$
  if there is $\lambda\in {\rm ri}(\mathcal{N}_{K}(g(\overline{x})))$ such that
  $\overline{v}=\nabla g(\overline{x})\lambda$.
 \begin{proposition}\label{conditions-closedness}
  Let $(\overline{x},\overline{v})\in{\rm gph}\mathcal{N}_{\Gamma}$.
  Suppose $\mathcal{G}$ is metrically subregular at $\overline{x}$ for $0$.
  If the strict complementarity condition holds at $(\overline{x},\overline{v})$,
  then the radial cone $\mathcal{R}_{\mathcal{N}_{\Gamma}(\overline{x})}(\overline{v})$ is closed,
  and the multifunction $\mathcal{M}_{\overline{x}}$ is calm at $\overline{v}$
  for each $\lambda\in\mathcal{M}_{\overline{x}}(\overline{v})$.
 \end{proposition}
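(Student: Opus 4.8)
The plan is to recast everything in the linear--conic picture. Writing $A:=\nabla g(\overline{x})$ and $C:=\mathcal{N}_K(g(\overline{x}))$, the metric subregularity of $\mathcal{G}$ together with Corollary \ref{TNcone-Gamma} gives $\mathcal{N}_{\Gamma}(\overline{x})=A(C)$ (a closed convex cone, being the normal cone), while \eqref{MMapx} reads $\mathcal{M}_{\overline{x}}(v)=\{\lambda\in C\ |\ A\lambda=v\}$; hence $\mathcal{M}_{\overline{x}}(\overline{v})=C\cap L$ with $L:=\{\lambda\ |\ A\lambda=\overline{v}\}$. The strict complementarity condition supplies a Slater-type point $\lambda^0\in{\rm ri}(C)$ with $A\lambda^0=\overline{v}$, i.e. $\lambda^0\in{\rm ri}(C)\cap L$, and this single point drives both assertions.

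For the closedness of the radial cone I would first note that, since $A$ is linear and $\lambda^0\in{\rm ri}(C)$, the standard relative-interior rule for linear images, $A({\rm ri}\,C)={\rm ri}(A\,C)$, yields $\overline{v}=A\lambda^0\in{\rm ri}(\mathcal{N}_{\Gamma}(\overline{x}))$. For the convex set $\mathcal{N}_{\Gamma}(\overline{x})$ one has $\mathcal{R}_{\mathcal{N}_{\Gamma}(\overline{x})}(\overline{v})={\rm cone}(\mathcal{N}_{\Gamma}(\overline{x})-\overline{v})$, and at a relative-interior point this cone is exactly the linear subspace parallel to ${\rm aff}(\mathcal{N}_{\Gamma}(\overline{x}))$. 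As $\mathcal{N}_{\Gamma}(\overline{x})$ is a cone, this subspace is just ${\rm span}(\mathcal{N}_{\Gamma}(\overline{x}))$, which is closed in the finite dimensional space; this settles the first assertion.

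For the calmness of $\mathcal{M}_{\overline{x}}$ at $\overline{v}$ for an arbitrary $\lambda\in C\cap L$, I would unravel Definition \ref{calm-def} to the local error bound
\[
 {\rm dist}\big(\lambda',\,\mathcal{M}_{\overline{x}}(\overline{v})\big)\le\kappa\,\|A\lambda'-\overline{v}\|
 \quad\text{for all }\lambda'\in C\text{ near }\lambda,
\]
since any $\lambda'\in\mathcal{M}_{\overline{x}}(v)\cap\mathbb{B}(\lambda,\delta)$ satisfies $\|A\lambda'-\overline{v}\|=\|v-\overline{v}\|$. To produce a feasible nearby point I would set $S:={\rm span}(C)$, project $\lambda'$ orthogonally \emph{inside} $S$ onto the affine set $L\cap S=\lambda^0+({\rm Ker}(A)\cap S)$ to obtain $\mu_\flat$ with $\|\lambda'-\mu_\flat\|\le\sigma^{-1}\|A\lambda'-\overline{v}\|$, where $\sigma>0$ is the smallest positive singular value of $A|_S$. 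Now $\mu_\flat\in L$ but may lie outside $C$, so I would slide it toward $\lambda^0$: since $\lambda^0$ is interior to $C$ relative to $S$, there is $\rho>0$ with $\mathbb{B}(\lambda^0,\rho)\cap S\subseteq C$, whence a convexity argument gives $\mathbb{B}((1-\theta)\lambda'+\theta\lambda^0,\theta\rho)\cap S\subseteq C$ for $\theta\in[0,1]$. Choosing $\theta$ of order $\|\lambda'-\mu_\flat\|/\rho$ lets $\nu_\theta:=(1-\theta)\mu_\flat+\theta\lambda^0$ absorb the displacement $\mu_\flat-\lambda'$ while staying in $C$; as $\nu_\theta$ also stays in $L\cap S$, it lies in $C\cap L=\mathcal{M}_{\overline{x}}(\overline{v})$, and the triangle inequality gives $\|\lambda'-\nu_\theta\|\le\kappa\,\|A\lambda'-\overline{v}\|$, which is the claimed calmness.

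The relative-interior rule and the radial-cone identification are routine. The crux is the calmness estimate, and the obstacle is precisely that orthogonally projecting $\lambda'$ onto the affine set $L$ generically pushes it out of the cone $C$, because the projection direction lies in ${\rm range}(A^{*})$ rather than in the lineality of $C$. Confining the projection to the subspace $S={\rm span}(C)$ and using the relative-interior multiplier $\lambda^0$ as an anchor to drag the projected point back into $C$ with a displacement controlled by $\theta\rho$ is exactly what makes the error bound go through, and it does so without any polyhedrality of $K$.
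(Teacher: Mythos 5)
Your proof is correct, but it follows a genuinely different route from the paper's. The paper dispatches the closedness of $\mathcal{R}_{\mathcal{N}_{\Gamma}(\overline{x})}(\overline{v})$ by citing \cite[Proposition 2.1]{Gfrerer171}, whereas you prove it directly: under the metric subregularity of $\mathcal{G}$, Corollary \ref{TNcone-Gamma} gives $\mathcal{N}_{\Gamma}(\overline{x})=\nabla g(\overline{x})\mathcal{N}_K(g(\overline{x}))$, the relative-interior rule for linear images places $\overline{v}$ in ${\rm ri}(\mathcal{N}_{\Gamma}(\overline{x}))$, and the radial cone of a convex cone at a relative-interior point is the span of that cone, hence closed. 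For the calmness of $\mathcal{M}_{\overline{x}}$, the paper reformulates calmness as a metric qualification condition via \cite[Page 211-212]{Ioffe08} and then invokes the bounded linear regularity result \cite[Corollary 3]{Bauschke99} for the intersection of the convex cone $\mathcal{N}_K(g(\overline{x}))$ with the affine set $\{\lambda:\nabla g(\overline{x})\lambda=\overline{v}\}$, which applies precisely because strict complementarity provides a point of ${\rm ri}(\mathcal{N}_K(g(\overline{x})))$ in that affine set. You instead construct the required error bound by hand: a Hoffman-type estimate for the affine set within $S={\rm span}(\mathcal{N}_K(g(\overline{x})))$ (smallest positive singular value of $\nabla g(\overline{x})|_S$), followed by a retraction of the projected point toward the Slater point $\lambda^0$, with the convexity argument $\mathbb{B}((1-\theta)\lambda'+\theta\lambda^0,\theta\rho)\cap S\subseteq \mathcal{N}_K(g(\overline{x}))$ keeping the corrected point feasible and the displacement controlled by $\theta\rho$. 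In effect you reprove, in this special cone-meets-affine-set situation, exactly the linear regularity that the paper imports from \cite{Bauschke99}. What the paper's route buys is brevity and reliance on established results; what your route buys is a self-contained, elementary, and constructive argument with explicit constants, which also makes transparent why strict complementarity (and not polyhedrality of $K$) is the driving hypothesis in both conclusions. Two small points to make fully rigorous: state explicitly that the case $\|\lambda'-\mu_\flat\|>\rho$ is handled by taking $\theta=1$ (i.e.\ $\nu_\theta=\lambda^0$), where the same bound follows since $\rho<\|\lambda'-\mu_\flat\|$, and note that the constant in your final estimate depends on $\lambda$ through $\|\lambda-\lambda^0\|$, which is harmless because calmness is a pointwise property at each $\lambda\in\mathcal{M}_{\overline{x}}(\overline{v})$.
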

 \begin{proof}
  The first part follows by \cite[Proposition 2.1]{Gfrerer171}.
  We prove the second part.
  Notice that $\mathcal{M}_{\overline{x}}$
  can be rewritten as $\mathcal{M}_{\overline{x}}(v)
  =\{\lambda\in\mathcal{N}_{K}(g(\overline{x}))\ |\ \nabla\!g(\overline{x})\lambda=v-\overline{v}\}$
  for $v\in\mathbb{X}$. Define $\mathcal{F}(u):=\{\lambda\in\mathcal{N}_{K}(g(\overline{x}))\ |\ \nabla\!g(\overline{x})\lambda-u=0\}$ for $u\in\mathbb{X}$.
  Fix an arbitrary $\lambda\in\mathcal{M}_{\overline{x}}(\overline{v})$.
  It is easy to verify that $\mathcal{M}_{\overline{x}}$ is calm
  at $\overline{v}$ for $\lambda$ iff $\mathcal{F}$ is calm
  at the origin for $\lambda$. By \cite[Page 211-212]{Ioffe08},
  the latter is equivalent to the existence of $\delta,\gamma>0$
  such that for all $\lambda'\in\mathbb{B}(\lambda,\delta)$
  \[
    {\rm dist}\big(\lambda',\mathcal{M}_{\overline{x}}(\overline{v})\big)
    \le\gamma\max\big\{{\rm dist}(\lambda',\mathcal{N}_{K}(g(\overline{x}))),
    \|-\overline{v}+\nabla g(\overline{x})\lambda'\|\big\}.
  \]
  This metric qualification holds under the strict complementarity condition
  by the convexity of $\mathcal{N}_{K}(g(\overline{x}))$ and \cite[Corollary 3]{Bauschke99}.
 \end{proof}

 It is worthwhile to point out that the strict complementarity condition
 is not necessary for the closedness of the radial cone
 $\mathcal{R}_{\mathcal{N}_{\Gamma}(\overline{x})}(\overline{v})$;
 see the following example.
 \begin{example}\label{example1}
  Let $g(x,t):=\left(\begin{matrix}
                   {\rm Diag}(x)+tE+I\\ t
               \end{matrix}\right)$
  for $x\in\mathbb{R}^2$ and $t\in\mathbb{R}$,
  where $I$ is the $2\times 2$ identity matrix and
  $E$ is the $2\times 2$ matrix of all ones.
  Consider the constraint system $g(x,t)\in K:=\mathbb{S}^2_+\times \mathbb{R}_+$
  where $\mathbb{S}_{+}^2$ is the $2\times 2$ positive semidefinite matrix cone.
  Let
  \[
    \overline{x}=(-1,-1)^{\mathbb{T}},\ \overline{t}=0,\,\overline{\lambda}=0_{2\times 2},\,\overline{\tau}=0
    \ {\rm and}\ \overline{v}=((0,0)^{\mathbb{T}};0).
  \]
  Since $g(\overline{x},\overline{t})=(0_{2\times 2},0)$, clearly,
  $(\overline{x},\overline{t})\in g^{-1}(K):=\Gamma$
  and $\mathcal{N}_{K}(g(\overline{x},\overline{t}))=\mathbb{S}_{-}^2\times \mathbb{R}_-$.
  Since
  \begin{equation}\label{grad-gfun}
    \nabla g(\overline{x},\overline{t})(H,\omega)
    =\left(\begin{matrix}
           {\rm diag}(H)\\ \langle E,H\rangle+\omega
           \end{matrix}\right)\quad\forall (H,\omega)\in\mathbb{S}^2\times\mathbb{R},
  \end{equation}
  we have $\overline{v}=\nabla g(\overline{x},\overline{t})(\overline{\lambda},\overline{\tau})$,
  and then $\overline{v}\in\mathcal{N}_{\Gamma}(\overline{x},\overline{t})$.
  Since
  \(
    {\rm ri}(\mathcal{N}_{K}(g(\overline{x},\overline{t})))
    =\mathbb{S}_{--}^2\times \mathbb{R}_{--},
  \)
  There does not exist
  $(\lambda,\tau)\in{\rm ri}(\mathcal{N}_{K}(g(\overline{x},\overline{t})))$ such that
  $\nabla g(\overline{x},\overline{t})(\lambda,\tau)=\overline{v}$,
  but since $\overline{v}=((0,0)^{\mathbb{T}};0)$,
  the radial cone $\mathcal{R}_{\mathcal{N}_{\Gamma}(\overline{x},\overline{t})}(\overline{v})
  =\mathcal{N}_{\Gamma}(\overline{x},\overline{t})$ is closed.
 \end{example}

  Next we provide another characterization for the normal cone to the critical cone.
 \begin{proposition}\label{critical-normal-prop2}
  Let $(\overline{x},\overline{v})\in{\rm gph}\mathcal{N}_{\Gamma}$.
  Suppose $\mathcal{G}$ is metrically subregular at $\overline{x}$ for $0$.
  If $\mathcal{M}_{\overline{x}}$ is isolated calm at $\overline{v}$
  for some $\lambda\in\mathcal{M}_{\overline{x}}(\overline{v})$,
  then for any given $d\in\mathcal{C}_{\Gamma}(\overline{x},\overline{v})$,
  \begin{equation}\label{critical-cone3}
  \mathcal{N}_{\mathcal{C}_{\Gamma}(\overline{x},\overline{v})}(d)
  =\Big\{\nabla g(\overline{x})\xi\ |\ \langle\xi,g'(\overline{x})d\rangle=0,\,
        \xi\in\mathcal{T}_{\mathcal{N}_{K}(g(\overline{x}))}(\lambda)\Big\}.
  \end{equation}
 \end{proposition}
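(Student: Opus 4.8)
The plan is to prove the claimed identity by the two opposite inclusions, the inclusion ``$\supseteq$'' being essentially free and ``$\subseteq$'' carrying the substance. I would begin by observing that the isolated calmness of $\mathcal{M}_{\overline{x}}$ at $\overline{v}$ for $\lambda$, which by Proposition \ref{prop-Mx} is equivalent to ${\rm Ker}(\nabla g(\overline{x}))\cap\mathcal{T}_{\mathcal{N}_K(g(\overline{x}))}(\lambda)=\{0\}$, actually forces the multiplier to be unique, that is $\mathcal{M}_{\overline{x}}(\overline{v})=\{\lambda\}$. Indeed, any $\lambda'\in\mathcal{M}_{\overline{x}}(\overline{v})$ satisfies $\nabla g(\overline{x})(\lambda'-\lambda)=\overline{v}-\overline{v}=0$, and since $\lambda',\lambda\in\mathcal{N}_K(g(\overline{x}))$ with the latter a convex cone, the segment joining $\lambda$ to $\lambda'$ lies in $\mathcal{N}_K(g(\overline{x}))$, whence $\lambda'-\lambda\in\mathcal{R}_{\mathcal{N}_K(g(\overline{x}))}(\lambda)\subseteq\mathcal{T}_{\mathcal{N}_K(g(\overline{x}))}(\lambda)$; the kernel condition then gives $\lambda'=\lambda$. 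With this, the union in \eqref{critical-cone1} of Proposition \ref{critical-normal-prop1} collapses onto the single multiplier $\lambda$, and that inclusion is exactly the ``$\supseteq$'' half of \eqref{critical-cone3}.

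For the ``$\subseteq$'' half I would work from the identity $\mathcal{N}_{\mathcal{C}_{\Gamma}(\overline{x},\overline{v})}(d)=[\mathcal{C}_{\Gamma}(\overline{x},\overline{v})]^\circ\cap[\![d]\!]^\perp$ already isolated in the proof of Proposition \ref{critical-normal-prop1} (valid since metric subregularity of $\mathcal{G}$ makes $\mathcal{C}_{\Gamma}(\overline{x},\overline{v})$ a closed convex cone). The goal is then to show $[\mathcal{C}_{\Gamma}(\overline{x},\overline{v})]^\circ=\nabla g(\overline{x})\big(\mathcal{T}_{\mathcal{N}_K(g(\overline{x}))}(\lambda)\big)$. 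Using \eqref{critial-equa1}, $\mathcal{C}_{\Gamma}(\overline{x},\overline{v})=[g'(\overline{x})]^{-1}\mathcal{C}_K(g(\overline{x}),\lambda)$, so the polar calculus for inverse images of convex cones (from the bipolar theorem) yields $[\mathcal{C}_{\Gamma}(\overline{x},\overline{v})]^\circ={\rm cl}\big(\nabla g(\overline{x})[\mathcal{C}_K(g(\overline{x}),\lambda)]^\circ\big)$. Combining $\mathcal{C}_K(g(\overline{x}),\lambda)=\mathcal{T}_K(g(\overline{x}))\cap[\![\lambda]\!]^\perp$ with the polar-of-intersection rule and the identity ${\rm cl}(\mathcal{N}_K(g(\overline{x}))+[\![\lambda]\!])=\mathcal{T}_{\mathcal{N}_K(g(\overline{x}))}(\lambda)$ (the same one used in the proofs of Proposition \ref{prop-Mx} and Proposition \ref{critical-normal-prop1}, via \cite{BS00}) gives $[\mathcal{C}_K(g(\overline{x}),\lambda)]^\circ=\mathcal{T}_{\mathcal{N}_K(g(\overline{x}))}(\lambda)$, so $[\mathcal{C}_{\Gamma}(\overline{x},\overline{v})]^\circ={\rm cl}\big(\nabla g(\overline{x})\,\mathcal{T}_{\mathcal{N}_K(g(\overline{x}))}(\lambda)\big)$.

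The main obstacle is precisely to remove this closure, which is exactly where isolated calmness enters; indeed, as remarked after Proposition \ref{critical-normal-prop1}, the set on the right-hand side of \eqref{critical-cone3} is in general not closed without a qualification. Here I would invoke the standard fact that the image of a closed convex cone under a linear map is closed whenever the kernel of the map meets the cone only at the origin (proved by passing to a compact cross-section of the cone and taking the conical hull of its image). Since the hypothesis gives exactly ${\rm Ker}(\nabla g(\overline{x}))\cap\mathcal{T}_{\mathcal{N}_K(g(\overline{x}))}(\lambda)=\{0\}$ by Proposition \ref{prop-Mx}, the cone $\nabla g(\overline{x})\big(\mathcal{T}_{\mathcal{N}_K(g(\overline{x}))}(\lambda)\big)$ is already closed, the closure is superfluous, and $[\mathcal{C}_{\Gamma}(\overline{x},\overline{v})]^\circ=\nabla g(\overline{x})\big(\mathcal{T}_{\mathcal{N}_K(g(\overline{x}))}(\lambda)\big)$.

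It then only remains to intersect with $[\![d]\!]^\perp$. Since for $\xi\in\mathcal{T}_{\mathcal{N}_K(g(\overline{x}))}(\lambda)$ one has $\nabla g(\overline{x})\xi\in[\![d]\!]^\perp$ if and only if $\langle\nabla g(\overline{x})\xi,d\rangle=\langle\xi,g'(\overline{x})d\rangle=0$, intersecting $\nabla g(\overline{x})\big(\mathcal{T}_{\mathcal{N}_K(g(\overline{x}))}(\lambda)\big)$ with $[\![d]\!]^\perp$ reproduces exactly the right-hand side of \eqref{critical-cone3}. This settles the ``$\subseteq$'' inclusion and hence the desired equality.
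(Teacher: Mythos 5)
Your proposal is correct, and both of its ingredients check out: the ``$\supseteq$'' half is, as you say, just \eqref{critical-cone1} of Proposition \ref{critical-normal-prop1} (your observation that isolated calmness forces $\mathcal{M}_{\overline{x}}(\overline{v})=\{\lambda\}$ is also right, though not strictly needed for this inclusion), and your ``$\subseteq$'' half is sound because each step --- the polar rule $\big[[g'(\overline{x})]^{-1}\mathcal{C}_K(g(\overline{x}),\lambda)\big]^{\circ}={\rm cl}\big(\nabla g(\overline{x})[\mathcal{C}_K(g(\overline{x}),\lambda)]^{\circ}\big)$ via the bipolar theorem, the identity $[\mathcal{C}_K(g(\overline{x}),\lambda)]^{\circ}={\rm cl}(\mathcal{N}_K(g(\overline{x}))+[\![\lambda]\!])=\mathcal{T}_{\mathcal{N}_K(g(\overline{x}))}(\lambda)$, and the closed-image lemma for a closed cone meeting ${\rm Ker}(\nabla g(\overline{x}))$ only at the origin --- is valid, with the last hypothesis supplied exactly by Proposition \ref{prop-Mx}. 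Your route differs from the paper's in organization rather than in the underlying mechanism. The paper argues sequentially: it takes $h^*\in\mathcal{N}_{\mathcal{C}_\Gamma(\overline{x},\overline{v})}(d)$, approximates it via \eqref{temp-equa1-sec2} by $h^k\in\mathcal{N}_\Gamma(\overline{x})+[\![\overline{v}]\!]$, writes $h^k=\nabla g(\overline{x})(\lambda^k+\alpha_k\lambda)$ with $\lambda^k+\alpha_k\lambda$ in the radial cone $\mathcal{N}_K(g(\overline{x}))+[\![\lambda]\!]$, proves boundedness of these multipliers by a normalization argument that contradicts isolated calmness, and passes to the limit. You instead prove the closure-free structural identity $[\mathcal{C}_\Gamma(\overline{x},\overline{v})]^{\circ}=\nabla g(\overline{x})\big(\mathcal{T}_{\mathcal{N}_K(g(\overline{x}))}(\lambda)\big)$ first, packaging the very same compactness/normalization phenomenon into the standard closed-image lemma, and only then intersect with $[\![d]\!]^{\perp}$ (where, as you note, the representative $\xi$ of a given image point is immaterial because $\langle\nabla g(\overline{x})\xi,d\rangle=\langle\xi,g'(\overline{x})d\rangle$). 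What your version buys is modularity and a slightly stronger byproduct: the polar of the critical cone itself, not merely its slice by $[\![d]\!]^{\perp}$, is shown to be an exact (closed) linear image, which makes transparent why the right-hand side of \eqref{critical-cone3} is closed; what the paper's version buys is self-containedness --- it never needs the polar-of-inverse-image rule applied to \eqref{critial-equa1}, working instead directly with the representation of $\mathcal{N}_\Gamma(\overline{x})$ from Corollary \ref{TNcone-Gamma}.
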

 \begin{proof}
  By the first part of Proposition \ref{critical-normal-prop1},
  we only need to prove that $\mathcal{N}_{\mathcal{C}_{\Gamma}(\overline{x},\overline{v})}(d)$
  is included in the set on the right hand side of \eqref{critical-cone3}.
  Let $h^*$ be an arbitrary
  point from $\mathcal{N}_{\mathcal{C}_{\Gamma}(\overline{x},\overline{v})}(d)$.
  From \eqref{temp-equa1-sec2}, there exists a sequence
  $\{h^k\}\subseteq\mathcal{N}_{\Gamma}(\overline{x})+[\![\overline{v}]\!]$
  such that $h^k\to h^*$ with $\langle h^*,d\rangle=0$. By the expression of
  $\mathcal{N}_{\Gamma}(\overline{x})$, for each $k$ there exist
  $\lambda^k\in\mathcal{N}_{K}(g(\overline{x}))$ and $\alpha_k\in\mathbb{R}$
  such that $h^k=\nabla g(\overline{x})\lambda^k+\alpha_k\overline{v}$.
  Since $\lambda\in\mathcal{M}_{\overline{x}}(\overline{v})$,
  we have $\overline{v}=\nabla g(\overline{x})\lambda$,
  and $h^k=\nabla g(\overline{x})(\lambda^k\!+\alpha_k\lambda)$ for each $k$.
  Notice that $\{\lambda^k\!+\alpha_k\lambda\}$ is bounded.
  If not, by using
  \[
   \frac{h^k}{\|\lambda^k\!+\alpha_k\lambda\|}
   =\nabla g(\overline{x})\frac{\lambda^k\!+\alpha_k\lambda}{\|\lambda^k\!+\alpha_k\lambda\|}
   \ \ {\rm and}\ \
   \frac{\lambda^k\!+\alpha_k\lambda}{\|\lambda^k\!+\alpha_k\lambda\|}
   \in \mathcal{N}_K(g(\overline{x}))\!+[\![\lambda]\!]\subseteq
   \mathcal{T}_{\mathcal{N}_K(g(\overline{x}))}(\lambda),
  \]
  there exists $0\ne\overline{\mu}\in{\rm Ker}(\nabla g(\overline{x}))
  \cap\mathcal{T}_{\mathcal{N}_K(g(\overline{x}))}(\lambda)\ne\{0\}$.
  This, by Proposition \ref{prop-Mx}, contradicts the isolated calmness
  assumption of $\mathcal{M}_{\overline{x}}$ at $\overline{v}$ for $\lambda$.
  Now we assume (if necessary taking a subsequence) that
  $\lambda^k+\alpha_k\lambda\to\xi$. Clearly,
  $\xi\in\mathcal{T}_{\mathcal{N}_K(g(\overline{x}))}(\lambda)$
  and $h^*=\nabla g(\overline{x})\xi$. Together with $\langle h^*,d\rangle=0$,
  we have $\langle g'(\overline{x})d,\xi\rangle=0$. This shows that $h^*$ belongs to
  the set on the right hand side of \eqref{critical-cone3},
  and the claimed inclusion follows.
 \end{proof}
 \begin{remark}\label{remark-critical}
  Consider an arbitrary $(\overline{x},\overline{v})\!\in{\rm gph}\mathcal{N}_{\Gamma}$
  and an arbitrary $d\in\mathcal{C}_{\Gamma}(\overline{x},\overline{v})$.
  Under the assumption of Proposition \ref{critical-normal-prop2},
  by using Lemma \ref{dir-proj} and $[\mathcal{C}_K(g(\overline{x}),\lambda)]^{\circ}
  =\mathcal{T}_{\mathcal{N}_K(g(\overline{x}))}(\lambda)$,
  \begin{align*}
   \nabla g(\overline{x})\Big[D\mathcal{N}_{K}(g(\overline{x})|\lambda)(g'(\overline{x})d)
    - \frac{1}{2}\nabla\Upsilon(g'(\overline{x})d)\Big]
   \!=\mathcal{N}_{\mathcal{C}_{\Gamma}(\overline{x},\overline{v})}(d)
   \!=\nabla g(\overline{x})\mathcal{N}_{\mathcal{C}_{K}(g(\overline{x}),\lambda)}(g'(\overline{x})d)
  \end{align*}
  with $\Upsilon(\cdot)=-\sigma(\lambda,\mathcal{T}_{K}^2(g(\overline{x}),\cdot))$,
  where the last equality is using the following equivalence
  \[
    \xi\in\mathcal{N}_{\mathcal{C}_K(g(\overline{x}),\overline{\lambda})}(g'(\overline{x})d)
    \Longleftrightarrow \langle \xi,g'(\overline{x})d\rangle=0,
    \xi\in\mathcal{T}_{\mathcal{N}_K(g(\overline{x}))}(\overline{\lambda}).
  \]
 \end{remark}

  It is worthwhile to point out that there is no direct relation between
  the closedness of $\mathcal{R}_{\mathcal{N}_{\Gamma}(\overline{x})}(\overline{v})$
  and the isolated calmness of $\mathcal{M}_{\overline{x}}$; see Example \ref{example2} below.
  In addition, although the strict complementarity condition and
  the isolated calmness of $\mathcal{M}_{\overline{x}}$ imply
  the calmness of $\mathcal{M}_{\overline{x}}$, there is no direct relation
  between them; see Example \ref{example3} below.
 \begin{example}\label{example2}
  Consider the constraint system in Example \ref{example1}.
  Let $(\overline{x},\overline{t})$ and $(\overline{\lambda},\overline{\tau})$
  be same as Example \ref{example1}. Firstly, by using \eqref{grad-gfun} and noting that $\mathcal{T}_{\mathcal{N}_K(g(\overline{x},\overline{t}))}(\overline{\lambda},\overline{\tau})
  =\mathbb{S}_{-}^2\times \mathbb{R}_{-}$, it is not hard to check that
  \(
    {\rm Ker}(\nabla g(\overline{x},\overline{t}))\cap\mathcal{T}_{\mathcal{N}_K(g(\overline{x},\overline{t}))}(\overline{\lambda},\overline{\tau})
    =\{(0_{2\times 2},0)\}.
  \)
  By Proposition \ref{prop-Mx}, the multifunction
  $\mathcal{M}_{(\overline{x},\overline{t})}$ is isolated calm
  at $\overline{v}=((0,0)^{\mathbb{T}};0)$ for $(\overline{\lambda},\overline{\tau})$.

  \medskip

  Next we consider $(\widehat{\lambda},\overline{\tau})$ with
  $\widehat{\lambda}=\left[\begin{matrix}
              -1 & 0\\ 0 & 0
   \end{matrix}\right]\in\mathbb{S}_{-}^2$.
  By using \eqref{grad-gfun}, we calculate that
  \[
   \widehat{v}=\nabla g(\overline{x},\overline{t})(\widehat{\lambda},\overline{\tau})
  =((-1,0)^{\mathbb{T}};-1).
  \]
  Since
  \(
    \mathcal{T}_{\mathcal{N}_K(g(\overline{x},\overline{t}))}(\widehat{\lambda},\overline{\tau})
    =\mathcal{T}_{\mathbb{S}_{-}^2}(\widehat{\lambda})\times\mathcal{T}_{\mathbb{R}_{-}}(\overline{\tau})
    =\{H\in\mathbb{S}^2\ |\ H_{22}\le 0\}\times\mathbb{R}_{-},
  \)
  it follows that
  \[
    {\rm Ker}(\nabla g(\overline{x},\overline{t}))\cap\mathcal{T}_{\mathcal{N}_K(g(\overline{x},\overline{t}))}(\overline{Y},\overline{s})
    \ne \{(0_{2\times 2},0)\}.
  \]
  By Proposition \ref{prop-Mx}, the mapping $\mathcal{M}_{(\overline{x},\overline{t})}$
  is not isolated calm at $\widehat{v}$ for $(\widehat{\lambda},\overline{\tau})$.
  Notice that
  \begin{align*}
    \mathcal{R}_{\mathcal{N}_{\Gamma}(\overline{x},\overline{t})}(\overline{v})
    &=\nabla g(\overline{x},\overline{t})\big(\mathcal{N}_K(g(\overline{x},\overline{t}))+[\![(\widehat{\lambda},\overline{\tau})]\!]\big)\\
    &=\bigg\{\left(\begin{matrix}
                   {\rm diag}(Y+a\widehat{\lambda})\\  \langle E, Y\rangle +\tau
               \end{matrix}\right)\;|\; Y\in \mathbb{S}^2_-,\,\tau\in\mathbb{R}_{-},\,a\in \mathbb{R}\bigg\}.
  \end{align*}
  Clearly, $\mathcal{R}_{\mathcal{N}_{\Gamma}(\overline{x},\overline{t})}(\overline{v})
  \subseteq\mathbb{R}\times\mathbb{R}_{-}\times \mathbb{R}_{-}$. Furthermore,
  for any $(\omega;b;\pi)\in \mathbb{R}\times\mathbb{R}_{-}\times\mathbb{R}_{-}$,
  \[
  \left(\begin{matrix}
   \omega\\ b\\ \pi
   \end{matrix}\right)
   =\left(\begin{matrix}
    {\rm diag}(Y+a\widehat{\lambda})\\  \langle E, Y\rangle
     \end{matrix}\right)\ {\rm with}\
  Y=\left(\begin{matrix}
        b & -b\\ -b & b
        \end{matrix}\right)\in\mathbb{S}_{-}^2,\, a=b-\omega\in\mathbb{R},\tau=\pi\in\mathbb{R}_{-}.
  \]
 This shows that $\mathcal{R}_{\mathcal{N}_{\Gamma}(\overline{x},\overline{t})}(\overline{v})
  =\mathbb{R}\times\mathbb{R}_{-}\times \mathbb{R}_{-}$, and hence is closed,
 although $\mathcal{M}_{(\overline{x},\overline{t})}$ is not isolated calm
 at $\widehat{v}$ for $(\widehat{\lambda},\overline{\tau})$.
 Along with the arguments in the first paragraph,
 we conclude that the isolated calmness of $\mathcal{M}_{(\overline{x},\overline{t})}$
 has no relation with the closedness of $\mathcal{R}_{\mathcal{N}_{\Gamma}(\overline{x})}(\overline{v})$.
 \end{example}
 \begin{example}\label{example3}
  Consider the constraint system $g(X)\in K$
  where $K=\{0_{2\times 2}\}\times\mathbb{S}_{+}^2$ and
  \[
    g(X):=\left(\begin{matrix}
                   X+C\\ X
              \end{matrix}\right)
                   \ {\rm with}\
    C:=\left(\begin{matrix}
             0 & 0\\ 0 & -1
       \end{matrix}\right)\ \ {\rm for}\ X\in\mathbb{S}^2.
  \]
  Notice that
  \(
    \nabla g(X)(Y,S)=Y+S
  \)
  for $Y,S\in\mathbb{S}^2$. We consider the following points:
  \[
   \overline{X}=\left(\begin{matrix}
             0 & 0\\ 0 & 1
       \end{matrix}\right),\
   \overline{S}=\left(\begin{matrix}
             -1 & 0\\ 0 & 0
       \end{matrix}\right),\
   \overline{Y}=0_{2\times 2}\ \ {\rm and}\ \
   \overline{v}=\overline{S}.
  \]
 Clearly, $(\overline{Y},\overline{S})\in{\rm ri}(\mathcal{N}_{K}(g(\overline{X})))$
 and $(\overline{Y},\overline{S})\in\mathcal{M}_{\overline{X}}(\overline{v})$.
 The strict complementarity condition is satisfied at $(\overline{X},\overline{v})$, but
 $\mathcal{M}_{\overline{X}}$ is not isolated
 calm at $\overline{v}$ since $\mathcal{M}_{\overline{X}}(\overline{v})$
 is not singleton. Together with Example \ref{example1}, we conclude that
 the strict complementarity condition has no relation with
 the isolated calmness of $\mathcal{M}_{\overline{x}}$.
 \end{example}
 \section{Graphical derivative of the mapping $\mathcal{N}_{\Gamma}$}\label{sec3}

  By Corollary \ref{TNcone-Gamma}, when $\mathcal{G}$ is metrically subregular
  at $\overline{x}$ for $0$, $(\overline{x},\overline{v})\in{\rm gph}\mathcal{N}_{\Gamma}$
  if and only if there exists $\overline{\lambda}\in\mathcal{N}_K(g(\overline{x}))$ such that
  $\overline{v}=\nabla g(\overline{x})\overline{\lambda}$. By this, we define the mapping
  \begin{equation}\label{Phimap}
    \Phi(x,\lambda,v):=
    \left(\begin{matrix}
      -v+\nabla g(x)\lambda\\
       g(x)-\Pi_K(g(x)\!+\lambda)
   \end{matrix}\right)\quad{\rm for}\ (x,\lambda,v)\in\mathbb{X}\times\mathbb{Y}\times\mathbb{X}.
  \end{equation}
  Since $\Pi_{K}$ is directionally differentiable at $x$ in the Hadamard sense
  by \cite[Theorem 7.2]{BCS98} and \cite[Proposition 2.49]{BS00} and
  the mapping $\nabla g$ is continuously differentiable, the mapping $\Phi$
  is locally Lipschitz and directionally differentiable. In Subsection \ref{subsec3.1},
  we shall characterize the graphical derivative of $\mathcal{N}_{\Gamma}$
  under the metric subregularity of $\Phi$.
 \subsection{Characterization for graphical derivative of $\mathcal{N}_{\Gamma}$}\label{subsec3.1}

  First we present a lower estimation for the graphical derivative
  of $\mathcal{N}_{\Gamma}$ via that of $\Phi^{-1}$.
 \begin{lemma}\label{lestimate-lemma1}
  Consider an arbitrary $(\overline{x},\overline{v})\in{\rm gph}\mathcal{N}_{\Gamma}$.
  Suppose that the multifunction $\mathcal{G}$ in \eqref{MGmap} is metrically subregular
  at $\overline{x}$ for the origin, and that the mapping $\Phi$ is metrically subregular
  at each $(\overline{x},\lambda,\overline{v})$
  with $\lambda\in\mathcal{M}_{\overline{x}}(\overline{v})$ for the origin.
  Then, it holds that
  \[
   \!\mathcal{T}_{{\rm gph}\mathcal{N}_{\Gamma}}(\overline{x},\overline{v})
    \supseteq\!\bigcup_{\lambda\in\mathcal{M}_{\overline{x}}(\overline{v})}\!
    \Big\{(d,w)\in\mathbb{X}\times\mathbb{X}\ |\ \exists\mu\in\mathbb{Y}\ {\rm s.t.}\ (d,\mu,w)\in D\Phi^{-1}((0,0)|(\overline{x},\lambda,\overline{v}))(0,0)\Big\}.
  \]
 \end{lemma}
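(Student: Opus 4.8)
The plan is to exploit the fact that, under the assumed metric subregularity of $\Phi$, the graphical derivative $D\Phi^{-1}((0,0)|(\overline{x},\lambda,\overline{v}))(0,0)$ collapses onto the contingent cone of the solution set $\Phi^{-1}(0,0)$ at the reference point, and that every tangent vector to this solution set produces honest approximating sequences that land in ${\rm gph}\mathcal{N}_{\Gamma}$. First I would identify $\Phi^{-1}(0,0)$ explicitly. From the definition \eqref{Phimap} and the projection characterization of normals, $\Phi(x,\lambda,v)=(0,0)$ holds if and only if $g(x)\in K$, $\lambda\in\mathcal{N}_K(g(x))$ and $v=\nabla g(x)\lambda$; that is, $\Phi^{-1}(0,0)$ is precisely the associated KKT set. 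In particular, for each $\lambda\in\mathcal{M}_{\overline{x}}(\overline{v})$ we have $(\overline{x},\lambda,\overline{v})\in\Phi^{-1}(0,0)$, and by Corollary \ref{TNcone-Gamma} this multiplier set is nonempty since $(\overline{x},\overline{v})\in{\rm gph}\mathcal{N}_{\Gamma}$ and $\mathcal{G}$ is metrically subregular at $\overline{x}$ for the origin.

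The central step is to convert the graphical derivative into a contingent cone. By Remark \ref{remark21}, the metric subregularity of $\Phi$ at $(\overline{x},\lambda,\overline{v})$ for the origin is equivalent to the calmness of $\Phi^{-1}$ at $(0,0)$ for $(\overline{x},\lambda,\overline{v})$. Invoking the second (converse) part of Lemma \ref{TF-relation} with $\mathcal{F}=\Phi^{-1}$, $\overline{z}=(0,0)$ and $\overline{w}=(\overline{x},\lambda,\overline{v})$, I obtain the equality
\[
  D\Phi^{-1}\big((0,0)\,|\,(\overline{x},\lambda,\overline{v})\big)(0,0)
  =\mathcal{T}_{\Phi^{-1}(0,0)}(\overline{x},\lambda,\overline{v}).
\]
Consequently, any $(d,\mu,w)$ in the left-hand set is a genuine tangent vector to $\Phi^{-1}(0,0)$, so by the very definition of the contingent cone there exist $t_k\downarrow 0$ and $(d^k,\mu^k,w^k)\to(d,\mu,w)$ with $(\overline{x}+t_kd^k,\,\lambda+t_k\mu^k,\,\overline{v}+t_kw^k)\in\Phi^{-1}(0,0)$ for every $k$.

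It then remains to translate this into membership in ${\rm gph}\mathcal{N}_{\Gamma}$. Writing $x^k:=\overline{x}+t_kd^k$, the inclusion into $\Phi^{-1}(0,0)$ gives $g(x^k)\in K$, hence $x^k\in\Gamma$, together with $\lambda+t_k\mu^k\in\mathcal{N}_K(g(x^k))$ and $\overline{v}+t_kw^k=\nabla g(x^k)(\lambda+t_k\mu^k)$. For $k$ large, $x^k$ lies near $\overline{x}$, so by the robustness of the metric subregularity of $\mathcal{G}$ (Remark \ref{robust-remark}) the mapping $\mathcal{G}$ is metrically subregular at $x^k$ for the origin; Corollary \ref{TNcone-Gamma} then yields $\overline{v}+t_kw^k\in\mathcal{N}_{\Gamma}(x^k)$, that is, $(\overline{x},\overline{v})+t_k(d^k,w^k)\in{\rm gph}\mathcal{N}_{\Gamma}$. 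Since $(d^k,w^k)\to(d,w)$ and $t_k\downarrow 0$, the definition of the contingent cone gives $(d,w)\in\mathcal{T}_{{\rm gph}\mathcal{N}_{\Gamma}}(\overline{x},\overline{v})$, and taking the union over $\lambda\in\mathcal{M}_{\overline{x}}(\overline{v})$ delivers the claimed inclusion.

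I expect the pivotal and least routine point to be the recognition that the metric subregularity of $\Phi$ is exactly what is needed to identify the (a priori larger) graphical-derivative set $D\Phi^{-1}(\cdots)(0,0)$ with the contingent cone $\mathcal{T}_{\Phi^{-1}(0,0)}(\cdots)$: without this hypothesis one only has the trivial inclusion $\mathcal{T}_{\Phi^{-1}(0,0)}\subseteq D\Phi^{-1}(\cdots)(0,0)$ from the first part of Lemma \ref{TF-relation}, and no mechanism to extract the approximating sequences that drive the argument. The second, comparatively routine, subtlety is ensuring that the perturbed base points $x^k$ inherit the metric subregularity of $\mathcal{G}$, which is guaranteed by the robustness stated in Remark \ref{robust-remark}, so that Corollary \ref{TNcone-Gamma} may be applied pointwise along the sequence.
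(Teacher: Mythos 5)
Your proposal is correct, and it shares with the paper's proof the decisive step: using Remark \ref{remark21} to convert the assumed metric subregularity of $\Phi$ at $(\overline{x},\lambda,\overline{v})$ into calmness of $\Phi^{-1}$ at $(0,0)$ for $(\overline{x},\lambda,\overline{v})$, and then invoking Lemma \ref{TF-relation} to identify $D\Phi^{-1}((0,0)|(\overline{x},\lambda,\overline{v}))(0,0)$ with $\mathcal{T}_{\Phi^{-1}(0,0)}(\overline{x},\lambda,\overline{v})$. Where you genuinely diverge is in how this tangent cone is transported into $\mathcal{T}_{{\rm gph}\mathcal{N}_{\Gamma}}(\overline{x},\overline{v})$. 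The paper first establishes the local graph identity ${\rm gph}\mathcal{N}_{\Gamma}\cap(\mathcal{V}\times\mathbb{X})=\mathcal{A}(\Phi^{-1}(0,0))\cap(\mathcal{V}\times\mathbb{X})$ for the linear projection $\mathcal{A}(x,y,x'):=(x,x')$ (this is where Remark \ref{robust-remark} and Corollary \ref{TNcone-Gamma} enter), and then cites the image rule for tangent cones, \cite[Theorem 6.43]{RW98}, to conclude that $\mathcal{T}_{{\rm gph}\mathcal{N}_{\Gamma}}(\overline{x},\overline{v})$ contains $\mathcal{A}\big(\mathcal{T}_{\Phi^{-1}(0,0)}(\overline{x},\lambda,\overline{v})\big)$ for each $\lambda\in\mathcal{M}_{\overline{x}}(\overline{v})$. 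You instead unwind the tangent cone by definition into sequences $(\overline{x}+t_kd^k,\lambda+t_k\mu^k,\overline{v}+t_kw^k)\in\Phi^{-1}(0,0)$ and verify by hand --- via the explicit description of $\Phi^{-1}(0,0)$ as the set of triples with $g(x)\in K$, $\lambda\in\mathcal{N}_K(g(x))$, $v=\nabla g(x)\lambda$, together with the robustness of the metric subregularity of $\mathcal{G}$ (Remark \ref{robust-remark}) and Corollary \ref{TNcone-Gamma} applied pointwise at each $x^k$ --- that the projected points $(\overline{x}+t_kd^k,\overline{v}+t_kw^k)$ lie in ${\rm gph}\mathcal{N}_{\Gamma}$, whence tangency follows from the definition. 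The two routes use exactly the same hypotheses in exactly the same places; yours is more elementary and self-contained (it needs only the one inclusion $\mathcal{A}(\Phi^{-1}(0,0))\subseteq{\rm gph}\mathcal{N}_{\Gamma}$ near the reference point, and avoids the citation of the image rule), while the paper's is more compact, packaging your sequential argument into a single application of a standard calculus result.
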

 \begin{proof}
  Define $\mathcal{A}(x,y,x'):=(x,x')$ for $(x,y,x')\in\mathbb{X}\times\mathbb{Y}\times\mathbb{X}$.
  By Remark \ref{robust-remark}, there exists a neighborhood $\mathcal{V}$ of $\overline{x}$
  such that the multifunction $\mathcal{G}$ in \eqref{MGmap} is metrically subregular
  at each $x\in\mathcal{V}\cap\Gamma$ for the origin. From Corollary \ref{TNcone-Gamma},
  it follows that
  \[
  {\rm gph}\mathcal{N}_{\Gamma}\cap(\mathcal{V}\times\mathbb{X})
  =\mathcal{A}(\Phi^{-1}(0,0))\cap(\mathcal{V}\times\mathbb{X}).
  \]
  By virtue of \cite[Theorem 6.43]{RW98}, we obtain the following inclusion
  \begin{align}\label{temp-tcone-Nomega}
   \mathcal{T}_{{\rm gph}\mathcal{N}_{\Gamma}}(\overline{x},\overline{v})
   &\supseteq\bigcup_{z\in\mathcal{A}^{-1}(\overline{x},\overline{v})\cap\Phi^{-1}(0,0)}
    \Big\{\mathcal{A}(\xi,\eta,\zeta)\ |\ (\xi,\eta,\zeta)\in\mathcal{T}_{\Phi^{-1}(0,0)}(z)\Big\}\nonumber\\
   &=\bigcup_{\lambda\in\mathcal{M}_{\overline{x}}(\overline{v})}
    \Big\{\mathcal{A}(\xi,\eta,\zeta)\ |\ (\xi,\eta,\zeta)\in\mathcal{T}_{\Phi^{-1}(0,0)}(\overline{x},\lambda,\overline{v})\Big\},
  \end{align}
  where the equality is due to the definitions of $\mathcal{A}$ and
  $\mathcal{M}_{\overline{x}}(\overline{v})$.
  Since $\Phi$ is metrically subregular at each $(\overline{x},\lambda,\overline{v})$
  with $\lambda\in\mathcal{M}_{\overline{x}}(\overline{v})$ for the origin,
  by virtue of Lemma \ref{TF-relation},
  \[
    (\xi,\eta,\zeta)\in\mathcal{T}_{\Phi^{-1}(0,0)}(\overline{x},\lambda,\overline{v})
    \Longleftrightarrow (0,0,\xi,\eta,\zeta)\in\mathcal{T}_{{\rm gph}\Phi^{-1}}(0,0,\overline{x},\lambda,\overline{v}).
  \]
  Together with the inclusion in \eqref{temp-tcone-Nomega} and the definition of $\mathcal{A}$,
  it follows that
  \[
    \mathcal{T}_{{\rm gph}\mathcal{N}_{\Gamma}}(\overline{x},\overline{v})
    \supseteq\!\bigcup_{\lambda\in\mathcal{M}_{\overline{x}}(\overline{v})}
    \Big\{(\xi,\zeta)\ |\ \exists\eta\in\mathbb{Y}\ {\rm s.t.}\
    (\xi,\eta,\zeta)\in D\Phi^{-1}((0,0)|(\overline{x},\lambda,\overline{v}))(0,0)\Big\}.
  \]
  This shows that the desired inclusion holds. The proof is completed.
 \end{proof}

 The following lemma gives the characterization on the graphical derivative of $\Phi^{-1}$.
 \begin{lemma}\label{Phi-derivative}
  Let $\Phi$ be defined by \eqref{Phimap}. Consider an arbitrary
  $(\overline{x},\overline{\lambda},\overline{v})\in\Phi^{-1}(0,0)$. Then,
  \begin{align*}
   &(\Delta x,\Delta\lambda,\Delta v)
     \in D\Phi^{-1}((0,0)|(\overline{x},\overline{\lambda},\overline{v}))(\Delta \xi,\Delta\eta)\\
   &\Longleftrightarrow
    \left\{\begin{array}{ll}
     \Delta\xi=\nabla^2\langle\overline{\lambda},g\rangle(\overline{x})\Delta x+\nabla g(\overline{x})\Delta\lambda-\Delta v;\\
     \Delta\eta=g'(\overline{x})\Delta x-\Pi_K'(g(\overline{x})\!+\overline{\lambda};g'(\overline{x})\Delta x\!+\!\Delta\lambda).
      \end{array}\right.
  \end{align*}
 \end{lemma}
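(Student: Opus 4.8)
The plan is to reduce the graphical derivative of $\Phi^{-1}$ to the ordinary directional derivative of the single-valued mapping $\Phi$, and then to compute that directional derivative componentwise. As recorded just before this subsection, $\Phi$ is locally Lipschitz near $(\overline{x},\overline{\lambda},\overline{v})$ and Hadamard directionally differentiable there, since $\nabla g$ is continuously differentiable and $\Pi_K$ is Hadamard directionally differentiable by \cite[Theorem 7.2]{BCS98} and \cite[Proposition 2.49]{BS00}. The first observation I would establish is the elementary fact that for any single-valued map that is locally Lipschitz and Hadamard directionally differentiable at a point $\overline{z}$, the contingent cone to its graph equals the graph of its directional derivative, i.e. $D\Phi(\overline{z}|\Phi(\overline{z}))(\Delta z)=\{\Phi'(\overline{z};\Delta z)\}$. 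This follows directly from the definition of the contingent cone: along any $t_k\downarrow 0$ and $\Delta z_k\to\Delta z$ with $\Phi(\overline{z}+t_k\Delta z_k)=\Phi(\overline{z})+t_k\Delta w_k$, the difference quotients $t_k^{-1}(\Phi(\overline{z}+t_k\Delta z_k)-\Phi(\overline{z}))$ converge to $\Phi'(\overline{z};\Delta z)$ by Hadamard differentiability, forcing $\Delta w_k\to\Phi'(\overline{z};\Delta z)$, and the converse inclusion is obtained by taking $\Delta z_k\equiv\Delta z$.

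Next I would pass to the inverse. Since ${\rm gph}\Phi^{-1}$ is the image of ${\rm gph}\Phi$ under the coordinate-swap isomorphism and contingent cones commute with linear isomorphisms, the membership $(\Delta x,\Delta\lambda,\Delta v)\in D\Phi^{-1}((0,0)|(\overline{x},\overline{\lambda},\overline{v}))(\Delta\xi,\Delta\eta)$ is equivalent to $(\Delta\xi,\Delta\eta)=\Phi'(\overline{x},\overline{\lambda},\overline{v};(\Delta x,\Delta\lambda,\Delta v))$. It then remains to compute the two components of this directional derivative. For the first component $(x,\lambda,v)\mapsto -v+\nabla g(x)\lambda$, I would differentiate the term $\nabla g(x)\lambda=g'(x)^*\lambda$ along $(\Delta x,\Delta\lambda)$ by the product rule, using that $g$ is $C^2$ so this term is continuously Fr\'echet differentiable: the $\lambda$-part contributes $\nabla g(\overline{x})\Delta\lambda$, while the $x$-part is the derivative of $x\mapsto\nabla_x\langle\lambda,g(x)\rangle$, namely $\nabla^2\langle\overline{\lambda},g\rangle(\overline{x})\Delta x$; together with the trivial $-\Delta v$ this yields exactly the stated $\Delta\xi$. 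For the second component $(x,\lambda)\mapsto g(x)-\Pi_K(g(x)+\lambda)$, I would apply the Hadamard chain rule: the inner map has directional derivative $g'(\overline{x})\Delta x+\Delta\lambda$, and composing with $\Pi_K$ at $g(\overline{x})+\overline{\lambda}$ gives $\Pi_K'(g(\overline{x})+\overline{\lambda};g'(\overline{x})\Delta x+\Delta\lambda)$, producing the stated $\Delta\eta$.

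The only delicate point is this componentwise differentiation of the second component. One must invoke the Hadamard (not merely G\^ateaux) directional differentiability of $\Pi_K$ together with the local Lipschitz continuity of $\Pi_K$ and of $g$, so that the chain rule through $\Pi_K$ is legitimate and the difference quotients converge even along varying directions $\Delta z_k\to\Delta z$; this is precisely what was secured by the cited regularity of $\Pi_K$. The first component is routine given that $g\in C^2$. Once both directional derivatives are identified with the right-hand sides of the asserted system, the claimed equivalence reads off immediately from the reduction in the preceding paragraphs.
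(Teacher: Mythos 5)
Your proof is correct and takes essentially the same approach as the paper: identify the graphical derivative of the single-valued, locally Lipschitz, directionally differentiable map $\Phi$ with its (Hadamard) directional derivative, compute $\Phi'$ componentwise (product rule for $-v+\nabla g(x)\lambda$, chain rule through $\Pi_K$ for the second block), and pass to $\Phi^{-1}$ via the graph swap. The only difference is cosmetic: you spell out the elementary identity $D\Phi(\overline{z}|\Phi(\overline{z}))(\Delta z)=\{\Phi'(\overline{z};\Delta z)\}$ and the swap argument, which the paper simply asserts, and you argue the chain rule directly where the paper cites \cite[Proposition 2.47]{BS00}.
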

 \begin{proof}
  Since the mapping $\Phi$ is locally Lipschitz and directionally differentiable,
  we have
  \begin{align*}
   &D\Phi((\overline{x},\overline{\lambda},\overline{v})|(0,0))(\Delta x,\Delta\lambda,\Delta v)\\
   &=\Big\{(\Delta \xi,\Delta\eta)\in\mathbb{X}\times\mathbb{Y}\ |\
      \Phi'((\overline{x},\overline{\lambda},\overline{v});(\Delta x,\Delta\lambda,\Delta v))
      =(\Delta \xi,\Delta\eta)\Big\}.
  \end{align*}
  In addition, by the expression of $\Phi$ and \cite[Proposition 2.47]{BS00},
  we calculate that
  \[
    \Phi'((\overline{x},\overline{\lambda},\overline{v});(\Delta x,\Delta\lambda,\Delta v))
    =\left(\begin{matrix}
       \nabla^2\langle\overline{\lambda},g\rangle(\overline{x})\Delta x+\nabla g(\overline{x})\Delta\lambda-\Delta v\\
       g'(\overline{x})\Delta x-\Pi_K'(g(\overline{x})\!+\overline{\lambda};g'(\overline{x})\Delta x\!+\Delta\lambda)
     \end{matrix}\right).
  \]
  Notice that $(\Delta x,\Delta\lambda,\Delta v)
     \in D\Phi^{-1}((0,0)|(\overline{x},\overline{\lambda},\overline{v}))(\Delta \xi,\Delta\eta)$
  if and only if $(\Delta \xi,\Delta\eta)$ lies in
  $D\Phi((\overline{x},\overline{\lambda},\overline{v})|(0,0))(\Delta x,\Delta\lambda,\Delta v)$.
  The result follows from the last two equations.
  \end{proof}

  By combining Lemma \ref{lestimate-lemma1} with Lemma \ref{Phi-derivative}
  and using Lemma \ref{dir-proj}, we readily obtain a lower estimation for
  the graphical derivative of the mapping $\mathcal{N}_{\Gamma}$.
 \begin{proposition}\label{lestimate}
  Consider an arbitrary $(\overline{x},\overline{v})\in{\rm gph}\mathcal{N}_{\Gamma}$.
  Suppose that the multifunction $\mathcal{G}$ in \eqref{MGmap} is metrically subregular
  at $\overline{x}$ for the origin. If the mapping $\Phi$ in \eqref{Phimap}
  is metrically subregular at each $(\overline{x},\lambda,\overline{v})$ with
  $\lambda\in\mathcal{M}_{\overline{x}}(\overline{v})$ for the origin,
  then
  \[
   \!\mathcal{T}_{{\rm gph}\mathcal{N}_{\Gamma}}(\overline{x},\overline{v})
   \supseteq\!\bigcup_{\lambda\in\mathcal{M}_{\overline{x}}(\overline{v})}\!
    \Big\{(d,w)\in\mathbb{X}\times\mathbb{X}\ |\ w\in \nabla^2\langle\lambda,g\rangle(\overline{x})d+\nabla\!g(\overline{x})
     D\mathcal{N}_K(g(\overline{x})|\lambda)(g'(\overline{x})d)\Big\}.
  \]
 \end{proposition}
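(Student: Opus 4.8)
The plan is to chain the three preceding lemmas, since each supplies exactly one ingredient and no new estimate has to be built from scratch. First I would invoke Lemma \ref{lestimate-lemma1}: because $\mathcal{G}$ is metrically subregular at $\overline{x}$ for the origin and $\Phi$ is metrically subregular at every $(\overline{x},\lambda,\overline{v})$ with $\lambda\in\mathcal{M}_{\overline{x}}(\overline{v})$ for the origin, its hypotheses are met verbatim. Hence $\mathcal{T}_{{\rm gph}\mathcal{N}_{\Gamma}}(\overline{x},\overline{v})$ already contains the union over $\lambda\in\mathcal{M}_{\overline{x}}(\overline{v})$ of those pairs $(d,w)$ for which there exists some $\mu\in\mathbb{Y}$ with $(d,\mu,w)\in D\Phi^{-1}((0,0)|(\overline{x},\lambda,\overline{v}))(0,0)$. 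Thus the whole task reduces to rewriting the membership $(d,\mu,w)\in D\Phi^{-1}((0,0)|(\overline{x},\lambda,\overline{v}))(0,0)$ in terms of the graphical derivative $D\mathcal{N}_K$.

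Next I would specialize Lemma \ref{Phi-derivative} to the point $(\overline{x},\lambda,\overline{v})\in\Phi^{-1}(0,0)$, taking the two output directions to be zero, i.e. $\Delta\xi=0$ and $\Delta\eta=0$, and writing $\Delta x=d$, $\Delta\lambda=\mu$, $\Delta v=w$. This converts the membership into the pair of equations
\[
0=\nabla^2\langle\lambda,g\rangle(\overline{x})d+\nabla g(\overline{x})\mu-w \quad{\rm and}\quad 0=g'(\overline{x})d-\Pi_K'\big(g(\overline{x})+\lambda;\,g'(\overline{x})d+\mu\big).
\]
The first equation simply solves for $w$, giving $w=\nabla^2\langle\lambda,g\rangle(\overline{x})d+\nabla g(\overline{x})\mu$. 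For the second, I would apply Lemma \ref{dir-proj} with $\overline{y}=g(\overline{x})$, $\overline{\lambda}=\lambda$ (so that $y=\overline{y}+\overline{\lambda}=g(\overline{x})+\lambda$), $\Delta y=g'(\overline{x})d$ and $\Delta\lambda=\mu$: its first equivalence says precisely that $g'(\overline{x})d-\Pi_K'(g(\overline{x})+\lambda;\,g'(\overline{x})d+\mu)=0$ holds if and only if $\mu\in D\mathcal{N}_K(g(\overline{x})|\lambda)(g'(\overline{x})d)$.

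Combining the two observations, the existence of $\mu$ with $(d,\mu,w)\in D\Phi^{-1}((0,0)|(\overline{x},\lambda,\overline{v}))(0,0)$ is equivalent to the existence of $\mu\in D\mathcal{N}_K(g(\overline{x})|\lambda)(g'(\overline{x})d)$ satisfying $w=\nabla^2\langle\lambda,g\rangle(\overline{x})d+\nabla g(\overline{x})\mu$, which is exactly the relation $w\in\nabla^2\langle\lambda,g\rangle(\overline{x})d+\nabla g(\overline{x})D\mathcal{N}_K(g(\overline{x})|\lambda)(g'(\overline{x})d)$. Taking the union over $\lambda\in\mathcal{M}_{\overline{x}}(\overline{v})$ and feeding this back into the inclusion from Lemma \ref{lestimate-lemma1} yields the claimed lower estimate. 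Since every step is a direct substitution, there is no genuine analytic obstacle; the only point requiring care is the bookkeeping between the dummy multiplier $\mu$ (the second coordinate of the preimage direction in $D\Phi^{-1}$) and the graphical-derivative argument in Lemma \ref{dir-proj}, namely checking that the existential quantifier over $\mu$ threads correctly from the $\Phi^{-1}$ description through to the final $\nabla g(\overline{x})D\mathcal{N}_K(g(\overline{x})|\lambda)(\cdot)$ term. All the substantive work—the metric-subregularity-based tangent-cone calculus, the directional-derivative formula for $\Phi$, and the reduction of $D\mathcal{N}_K$ to $\Pi_K'$—is already carried out in the three cited lemmas.
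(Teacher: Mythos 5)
Your proposal is correct and follows exactly the route the paper intends: the paper states that Proposition \ref{lestimate} is obtained "by combining Lemma \ref{lestimate-lemma1} with Lemma \ref{Phi-derivative} and using Lemma \ref{dir-proj}," and your argument is precisely that chain, with the bookkeeping (setting $\Delta\xi=\Delta\eta=0$ in Lemma \ref{Phi-derivative} and identifying the second equation with the first equivalence of Lemma \ref{dir-proj}) worked out explicitly. The only implicit point worth noting is that each $\lambda\in\mathcal{M}_{\overline{x}}(\overline{v})$ indeed satisfies $(\overline{x},\lambda,\overline{v})\in\Phi^{-1}(0,0)$ (since $\lambda\in\mathcal{N}_K(g(\overline{x}))$ is equivalent to $g(\overline{x})=\Pi_K(g(\overline{x})+\lambda)$), which is needed before Lemma \ref{Phi-derivative} can be invoked.
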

 \begin{remark}\label{low-estimate-remark}
  During the reviewing of this paper, we learned that Gfrerer and Mordukhovich
  only under the metric subregularity of $\mathcal{G}$ derived a lower estimation
  for the graphical derivative of $\mathcal{N}_{\Gamma}$ (see \cite[Theorem 3.3]{Gfrerer171}),
  which has a little difference from the one
  in Proposition \ref{lestimate} but agrees with it
  under the closedness of $\mathcal{R}_{\mathcal{N}_{\Gamma}(\overline{x})}(\overline{v})$.
 \end{remark}

  Next we concentrate on an upper estimation for the graphical derivative
  of $\mathcal{N}_{\Gamma}$.
  \begin{proposition}\label{uestimate}
   Consider an arbitrary $(\overline{x},\overline{v})\in{\rm gph}\mathcal{N}_{\Gamma}$.
   Suppose that $\mathcal{G}$ is metrically subregular at $\overline{x}$ for $0$,
   and that $\mathcal{M}_{\overline{x}}$ is isolated calm at $\overline{v}$
   for some $\overline{\lambda}\in\mathcal{M}_{\overline{x}}(\overline{v})$. Then,
   \begin{equation*}
    \!\mathcal{T}_{{\rm gph}\mathcal{N}_{\Gamma}}(\overline{x},\overline{v})
   \subseteq\Big\{(d,w)\in\mathbb{X}\times\mathbb{X}\ |\ w\in \nabla^2\langle\overline{\lambda},g\rangle(\overline{x})d+\nabla\!g(\overline{x})
     D\mathcal{N}_K(g(\overline{x})|\overline{\lambda})(g'(\overline{x})d)\Big\}.
  \end{equation*}
  \end{proposition}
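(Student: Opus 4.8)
The plan is to prove the inclusion by a direct sequential argument on the contingent cone $\mathcal{T}_{{\rm gph}\mathcal{N}_{\Gamma}}(\overline{x},\overline{v})$, with the isolated calmness of $\mathcal{M}_{\overline{x}}$ supplying the compactness that makes the limit pass through. First I would fix an arbitrary $(d,w)\in\mathcal{T}_{{\rm gph}\mathcal{N}_{\Gamma}}(\overline{x},\overline{v})$ and select $t_k\downarrow 0$ and $(d^k,w^k)\to(d,w)$ with $\overline{v}+t_kw^k\in\mathcal{N}_{\Gamma}(\overline{x}+t_kd^k)$. Writing $x^k:=\overline{x}+t_kd^k$, the robustness of the metric subregularity of $\mathcal{G}$ (Remark \ref{robust-remark}) together with Corollary \ref{TNcone-Gamma} yields, for all large $k$, a multiplier $\lambda^k\in\mathcal{N}_K(g(x^k))$ with $\overline{v}+t_kw^k=\nabla g(x^k)\lambda^k$. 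I would also record at the outset that the isolated calmness hypothesis forces $\mathcal{M}_{\overline{x}}(\overline{v})=\{\overline{\lambda}\}$: the set $\mathcal{M}_{\overline{x}}(\overline{v})$ is convex, and a convex set meeting a ball around one of its points in a singleton must coincide with that singleton.

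Next I would establish the two boundedness facts on which everything rests. To see that $\{\lambda^k\}$ is bounded, suppose $\|\lambda^k\|\to\infty$ along a subsequence; normalizing, $\lambda^k/\|\lambda^k\|$ stays in the cone $\mathcal{N}_K(g(x^k))$, and any cluster point $\mu$ satisfies $\nabla g(\overline{x})\mu=0$ (since $\nabla g(x^k)\lambda^k=\overline{v}+t_kw^k$ is bounded) and, by outer semicontinuity of $\mathcal{N}_K$, $\mu\in\mathcal{N}_K(g(\overline{x}))\subseteq\mathcal{T}_{\mathcal{N}_K(g(\overline{x}))}(\overline{\lambda})$; this contradicts ${\rm Ker}(\nabla g(\overline{x}))\cap\mathcal{T}_{\mathcal{N}_K(g(\overline{x}))}(\overline{\lambda})=\{0\}$ from Proposition \ref{prop-Mx}. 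Boundedness together with outer semicontinuity then gives $\lambda^k\to\overline{\lambda}$, the unique element of $\mathcal{M}_{\overline{x}}(\overline{v})$. The hard part will be the finer claim that the difference quotients $\Delta\lambda^k:=(\lambda^k-\overline{\lambda})/t_k$ are bounded. I would again argue by contradiction: set $s_k:=t_k\|\Delta\lambda^k\|\downarrow 0$ and pass to a cluster point $\zeta$ of $\Delta\lambda^k/\|\Delta\lambda^k\|$ with $\|\zeta\|=1$. Dividing $(g(x^k),\lambda^k)-(g(\overline{x}),\overline{\lambda})$ by $s_k$ and using $\|\Delta\lambda^k\|\to\infty$ shows $(0,\zeta)\in\mathcal{T}_{{\rm gph}\mathcal{N}_K}(g(\overline{x}),\overline{\lambda})$, i.e. $\zeta\in D\mathcal{N}_K(g(\overline{x})|\overline{\lambda})(0)$; dividing the identity $t_kw^k=\nabla g(x^k)\lambda^k-\nabla g(\overline{x})\overline{\lambda}$ by $s_k$ and invoking the Lipschitz bound on $\nabla g$ gives $\nabla g(\overline{x})\zeta=0$. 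Hence $\zeta\in{\rm Ker}(\nabla g(\overline{x}))\cap D\mathcal{N}_K(g(\overline{x})|\overline{\lambda})(0)=\{0\}$ by Proposition \ref{prop-Mx}, contradicting $\|\zeta\|=1$.

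With $\{\Delta\lambda^k\}$ bounded I would pass to a subsequence $\Delta\lambda^k\to\Delta\lambda$. Since $\frac{g(x^k)-g(\overline{x})}{t_k}\to g'(\overline{x})d$ and $\frac{\lambda^k-\overline{\lambda}}{t_k}\to\Delta\lambda$ while each $(g(x^k),\lambda^k)\in{\rm gph}\mathcal{N}_K$, taking the limit yields $(g'(\overline{x})d,\Delta\lambda)\in\mathcal{T}_{{\rm gph}\mathcal{N}_K}(g(\overline{x}),\overline{\lambda})$, that is, $\Delta\lambda\in D\mathcal{N}_K(g(\overline{x})|\overline{\lambda})(g'(\overline{x})d)$. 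Finally, dividing $t_kw^k=\nabla g(x^k)\lambda^k-\nabla g(\overline{x})\overline{\lambda}$ by $t_k$ and splitting $\nabla g(x^k)\lambda^k-\nabla g(\overline{x})\overline{\lambda}=[\nabla g(x^k)-\nabla g(\overline{x})]\lambda^k+\nabla g(\overline{x})(\lambda^k-\overline{\lambda})$, the first term tends to $\nabla^2\langle\overline{\lambda},g\rangle(\overline{x})d$ (replacing $\lambda^k$ by its limit $\overline{\lambda}$ up to a vanishing error) and the second to $\nabla g(\overline{x})\Delta\lambda$, so $w=\nabla^2\langle\overline{\lambda},g\rangle(\overline{x})d+\nabla g(\overline{x})\Delta\lambda$. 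This exhibits $(d,w)$ in the right-hand set and completes the proof.
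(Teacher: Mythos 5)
Your proposal is correct, and it reaches the same conclusion through the same overall skeleton as the paper (fix a tangent pair, extract multipliers $\lambda^k\in\mathcal{N}_K(g(x^k))$, show $\lambda^k\to\overline{\lambda}$, show the difference quotients $(\lambda^k-\overline{\lambda})/t_k$ are bounded, pass to the limit), but the way you justify the two boundedness claims is genuinely different from the paper's proof. The paper gets existence and boundedness of $\{\lambda^k\}$ from Robinson's CQ (which follows from the SRCQ and is stable under perturbation), and then establishes the crucial estimate $\|\lambda^k-\overline{\lambda}\|=O(t_k)$ \emph{quantitatively}: it invokes Theorem \ref{NK-calm} (calmness of $\mathcal{N}_K$, where the $C^2$-cone reducibility enters explicitly) to produce $\zeta^k\in\mathcal{N}_K(g(\overline{x}))$ with $\|\lambda^k-\zeta^k\|\le c\,t_k\,O(1)$, and then applies the isolated-calmness modulus of $\mathcal{M}_{\overline{x}}$ to the points $\widetilde{v}^k=\nabla g(\overline{x})\zeta^k$ to bound $\|\zeta^k-\overline{\lambda}\|=O(t_k)$ as well. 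You instead run a rescaling-and-contradiction argument twice: normalizing by $\|\lambda^k\|$ to get boundedness of the multipliers from ${\rm Ker}(\nabla g(\overline{x}))\cap\mathcal{T}_{\mathcal{N}_K(g(\overline{x}))}(\overline{\lambda})=\{0\}$, and normalizing by $s_k=\|\lambda^k-\overline{\lambda}\|$ to place a unit cluster point $\zeta$ in ${\rm Ker}(\nabla g(\overline{x}))\cap D\mathcal{N}_K(g(\overline{x})|\overline{\lambda})(0)$, which is $\{0\}$ precisely by the characterization of isolated calmness in Proposition \ref{prop-Mx}. Both routes are sound; note that your argument still rests on the reducibility of $K$, only indirectly, since the equivalence ${\rm Ker}(\nabla g(\overline{x}))\cap\mathcal{T}_{\mathcal{N}_K(g(\overline{x}))}(\overline{\lambda})=\{0\}\Leftrightarrow{\rm Ker}(\nabla g(\overline{x}))\cap D\mathcal{N}_K(g(\overline{x})|\overline{\lambda})(0)=\{0\}$ in Proposition \ref{prop-Mx} is itself proved via Theorem \ref{NK-calm} and Lemma \ref{TF-relation}. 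What your approach buys is economy: it needs only the qualitative characterization of isolated calmness (not its modulus), avoids the explicit construction of $\zeta^k$ and the estimates \eqref{lambdak-ineq1}--\eqref{wlambdak-ineq1}, and replaces Robinson's CQ by the robustness of metric subregularity (Remark \ref{robust-remark}) plus your own compactness argument. What it gives up is the quantitative rate $\|\lambda^k-\overline{\lambda}\|\le Ct_k$ with an explicit constant, which the paper's proof produces as a by-product.
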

  \begin{proof}
  Since $\mathcal{G}$ is metrically subregular at $\overline{x}$ for the origin
  and $(\overline{x},\overline{v})\in{\rm gph}\,\mathcal{N}_{\Gamma}$,
  by Corollary \ref{TNcone-Gamma}, $\mathcal{M}_{\overline{x}}(\overline{v})\ne\emptyset$.
  Since $\mathcal{M}_{\overline{x}}$ is isolated calm at $\overline{v}$ for
  $\overline{\lambda}\in\mathcal{M}_{\overline{x}}(\overline{v})$,
  by Proposition \ref{prop-Mx} the SRCQ for the system $g(x)\in K$ holds
  at $\overline{x}$ w.r.t. $\overline{\lambda}$. So,
  $\mathcal{M}_{\overline{x}}(\overline{v})=\{\overline{\lambda}\}$
  and Robinson's CQ for this system holds at $\overline{x}$.
  Now fix an arbitrary $(d,w)\in\mathcal{T}_{{\rm gph}\widehat{\mathcal{N}}_{\Gamma}}(\overline{x},\overline{v})$.
  Then, there exist $t_k\downarrow 0$ and $(d^k,w^k)\to(d,w)$ such that
  $(\overline{x}+t_kd^k,\overline{v}+t_kw^k)\in{\rm gph}\widehat{\mathcal{N}}_{\Gamma}$ for each $k$.
  Write $x^k:=\overline{x}+t_kd^k$ and $v^k:=\overline{v}+t_kw^k$.
  Since Robinson's CQ for the system $g(x)\in K$ holds at $\overline{x}$,
  there exists a neighborhood $\mathcal{U}$ of $\overline{x}$ such that
  Robinson's CQ for this system holds at each $z\in\mathcal{U}$.
  By Corollary \ref{TNcone-Gamma}, for each sufficiently large $k$,
  there exists $\lambda^k\in\mathcal{N}_K(g(x^k))$ such that
  $v^k=\nabla g(x^k)\lambda^k$. Furthermore, the sequence $\{\lambda^k\}$ is bounded.
  Taking a subsequence if necessary, we assume that $\{\lambda^k\}$ converges
  to $\widehat{\lambda}$. Since $\lambda^k\in\mathcal{N}_{K}(g(x^k))$,
  from the outer semicontinuity of $\mathcal{N}_K$ it follows that
  $\widehat{\lambda}\in\mathcal{N}_K(g(\overline{x}))$. In addition,
  from $v^k=\nabla g(x^k)\lambda^k$ we have $\overline{v}=\nabla g(\overline{x})\widehat{\lambda}$.
  This means that $\widehat{\lambda}\in\mathcal{M}_{\overline{x}}(\overline{v})=\{\overline{\lambda}\}$.

  \medskip

  By Theorem \ref{NK-calm}, $\mathcal{N}_K$ is calm at $g(\overline{x})$
  for $\overline{\lambda}$, i.e., there exist $\delta>0$ and $c>0$ such that
  \[
    \mathcal{N}_K(y)\cap\mathbb{B}(\overline{\lambda},\delta)
    \subset\mathcal{N}_K(g(\overline{x}))+c\|y-g(\overline{x})\|\mathbb{B}_{\mathbb{Y}}
    \quad\ \forall y\in\mathbb{Y}.
  \]
  From $\mathcal{N}_K(g(x^k))\ni\lambda^k\to\overline{\lambda}$,
  for each $k$ large enough, there exists $\zeta^k\in\mathcal{N}_K(g(\overline{x}))$ satisfying
  \begin{equation}\label{lambdak-ineq1}
    \|\lambda^k-\zeta^k\|={\rm dist}(\lambda^k,\mathcal{N}_K(g(\overline{x})))
    \le c\|g(x^k)-g(\overline{x})\|
    =ct_k\|g'(\overline{x})d^k+o(t_k)/t_k\|
  \end{equation}
  where the second equality is by the Taylor expansion of $g(x^k)$ at $\overline{x}$.
  Write $\widetilde{v}^k:=\nabla g(\overline{x})\zeta^k$. Clearly,
  $\zeta^k\in\mathcal{M}_{\overline{x}}(\widetilde{v}^k)$.
  Also, the last inequality implies $\zeta^k\to\overline{\lambda}$.
  By the isolated calmness of $\mathcal{M}_{\overline{x}}$ at $\overline{v}$
  for $\overline{\lambda}$, there exists a constant $\gamma>0$ (depending on
  $\overline{\lambda}$ and $\overline{v}$ only) such that for each $k$ large enough, $\|\zeta^k\!-\!\overline{\lambda}\|\le\gamma\|\overline{v}-\widetilde{v}^k\|$.
  Notice that
  \[
    \widetilde{v}^k=\overline{v}+t_kw^k+(\nabla g(\overline{x})-\nabla g(x^k))\zeta^k
    +\nabla g(x^k)(\zeta^k-\lambda^k).
  \]
  By virtue of $\|\nabla g(x^k)-\nabla g(\overline{x})\|\le t_k\|D^2g(\overline{x})d^k+o(t_k)/t_k\|$
  and \eqref{lambdak-ineq1}, we have
  \[
   \|\widetilde{v}^k-\overline{v}\|\le t_k\big[\|w^k\|+\|D^2g(\overline{x})d^k\|\|\zeta^k\|
    +c\|\nabla g(x^k)\|\|g'(\overline{x})d^k\|\big]+o(t_k)
  \]
  where $D^2g(\overline{x})$ is the second-order derivative of $g$ at $\overline{x}$.
  Along with $\|\zeta^k\!-\!\overline{\lambda}\|\le \gamma\|\overline{v}-\widetilde{v}^k\|$,
  \begin{equation}\label{wlambdak-ineq1}
    \|\zeta^k\!-\!\overline{\lambda}\|\le \gamma t_k\big[\|w^k\|+\|D^2g(\overline{x})d^k\|\|\zeta^k\|
    +c\|\nabla g(x^k)\|\|g'(\overline{x})d^k\|\big]+o(t_k).
  \end{equation}
  Write $\mu^k:=\frac{\lambda^k-\overline{\lambda}}{t_k}$.
  From inequalities \eqref{lambdak-ineq1} and \eqref{wlambdak-ineq1},
  the sequence $\{\mu^k\}$ is bounded. Taking a subsequence if necessary,
  we assume that $\mu^k$ converges to $\mu$. Notice that
  \begin{align*}
    \overline{v}+t_kw^k
    &=\nabla g(x^k)\lambda^k=\nabla g(x^k)\overline{\lambda}+\nabla g(x^k)(\lambda^k-\overline{\lambda})\\
    &=(\nabla g(\overline{x})+t_kD^2g(\overline{x})d^k)\overline{\lambda}
     +(\nabla g(\overline{x})+t_kD^2g(\overline{x})d^k)(\lambda^k-\overline{\lambda})+o(t_k)\\
    &=\overline{v}+t_k\big[\nabla^2\langle\overline{\lambda},g\rangle(\overline{x})d^k
      +\nabla g(\overline{x})\mu^k+t_k\nabla^2\langle\mu^k,g\rangle(\overline{x})d^k+o(t_k)/t_k\big].
  \end{align*}
  Hence,
  \(
    w^k=\nabla^2\langle\overline{\lambda},g\rangle(\overline{x})d^k
      +\nabla g(\overline{x})\mu^k+t_k\nabla^2\langle\mu^k,g\rangle(\overline{x})d^k+o(t_k)/t_k.
  \)
  Taking the limit, we obtain $w=\nabla^2\langle \overline{\lambda},g\rangle(\overline{x})\xi+\nabla g(\overline{x})\mu$.
  Finally, we prove that $\mu\in D\mathcal{N}_K(g(\overline{x})|\lambda)(g'(\overline{x})d)$,
  and the desired inclusion follows by the arbitrariness of
  $(d,w)\in\mathcal{T}_{{\rm gph}\mathcal{N}_{\Gamma}}(\overline{x},\overline{v})$.
  From $\lambda^k\in\mathcal{N}_K(g(x^k))$ and the first order
  expansion of $g$ at $\overline{x}$, it holds that
  \[
    \overline{\lambda}+t_k\mu^k=\lambda^k\in\mathcal{N}_K(g(\overline{x})+t_k(g'(\overline{x})d^k+o(t_k)/t_k)).
  \]
  That is, $(g(\overline{x})+t_k(g'(\overline{x})d^k+o(t_k)/t_k),\overline{\lambda}+t_k\mu^k)\in{\rm gph}\mathcal{N}_K$.
  Along with $(g(\overline{x}),\overline{\lambda})\in{\rm gph}\mathcal{N}_K$,
  we have $(g'(\overline{x})d,\mu)\in\mathcal{T}_{{\rm gph}\mathcal{N}_K}(g(\overline{x}),\lambda)$
  or equivalently $\mu\in D\mathcal{N}_K(g(\overline{x})|\lambda)(g'(\overline{x})d)$.
 \end{proof}

  From Proposition \ref{lestimate} and \ref{uestimate},
  we get the following characterization for the graphical
  derivative of the mapping $\mathcal{N}_{\Gamma}$
  without requiring the nondegeneracy of $\overline{x}$ as in
  \cite{Gfrerer17,Mordu151}.
  \begin{theorem}\label{festimate}
   Consider an arbitrary $(\overline{x},\overline{v})\in{\rm gph}\mathcal{N}_{\Gamma}$.
   Suppose that $\mathcal{G}$ is metrically subregular at $\overline{x}$ for the origin.
   If $\mathcal{M}_{\overline{x}}$ is isolated calm at $\overline{v}$
   for some $\overline{\lambda}\in\mathcal{M}_{\overline{x}}(\overline{v})$, then
   \begin{equation*}
   \!\mathcal{T}_{{\rm gph}\mathcal{N}_{\Gamma}}(\overline{x},\overline{v})
   \subseteq\Big\{(d,w)\in\mathbb{X}\times\mathbb{X}\ |\ w\in \nabla^2\langle\overline{\lambda},g\rangle(\overline{x})d+\nabla\!g(\overline{x})
     D\mathcal{N}_K(g(\overline{x})|\overline{\lambda})(g'(\overline{x})d)\Big\}.
   \end{equation*}
   If, in addition, the mapping $\Phi$ is metrically subregular at
   $(\overline{x},\overline{\lambda},\overline{v})$ for the origin, then
   \begin{equation}\label{Tcone-final1}
   \!\mathcal{T}_{{\rm gph}\mathcal{N}_{\Gamma}}(\overline{x},\overline{v})
   =\Big\{(d,w)\in\mathbb{X}\times\mathbb{X}\ |\ w\in \nabla^2\langle\overline{\lambda},g\rangle(\overline{x})d+\nabla\!g(\overline{x})
     D\mathcal{N}_K(g(\overline{x})|\overline{\lambda})(g'(\overline{x})d)\Big\}.
   \end{equation}
  \end{theorem}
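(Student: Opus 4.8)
The plan is to assemble the two inclusions directly from the estimations already established, namely Proposition \ref{lestimate} and Proposition \ref{uestimate}, after first recording that the isolated calmness hypothesis forces the multiplier set to be a singleton. The first displayed inclusion is literally the conclusion of Proposition \ref{uestimate}, whose standing hypotheses—metric subregularity of $\mathcal{G}$ at $\overline{x}$ for $0$ together with isolated calmness of $\mathcal{M}_{\overline{x}}$ at $\overline{v}$ for $\overline{\lambda}$—coincide with those assumed here. Hence the first assertion of the theorem requires nothing beyond invoking that proposition.

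For the equality, the reduction I would establish first is that $\mathcal{M}_{\overline{x}}(\overline{v})=\{\overline{\lambda}\}$. By Proposition \ref{prop-Mx}, the isolated calmness of $\mathcal{M}_{\overline{x}}$ at $\overline{v}$ for $\overline{\lambda}$ is equivalent to the SRCQ for the system $g(x)\in K$ at $\overline{x}$ with respect to $\overline{\lambda}$, and the SRCQ in turn yields uniqueness of the multiplier. This singleton property is exactly what renders the single-multiplier right-hand side of \eqref{Tcone-final1} well posed and independent of the choice of $\overline{\lambda}$.

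With $\mathcal{M}_{\overline{x}}(\overline{v})$ reduced to $\{\overline{\lambda}\}$, both the hypotheses and the conclusion of Proposition \ref{lestimate} specialize to the single point $\overline{\lambda}$: the requirement there that $\Phi$ be metrically subregular at every $(\overline{x},\lambda,\overline{v})$ with $\lambda\in\mathcal{M}_{\overline{x}}(\overline{v})$ becomes precisely the added hypothesis that $\Phi$ be metrically subregular at $(\overline{x},\overline{\lambda},\overline{v})$, and the union over $\lambda\in\mathcal{M}_{\overline{x}}(\overline{v})$ in its conclusion collapses to the single set indexed by $\overline{\lambda}$. This delivers the reverse inclusion
\[
 \mathcal{T}_{{\rm gph}\mathcal{N}_{\Gamma}}(\overline{x},\overline{v})
 \supseteq\Big\{(d,w)\ |\ w\in\nabla^2\langle\overline{\lambda},g\rangle(\overline{x})d
  +\nabla g(\overline{x})D\mathcal{N}_K(g(\overline{x})|\overline{\lambda})(g'(\overline{x})d)\Big\},
\]
which, combined with the upper inclusion from the first part, gives the equality \eqref{Tcone-final1}.

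The argument is essentially bookkeeping, since the two genuine estimations are already in hand; the only point demanding care—and the step I regard as the crux rather than a true obstacle—is the observation, via Proposition \ref{prop-Mx}, that isolated calmness does double duty: it supplies the upper bound through Proposition \ref{uestimate} and simultaneously collapses the union in the lower bound of Proposition \ref{lestimate} to the same single multiplier. No additional limiting passages or metric-subregularity estimates are needed beyond those already packaged in Propositions \ref{lestimate} and \ref{uestimate}.
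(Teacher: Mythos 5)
Your proposal is correct and is essentially the paper's own argument: the paper derives Theorem \ref{festimate} precisely by combining Proposition \ref{uestimate} (the upper inclusion) with Proposition \ref{lestimate} (the lower inclusion), where the union in the latter collapses because isolated calmness of $\mathcal{M}_{\overline{x}}$ forces $\mathcal{M}_{\overline{x}}(\overline{v})=\{\overline{\lambda}\}$ via the SRCQ, exactly as you observe (the paper records this singleton fact inside the proof of Proposition \ref{uestimate}).
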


  By combining Theorem \ref{festimate} and Remark \ref{remark-critical},
  we also have the following conclusion.
  \begin{corollary}\label{estimate-cor}
   Consider an arbitrary $(\overline{x},\overline{v})\in{\rm gph}\mathcal{N}_{\Gamma}$.
   Suppose that $\mathcal{G}$ is metrically subregular at $\overline{x}$ for the origin.
   If $\mathcal{M}_{\overline{x}}$ is isolated calm at $\overline{v}$
   for some $\overline{\lambda}\in\mathcal{M}_{\overline{x}}(\overline{v})$ and
   the mapping $\Phi$ is metrically subregular at $(\overline{x},\overline{\lambda},\overline{v})$
   for the origin, then it holds that
   \begin{align}\label{Tcone-final2}
   \!\mathcal{T}_{{\rm gph}\mathcal{N}_{\Gamma}}(\overline{x},\overline{v})
    &=\left\{(d,w)\in\mathbb{X}\times\mathbb{X}\ \Big|
    \left.\begin{array}{ll}
     w\in\!\nabla^2\langle \overline{\lambda},g\rangle(\overline{x})d
         +\frac{1}{2}\nabla g(\overline{x})\nabla\Upsilon(g'(\overline{x})d)\!\\
         \qquad +\nabla g(\overline{x})\mathcal{N}_{\mathcal{C}_{K}(g(\overline{x}),\overline{\lambda})}(g'(\overline{x})d)
     \end{array}\right.\!\right\}.
  \end{align}
  \end{corollary}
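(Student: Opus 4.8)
The plan is to start from the exact characterization of $\mathcal{T}_{{\rm gph}\mathcal{N}_{\Gamma}}(\overline{x},\overline{v})$ already supplied by the equality \eqref{Tcone-final1} in Theorem \ref{festimate}, whose hypotheses coincide verbatim with those of the corollary, and then simply to rewrite the term $\nabla g(\overline{x})D\mathcal{N}_K(g(\overline{x})|\overline{\lambda})(g'(\overline{x})d)$ by means of the set identity recorded in Remark \ref{remark-critical}. Thus the corollary is essentially the substitution of one set equality into another, and the work lies in justifying that substitution on the whole space $\mathbb{X}\times\mathbb{X}$.

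First I would fix an arbitrary pair $(d,w)$ and observe, via Lemma \ref{dir-proj}, that the set $D\mathcal{N}_K(g(\overline{x})|\overline{\lambda})(g'(\overline{x})d)$ is nonempty only when $g'(\overline{x})d\in\mathcal{C}_{K}(g(\overline{x}),\overline{\lambda})$. By \eqref{critial-equa1} this membership is equivalent to $d\in\mathcal{C}_{\Gamma}(\overline{x},\overline{v})$. Consequently the right-hand side of \eqref{Tcone-final1} is automatically empty whenever $d\notin\mathcal{C}_{\Gamma}(\overline{x},\overline{v})$; and so is the right-hand side of \eqref{Tcone-final2}, because for such $d$ the point $g'(\overline{x})d$ lies outside $\mathcal{C}_{K}(g(\overline{x}),\overline{\lambda})$, whence $\mathcal{N}_{\mathcal{C}_{K}(g(\overline{x}),\overline{\lambda})}(g'(\overline{x})d)=\emptyset$. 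So both sets coincide off the critical cone, and it remains to treat $d\in\mathcal{C}_{\Gamma}(\overline{x},\overline{v})$.

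For $d\in\mathcal{C}_{\Gamma}(\overline{x},\overline{v})$ I would invoke Remark \ref{remark-critical}, whose standing hypothesis is precisely the isolated calmness of $\mathcal{M}_{\overline{x}}$ at $\overline{v}$ for $\overline{\lambda}$ assumed here (through Proposition \ref{critical-normal-prop2}). It furnishes the set identity
\[
  \nabla g(\overline{x})D\mathcal{N}_{K}(g(\overline{x})|\overline{\lambda})(g'(\overline{x})d)
  =\frac{1}{2}\nabla g(\overline{x})\nabla\Upsilon(g'(\overline{x})d)
   +\nabla g(\overline{x})\mathcal{N}_{\mathcal{C}_{K}(g(\overline{x}),\overline{\lambda})}(g'(\overline{x})d),
\]
with $\Upsilon(\cdot)=-\sigma(\overline{\lambda},\mathcal{T}_{K}^{2}(g(\overline{x}),\cdot))$. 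Substituting this into the description of $w$ in \eqref{Tcone-final1} and retaining the common term $\nabla^2\langle\overline{\lambda},g\rangle(\overline{x})d$ yields exactly the right-hand side of \eqref{Tcone-final2}, completing the argument.

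The step I expect to require the most care is the empty-set bookkeeping: one must verify that the two candidate right-hand sides agree not only on $\mathcal{C}_{\Gamma}(\overline{x},\overline{v})$, where Remark \ref{remark-critical} directly applies, but also outside it, where both need to be shown vacuous. The equivalence $g'(\overline{x})d\in\mathcal{C}_{K}(g(\overline{x}),\overline{\lambda})\Leftrightarrow d\in\mathcal{C}_{\Gamma}(\overline{x},\overline{v})$ from \eqref{critial-equa1} is the linchpin that makes this consistent; once it is in hand, the remainder is a routine substitution of one set identity into another.
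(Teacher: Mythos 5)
Your proposal is correct and follows exactly the route the paper takes: the corollary is obtained by substituting the identity of Remark \ref{remark-critical} (valid under the isolated calmness of $\mathcal{M}_{\overline{x}}$ via Proposition \ref{critical-normal-prop2}) into the exact formula \eqref{Tcone-final1} of Theorem \ref{festimate}. Your additional empty-set bookkeeping for $d\notin\mathcal{C}_{\Gamma}(\overline{x},\overline{v})$, using \eqref{critial-equa1} and Lemma \ref{dir-proj}, is a careful justification of a step the paper leaves implicit, but it does not change the argument.
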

 \begin{remark}\label{remark-main}
  {\bf(a)} The expression of the graphical derivative in \eqref{Tcone-final1} is
  same as the one derived in \cite[Theorem 2]{Gfrerer16-MOR}, but compared with
  that of \cite[Theorem 5.2]{Mordu151} an additional term
  $\frac{1}{2}\nabla g(\overline{x})\nabla\Upsilon(g'(\overline{x})d)$
  appears since the PDC is not imposed on $K$. Compared with
  the one in \cite[Corollary 5.4]{Gfrerer171}, unless the uniqueness of
  the multiplier set and the closedness of
  $\mathcal{R}_{\mathcal{N}_{\Gamma}(\overline{x})}(\overline{v})$
  are required there, our formula \eqref{Tcone-final1} or \eqref{Tcone-final2}
  is convenient for use.

  \medskip
  \noindent
  {\bf(b)} By Remark \ref{low-estimate-remark}, we know that
  \cite[Theorem 3.3]{Gfrerer171} and Proposition \ref{critical-normal-prop2}
  imply that the equality \eqref{Tcone-final1} or \eqref{Tcone-final2}
  actually holds without the metric subregularity of $\Phi$.
 \end{remark}

 \subsection{Conditions for metric subregularity of $\Phi$}\label{subsec3.2}

  As pointed out in Remark \ref{remark-main}(b), due to \cite[Theorem 3.3]{Gfrerer171},
  the exact characterization of the graphical derivative of $\mathcal{N}_{\Gamma}$
  in formula \eqref{Tcone-final1} or \eqref{Tcone-final2} does not
  require the metric subregularity of $\Phi$, but we think that
  it has a separate value. So, in this part we focus on
  the metric subregularity of $\Phi$. When $K$ and $g$
  are both polyhedral, from the crucial result due to Robinson \cite{Robinson81},
  the metric subregularity of $\Phi$ automatically holds.
  When either $K$ or $g$ is non-polyhedral, the metric
  subregularity of $\Phi$ at $(\overline{x},\overline{\lambda},\overline{v})$
  for the origin is implied by the isolated calmness of $\Phi^{-1}$
  at the origin for $(\overline{x},\overline{\lambda},\overline{v})$ or by
  the Aubin property of $\Phi^{-1}$. By Proposition \ref{Phi-derivative} and
  Lemma \ref{chara-icalm}, the former is equivalent to requiring
  \begin{equation}\label{implication}
   \left\{\begin{array}{ll}
   \nabla^2\langle\overline{\lambda},g\rangle(\overline{x})\Delta x
      +\nabla g(\overline{x})\Delta\lambda-\!\Delta v=0;\\
   \Delta\lambda\in D\mathcal{N}_K(g(\overline{x})|\overline{\lambda})(g'(\overline{x})\Delta x)
   \end{array}\right.\Longrightarrow (\Delta x,\Delta\lambda,\Delta v)=(0,0,0),
  \end{equation}
  which is almost impossible due to the free $\Delta v$. We next focus on
  the latter. It is a little surprising to us that the Aubin property
  of $\Phi^{-1}$ is equivalent to the nondegeneracy.

  \begin{proposition}\label{property-MG}
   Consider an arbitrary $(\overline{x},\overline{v})\!\in{\rm gph}\mathcal{N}_{\Gamma}$
   with $\mathcal{M}_{\overline{x}}(\overline{v})\!\ne\emptyset$.
   Let $\overline{\lambda}\in\!\mathcal{M}_{\overline{x}}(\overline{v})$.
   The multifunction $\Phi^{-1}$ has the Aubin property at the origin
   for $(\overline{x},\overline{\lambda},\overline{v})$ if and only if
   \begin{equation}\label{coderiv-imply}
   {\rm Ker}(\nabla g(\overline{x}))\cap D^*\mathcal{N}_{K}(g(\overline{x})|\overline{\lambda})(0)=\{0\}.
   \end{equation}
   In particular, condition \eqref{coderiv-imply} is equivalent to
   the nondegeneracy of $\overline{x}$ w.r.t. the set $K$ and
   the mapping $\Xi$, where $\Xi$ is same as the one in Lemma \ref{lemma-reduction}.
  \end{proposition}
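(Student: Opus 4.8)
The plan is to characterize the Aubin property of $\Phi^{-1}$ through a coderivative criterion and then to evaluate the resulting coderivative by means of the $C^2$-reduction of $K$. First I would invoke Lemma \ref{chara-Aubin} together with the inverse rule for coderivatives (equivalently, Remark \ref{remark21} and the Mordukhovich criterion for metric regularity of $\Phi$): $\Phi^{-1}$ has the Aubin property at the origin for $(\overline{x},\overline{\lambda},\overline{v})$ iff $0\in D^*\Phi((\overline{x},\overline{\lambda},\overline{v})|(0,0))(p)$ forces $p=0$. Everything then reduces to computing $\mathcal{N}_{{\rm gph}\Phi}(\overline{x},\overline{\lambda},\overline{v},0,0)$.

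To do this I would separate the two components of $\Phi$. Writing $\Phi_1(x,\lambda,v)=-v+\nabla g(x)\lambda$ (smooth) and $\Phi_2(x,\lambda)=g(x)-\Pi_K(g(x)+\lambda)$, the diffeomorphism $T(x,\lambda,v,\xi,\eta):=(x,\lambda,v,\xi+v-\nabla g(x)\lambda,\eta)$ carries ${\rm gph}\Phi$ onto the cylinder $\{(x,\lambda,v,0,\eta):(x,\lambda,\eta)\in{\rm gph}\Phi_2\}$, in which $v$ is free; transporting normals through $T'(\cdot)^*$ reduces the task to $\mathcal{N}_{{\rm gph}\Phi_2}$. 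The central observation is that, by the projection characterization of the normal cone to the convex set $K$, the equation $\eta=g(x)-\Pi_K(g(x)+\lambda)$ holds iff $\lambda+\eta\in\mathcal{N}_K(g(x)-\eta)$, so ${\rm gph}\Phi_2=\Psi^{-1}({\rm gph}\mathcal{N}_K)$ for the $C^2$ map $\Psi(x,\lambda,\eta):=(g(x)-\eta,\lambda+\eta)$. Since $\Psi'(\overline{x},\overline{\lambda},0)$ is onto (checked directly), the preimage formula for normal cones (\cite[Exercise 6.7]{RW98}) applies with equality and expresses $\mathcal{N}_{{\rm gph}\Phi_2}$ through $\mathcal{N}_{{\rm gph}\mathcal{N}_K}$. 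Unwinding the two reductions, I expect $0\in D^*\Phi(\cdots)(p)$ to force $p=(0,a^*)$ with $a^*\in{\rm Ker}(\nabla g(\overline{x}))\cap D^*\mathcal{N}_K(g(\overline{x})|\overline{\lambda})(0)$, which gives the stated equivalence with \eqref{coderiv-imply}.

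For the ``in particular'' clause I would show $D^*\mathcal{N}_K(g(\overline{x})|\overline{\lambda})(0)={\rm range}(\nabla\Xi(g(\overline{x})))$ via the reduction of Lemma \ref{lemma-reduction}. Writing $\overline{y}=g(\overline{x})$ and $\overline{\lambda}=\nabla\Xi(\overline{y})\overline{u}$ with $\overline{u}\in\mathcal{N}_D(0)=D^{\circ}$, the inclusion ``$\supseteq$'' comes from approaching $\overline{\lambda}$ along $\lambda_k=\nabla\Xi(\overline{y})u_k$ with $u_k\in{\rm int}(D^{\circ})$: near $(\overline{y},\lambda_k)$ the set ${\rm gph}\mathcal{N}_K$ is a $C^1$ manifold whose tangent space has $y$-component ${\rm Ker}(\Xi'(\overline{y}))$, so $\{s:(s,0)\in\widehat{\mathcal{N}}_{{\rm gph}\mathcal{N}_K}(\overline{y},\lambda_k)\}={\rm range}(\nabla\Xi(\overline{y}))$, and passing to the limit yields the inclusion. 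The decisive point is the vertex identity $D^*\mathcal{N}_D(0|\overline{u})(0)=\mathbb{Z}$, which holds because for $u\in{\rm int}(D^{\circ})$ one has $\mathcal{N}_D^{-1}(u)=\{0\}$, so ${\rm gph}\mathcal{N}_D$ is locally $\{0\}\times\mathbb{Z}$ and its regular normals have vanishing second component. The inclusion ``$\subseteq$'' follows from $\mathcal{N}_K(y)\subseteq{\rm range}(\nabla\Xi(y))$ for $y$ near $\overline{y}$ by transporting the normal cone of ${\rm gph}\mathcal{N}_K$ back through the reduction diffeomorphism $(y,u)\mapsto(y,\nabla\Xi(y)u)$.

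Finally I would close by pure linear algebra: taking orthogonal complements, ${\rm Ker}(\nabla g(\overline{x}))\cap{\rm range}(\nabla\Xi(\overline{y}))=\{0\}$ is equivalent to $g'(\overline{x})\mathbb{X}+{\rm Ker}(\Xi'(\overline{y}))=\mathbb{Y}$, and since the reduction gives ${\rm lin}[\mathcal{T}_K(\overline{y})]={\rm Ker}(\Xi'(\overline{y}))$, this is exactly the nondegeneracy \eqref{Nondegeneracy}. I expect the main obstacle to be the coderivative computation of the third step---specifically the reverse inclusion and the clean justification of the vertex identity $D^*\mathcal{N}_D(0|\overline{u})(0)=\mathbb{Z}$ at a possibly boundary multiplier $\overline{u}$---since the first part is a fairly mechanical application of the preimage and inverse rules once the reformulation ${\rm gph}\Phi_2=\Psi^{-1}({\rm gph}\mathcal{N}_K)$ is in hand.
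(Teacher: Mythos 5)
Your proposal is correct, and it takes a genuinely different route from the paper in both halves. For the equivalence with \eqref{coderiv-imply}, the paper never leaves the projection operator: it computes $D^*\Phi$ by the sum rule (Lemma \ref{calculus-rule}, \cite[Theorem 1.62]{Mordu06}), treats $\Pi_K\circ h$ with $h(x,\lambda,v)=g(x)+\lambda$ by the surjective chain rule \cite[Theorem 1.66]{Mordu06}, and only at the very end converts $D^*\Pi_K$ into $D^*\mathcal{N}_K$ through \eqref{Proj-normal}. Your reformulation ${\rm gph}\Phi_2=\Psi^{-1}({\rm gph}\mathcal{N}_K)$ with $\Psi(x,\lambda,\eta)=(g(x)-\eta,\lambda+\eta)$, whose derivative is onto, reaches $\mathcal{N}_{{\rm gph}\mathcal{N}_K}$ in one application of the preimage rule \cite[Exercise 6.7]{RW98} and bypasses $D^*\Pi_K$ entirely; unwinding your two reductions does force the $v$-dual multiplier to vanish, which kills the Hessian and $g'(\overline{x})$ terms and leaves exactly $p=(0,a^*)$ with $a^*\in{\rm Ker}(\nabla g(\overline{x}))\cap D^*\mathcal{N}_K(g(\overline{x})|\overline{\lambda})(0)$, so this half is sound and arguably cleaner than the paper's. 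For the ``in particular'' clause the two proofs diverge more sharply: the paper proves only that nondegeneracy implies \eqref{coderiv-imply}, via \cite[Theorem 3.4]{Mordu01} (which gives $D^*\mathcal{N}_K(g(\overline{x})|\overline{\lambda})(0)\subseteq{\rm Range}(\nabla\Xi(g(\overline{x})))$), and it disposes of the converse by citing the arguments of \cite[Theorem 1]{KK13}; you instead establish the exact identity $D^*\mathcal{N}_K(g(\overline{x})|\overline{\lambda})(0)={\rm Range}(\nabla\Xi(g(\overline{x})))$ and conclude by linear algebra, so your ``$\supseteq$'' inclusion (approach $\overline{\lambda}$ along $\lambda_k=\nabla\Xi(g(\overline{x}))u_k$ with $u_k\in{\rm int}(D^{\circ})$, where the graph is locally the $C^1$ manifold $\{\Xi(y)=0\}$ fibered by the multiplier, then pass to the limit) is a self-contained substitute for the KK13 citation, and it is valid because $D$ pointed makes ${\rm int}(D^{\circ})$ nonempty, hence dense in $D^{\circ}$, so boundary multipliers are reachable. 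The one step you should tighten is your ``$\subseteq$'' inclusion: $(y,u)\mapsto(y,\nabla\Xi(y)u)$ is not a diffeomorphism of the ambient space but only an immersion onto a lower-dimensional manifold, so ``transporting normals'' is not a one-line change of variables. It does go through if you pull regular normals at nearby graph points back through this smooth map (the pullback inequality for regular normals needs only Lipschitz continuity), apply the preimage rule to $\{(y,u)\,|\,(\Xi(y),u)\in{\rm gph}\mathcal{N}_D\}$, and use the injectivity of $\nabla\Xi(g(\overline{x}))$ to extract a convergent $p_k$ with $s=\lim\nabla\Xi(y_k)p_k$; alternatively you can simply quote \cite[Theorem 3.4]{Mordu01} at this point, exactly as the paper does, and keep the rest of your argument unchanged.
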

  \begin{proof}
  We first characterize the coderivative of $\Phi$ at $(\overline{x},\overline{\lambda},\overline{v})$.
  Notice that
  \[
    \Phi(x,\lambda,v)=\left(\begin{matrix}
                      \Phi_1(x,\lambda,v)\\ \Phi_2(x,\lambda,v)
                      \end{matrix}\right)\ \ {\rm with}\
    \left\{\begin{array}{ll}
     \Phi_1(x,\lambda,v):=-v+\nabla g(x)\lambda;\\
     \Phi_2(x,\lambda,v):=g(x)-\Pi_K(g(x)+\lambda).
     \end{array}\right.
  \]
  Fix an arbitrary $(\Delta \xi,\Delta\eta)\in\mathbb{X}\times\mathbb{Y}$.
  By using Lemma \ref{calculus-rule} in Appendix, we calculate that
  \[
   D^*\Phi(\overline{x},\overline{\lambda},\overline{v})(\Delta \xi,\Delta\eta)
   =\left[\begin{matrix}
           \nabla^2\langle\overline{\lambda},g\rangle(\overline{x})\Delta\xi\\
            g'(\overline{x})\Delta\xi\\
            -\Delta\xi
       \end{matrix}\right]
       +D^*\Phi_2(\overline{x},\overline{\lambda},\overline{v})(\Delta\eta).
  \]
  From the definition of $\Phi_2(x,\lambda,v)$ and \cite[Theorem 1.62]{Mordu06},
  it follows that
  \[
    D^*\Phi_2(\overline{x},\overline{\lambda},\overline{v})(\Delta\eta)
    =\left(\begin{matrix}
      \nabla\!g(\overline{x})\Delta\eta\\
       0\\ 0
       \end{matrix}\right)
    +D^*(-\Pi_K\circ h)(\overline{x},\overline{\lambda},\overline{v})\Delta\eta
  \]
  where $h(x,\lambda,v):=g(x)+\lambda$ for $(x,\lambda,v)\in\mathbb{X}\times\mathbb{Y}\times\mathbb{X}$.
  Notice that $h'(\overline{x},\overline{\lambda},\overline{v})\!:
  \mathbb{X}\times\mathbb{Y}\times\mathbb{X}\to\mathbb{Y}$ is surjective.
  By applying \cite[Theorem 1.66]{Mordu06}, we obtain
  \begin{equation*}
    D^*(\Pi_K\circ h)(\overline{x},\overline{\lambda},\overline{v})
    =\left(\begin{matrix}
      \nabla\!g(\overline{x})\\
       I\\ 0
       \end{matrix}\right)D^*\Pi_K(g(\overline{x})\!+\!\overline{\lambda}).
  \end{equation*}
  In addition, it is easy to check that $(\Delta x,\Delta\lambda,\Delta v)\in
  D^*(-\Pi_K\circ h)(\overline{x},\overline{\lambda},\overline{v})(\Delta \eta)$
  if and only if $(\Delta x,\Delta\lambda,\Delta v)\in
  D^*(\Pi_K\circ h)(\overline{x},\overline{\lambda},\overline{v})(-\Delta \eta)$.
  Together with the last three equations,
  \[
   D^*\Phi(\overline{x},\overline{\lambda},\overline{v})(\Delta \xi,\Delta\eta)
   \!=\!\left[\begin{matrix}
           \nabla^2\langle\overline{\lambda},g\rangle(\overline{x})\Delta\xi
           \!+\!\nabla\!g(\overline{x})\Delta\eta\\
            g'(\overline{x})\Delta\xi\\
            -\Delta\xi
       \end{matrix}\right]
       +\!\left(\begin{matrix}
      \nabla\!g(\overline{x})\\
       I\\ 0
       \end{matrix}\right)\!D^*\Pi_K(g(\overline{x})\!+\!\overline{\lambda})(-\Delta\eta).
  \]
  So, $(\Delta x,\Delta\lambda,\Delta v,\Delta \xi,\Delta\eta)
  \in\!\mathcal{N}_{{\rm gph}\Phi}(\overline{x},\overline{\lambda},\overline{v},0,0)$
  iff $\exists\Delta\zeta\in D^*\Pi_{K}(g(\overline{x})\!+\!\overline{\lambda})(\Delta\eta)$ such that
  \begin{equation*}\label{temp-system}
   \left\{\begin{array}{ll}
     \Delta x+\nabla^2\langle\overline{\lambda},g\rangle(\overline{x})\Delta\xi+\nabla\!g(\overline{x})\Delta\eta
     =\nabla g(\overline{x})\Delta\zeta,\\
     \Delta\lambda+g'(\overline{x})\Delta\xi=\Delta\zeta,\,\Delta v=\Delta\xi.
    \end{array}\right.
  \end{equation*}
  Consequently, $(\Delta \xi,\Delta\eta)\in D^*\Phi^{-1}((0,0)|(\overline{x},\overline{\lambda},\overline{v}))
  (0,0,0)$ if and only if $(\Delta \xi,\Delta\eta)$ satisfies
  \begin{equation*}
    \left\{\begin{array}{ll}
     \Delta\xi=0,\,\nabla\!g(\overline{x})\Delta\eta=0,\\
     0\in D^*\Pi_{K}(g(\overline{x})\!+\!\overline{\lambda})(\Delta\eta).
     \end{array}\right.
  \end{equation*}
  By Lemma \ref{chara-Aubin}, $\Phi^{-1}$ has the Aubin property at the origin
  for $(\overline{x},\overline{\lambda},\overline{v})$ if and only if
  \begin{equation}\label{temp-imply}
    \left\{\begin{array}{ll}
     \nabla\!g(\overline{x})\Delta\eta=0,\\
     0\in D^*\Pi_{K}(g(\overline{x})\!+\!\overline{\lambda})(\Delta\eta)
     \end{array}\right.
     \Longrightarrow \Delta\eta=0.
  \end{equation}
  From \cite[Exercise 6.7]{RW98} and the definition of coderivative,
  for any $(u',v')\in\mathbb{Y}\times\mathbb{Y}$,
  \begin{equation}\label{Proj-normal}
   u'\in D^*\mathcal{N}_{K}(g(\overline{x})|\overline{\lambda})(v')
   \Longleftrightarrow
   -v'\in D^*\Pi_{K}(g(\overline{x})\!+\!\overline{\lambda})(-u'\!-v').
  \end{equation}
  This show that the implication in \eqref{temp-imply} can be equivalently written as
  the one in \eqref{coderiv-imply}.

  \medskip

  Now we pay our attention to the second part.
  Let $\overline{x}$ be a nondegenerate point of $g$ w.r.t $K$ and $\Xi$.
  From \cite[Definition 4.70]{BS00},
  \(
    g'(\overline{x})\mathbb{X}+{\rm Ker}\big[\Xi'(g(\overline{x}))\big]=\mathbb{Y},
  \)
  or equivalently
  \begin{equation}\label{Nondegenerate}
    {\rm Ker}(\nabla g(\overline{x}))\cap{\rm Range}(\nabla\Xi(g(\overline{x})))=\{0\}.
  \end{equation}
  Fix an arbitrary $\Delta u\in{\rm Ker}(\nabla g(\overline{x}))
  \cap D^*\mathcal{N}_{K}(g(\overline{x})|\overline{\lambda})(0)$.
  Since $\overline{\lambda}\in\mathcal{N}_K(g(\overline{x}))$,
  by the reducibility assumption for $K$ and Lemma \ref{lemma-reduction},
  there exists a unique $\overline{\mu}\in\mathcal{N}_{D}(\Xi(g(\overline{x})))$ such that
  \(
    \overline{\lambda}=\nabla\Xi(g(\overline{x}))\overline{\mu}.
  \)
  In addition, from $\Delta u\in D^*\mathcal{N}_{K}(g(\overline{x})|\overline{\lambda})(0)$
  and \cite[Theorem 3.4]{Mordu01} with $\psi=\delta_D(\cdot)$ and $h(\cdot)=\Xi(\cdot)$,
  there exists $\Delta\mu\in D^*\mathcal{N}_{D}(\Xi(g(\overline{x}))|\overline{\lambda})(0)$
  such that
  \[
    \Delta u=\nabla\Xi(g(\overline{x}))\Delta\mu.
  \]
  Along with $\nabla\!g(\overline{x})\Delta u=0$, we get
  $\nabla\!g(\overline{x})\nabla\Xi(g(\overline{x}))\Delta\mu=0$,
  which is equivalent to saying
  \[
    \nabla\Xi(g(\overline{x}))\Delta\mu\in{\rm Ker}(\nabla g(\overline{x}))
    \cap{\rm Range}(\nabla\Xi(g(\overline{x}))).
  \]
  From equation \eqref{Nondegenerate}, it follows that $\nabla\Xi(g(\overline{x}))\Delta\mu=0$.
  By the surjectivity of $\Xi'(g(\overline{x}))$, we get $\Delta\mu=0$.
  Consequently, $\Delta u=0$, and condition \eqref{coderiv-imply} is satisfied.
  Conversely, assume that $\Phi^{-1}$ has the Aubin property at
  $(\overline{x},\overline{\lambda},\overline{v})$ for the origin.
  Notice that $\Phi^{-1}$ is exactly
  \[
    \Sigma(a,b):=\Big\{(x,\lambda,v)\in\mathbb{X}\times\mathbb{Y}\times\mathbb{X}\ |\ \Phi(x,\lambda,v)=(a,b)\Big\}.
  \]
  By following the same arguments as those for \cite[Theorem 1]{KK13},
  $\overline{x}$ is nondegenerate.
 \end{proof}

  Motivated by the recent work \cite{Gfrerer11,Gfrerer13,Gfrerer16}
  for the metric subregularity, we next provide a condition
  for the metric subregularity of $\Phi$ by means of the directional
  limiting coderivative of $\mathcal{N}_K$. In order to achieve this goal,
  we need the following lemma.
 \begin{lemma}\label{WtKKT}
  Let $\widetilde{\Phi}\!:\mathbb{X}\times\mathbb{Y}\times\mathbb{X}\rightrightarrows
  \mathbb{X}\times\mathbb{Y}\times\mathbb{X}$ be the multifunction defined as follows:
  \begin{equation}
   \widetilde{\Phi}(x,\lambda,v)
    :=\left(\begin{matrix}
        -v+\nabla g(x)\lambda\\
           g(x)\\
           \lambda\\
      \end{matrix}\right)
      -\left(\begin{matrix}
        \{0\}\\
         {\rm gph}\mathcal{N}_K
      \end{matrix}\right).
  \end{equation}
  Consider an arbitrary $(\overline{x},\overline{\lambda},\overline{v})\in\Phi^{-1}(0,0)$.
  Then, $\widetilde{\Phi}$ is metrically subregular at $(\overline{x},\overline{\lambda},\overline{v})$
  for the origin if and only if $\Phi$ is metrically subregular at
  $(\overline{x},\overline{\lambda},\overline{v})$ for the origin.
 \end{lemma}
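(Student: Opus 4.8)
The plan is to exploit two facts: the two multifunctions have exactly the same zero set, and their residuals at the origin are equivalent up to fixed multiplicative constants. Since metric subregularity at a point is precisely the assertion that the distance to the zero set is bounded by a constant multiple of the residual there, such an equivalence of residuals transfers the property in both directions.

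First I would record that $\Phi^{-1}(0,0)=\widetilde{\Phi}^{-1}(0,0,0)$. Indeed, $(0,0,0)\in\widetilde{\Phi}(x,\lambda,v)$ forces the first block $-v+\nabla g(x)\lambda=0$ and $(g(x),\lambda)\in{\rm gph}\mathcal{N}_K$, i.e. $v=\nabla g(x)\lambda$ and $\lambda\in\mathcal{N}_K(g(x))$; and $\Phi(x,\lambda,v)=(0,0)$ forces the same pair of conditions via the standard equivalence $g(x)=\Pi_K(g(x)+\lambda)\Leftrightarrow\lambda\in\mathcal{N}_K(g(x))$. Hence the left-hand quantities ${\rm dist}((x,\lambda,v),\Phi^{-1}(0))$ and ${\rm dist}((x,\lambda,v),\widetilde{\Phi}^{-1}(0))$ appearing in the two subregularity inequalities are literally the same.

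The heart of the argument is the residual comparison. Because $\Phi$ is single-valued, ${\rm dist}(0,\Phi(x,\lambda,v))=\|\Phi(x,\lambda,v)\|$, whose two blocks are $-v+\nabla g(x)\lambda$ and $g(x)-\Pi_K(g(x)+\lambda)$. For $\widetilde{\Phi}$ the first block carries the identical value $-v+\nabla g(x)\lambda$ and is untouched by the set subtraction, so with the product norm one has
\[
{\rm dist}(0,\widetilde{\Phi}(x,\lambda,v))^2=\|-v+\nabla g(x)\lambda\|^2+{\rm dist}\big((g(x),\lambda),{\rm gph}\mathcal{N}_K\big)^2 .
\]
Everything therefore reduces to comparing ${\rm dist}((a,b),{\rm gph}\mathcal{N}_K)$ with $\|a-\Pi_K(a+b)\|$ for $(a,b):=(g(x),\lambda)$. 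I would obtain this two-sided estimate from the global parametrization $z\mapsto(\Pi_K(z),z-\Pi_K(z))$ of ${\rm gph}\mathcal{N}_K$ and the nonexpansiveness of $\Pi_K$: testing the feasible point $(\Pi_K(a+b),(a+b)-\Pi_K(a+b))$ gives ${\rm dist}((a,b),{\rm gph}\mathcal{N}_K)\le\sqrt{2}\,\|a-\Pi_K(a+b)\|$, while for any $(\hat{a},\hat{b})\in{\rm gph}\mathcal{N}_K$ the identity $\hat{a}=\Pi_K(\hat{a}+\hat{b})$ together with $\|\Pi_K(\hat a+\hat b)-\Pi_K(a+b)\|\le\|(\hat a-a)+(\hat b-b)\|$ yields $\|a-\Pi_K(a+b)\|\le 2\|a-\hat a\|+\|b-\hat b\|\le\sqrt{5}\,\|(a,b)-(\hat a,\hat b)\|$, and passing to the infimum gives $\|a-\Pi_K(a+b)\|\le\sqrt{5}\,{\rm dist}((a,b),{\rm gph}\mathcal{N}_K)$. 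Combining the two bounds produces constants $0<m\le M$ with $m\,\|\Phi(x,\lambda,v)\|\le{\rm dist}(0,\widetilde{\Phi}(x,\lambda,v))\le M\,\|\Phi(x,\lambda,v)\|$ for every $(x,\lambda,v)$.

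With these two ingredients the equivalence is immediate. Invoking Remark \ref{remark21} to discard the auxiliary $\delta$-ball in Definition \ref{subregular-def}, metric subregularity of $\Phi$ with modulus $\kappa$ on $\mathbb{B}((\overline{x},\overline{\lambda},\overline{v}),\varepsilon)$ gives, on the same ball, ${\rm dist}((x,\lambda,v),\widetilde{\Phi}^{-1}(0))={\rm dist}((x,\lambda,v),\Phi^{-1}(0))\le\kappa\|\Phi(x,\lambda,v)\|\le(\kappa/m)\,{\rm dist}(0,\widetilde{\Phi}(x,\lambda,v))$, i.e. subregularity of $\widetilde{\Phi}$; the converse is symmetric through the other bound. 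The one genuinely delicate point is the residual comparison, and the thing to watch there is that it must hold globally, with no shrinking of the neighborhood --- which is exactly what the global nonexpansiveness of $\Pi_K$ secures; the rest is bookkeeping.
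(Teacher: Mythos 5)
Your proof is correct, and for the harder implication it takes a genuinely different (and cleaner) route than the paper. The paper argues asymmetrically: to pass from subregularity of $\Phi$ to that of $\widetilde{\Phi}$ it selects a minimizing element $(\xi,\eta,\zeta)\in\widetilde{\Phi}(x,\lambda,v)\cap\mathbb{B}((0,0,0),\delta')$, moves to the perturbed multiplier $\lambda'=\lambda-\eta-\zeta$ (for which the projection identity gives exactly $\Phi(x,\lambda',v)=(\xi-\nabla g(x)(\eta+\zeta),\eta)$), applies the subregularity of $\Phi$ at the nearby point $(x,\lambda',v)$, and finishes with the triangle inequality; this costs a constant $\kappa\sqrt{4\gamma^2+3}$ depending on a local bound $\gamma$ for $\|\nabla g\|$. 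For the converse implication the paper uses precisely your feasible point $(\xi,\eta,-\eta)$. What you do differently is to eliminate the perturbation step entirely: nonexpansiveness of $\Pi_K$ together with the identity $\hat a=\Pi_K(\hat a+\hat b)$ for $(\hat a,\hat b)\in{\rm gph}\,\mathcal{N}_K$ yields the global two-sided residual comparison
\[
\tfrac{1}{\sqrt{5}}\,\|\Phi(x,\lambda,v)\|\ \le\ {\rm dist}\big((0,0,0),\widetilde{\Phi}(x,\lambda,v)\big)\ \le\ \sqrt{2}\,\|\Phi(x,\lambda,v)\|,
\]
which, combined with $\Phi^{-1}(0,0)=\widetilde{\Phi}^{-1}(0,0,0)$, makes both implications one-line consequences at the same evaluation point, with universal constants independent of $g$ and of the reference point. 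This is slightly stronger than the lemma itself: it shows the two residuals are everywhere equivalent error measures, not merely that metric subregularity transfers. Your appeal to Remark \ref{remark21} to discard the $\delta$-ball in Definition \ref{subregular-def} is sanctioned by the paper; note that even without it you are covered, since your bound $\|(\xi,\eta,-\eta)\|\le\sqrt{2}\,\|\Phi(x,\lambda,v)\|$ places the feasible point inside any prescribed ball once the residual is small, so the localized form of the definition causes no difficulty either.
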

 \begin{proof}
  Suppose that the mapping $\Phi$ is metrically subregular
  at $(\overline{x},\overline{\lambda},\overline{v})$ for the origin.
  Then, there exist $\varepsilon>0$ and $\kappa>0$ such that for all $(x,\lambda,v)\in\mathbb{B}((\overline{x},\overline{\lambda},\overline{v}),\varepsilon)$,
  \[
    {\rm dist}((x,\lambda,v),\Phi^{-1}(0,0))\le\kappa\|\Phi(x,\lambda,v)\|.
  \]
  To establish the metric subregularity of $\widetilde{\Phi}$ at
  $(\overline{x},\overline{\lambda},\overline{v})$ for the origin, it suffices to
  argue that there exist $\varepsilon'>0,\delta'>0$ and $\kappa'>0$ such that
  for all $(x,\lambda,v)\in\mathbb{B}((\overline{x},\overline{\lambda},\overline{v}),\varepsilon')$,
   \begin{equation}\label{aim-ineq}
   {\rm dist}((x,\lambda,v),\widetilde{\Phi}^{-1}(0,0,0))
   \le\kappa'{\rm dist}((0,0,0),\widetilde{\Phi}(x,\lambda,v)\cap\mathbb{B}((0,0,0),\delta')).
  \end{equation}
  Set $\varepsilon'=\frac{\varepsilon}{2}$ and $\delta'=\frac{\varepsilon}{2}$.
  Fix an arbitrary $(x,\lambda,v)\in\mathbb{B}((\overline{x},\overline{\lambda},\overline{v}),\varepsilon')$.
  It suffices to consider $\widetilde{\Phi}(x,\lambda,v)\cap\mathbb{B}((0,0,0),\delta')\ne\emptyset$.
  Let $(\xi,\eta,\zeta)\in\widetilde{\Phi}(x,\lambda,v)\cap\mathbb{B}((0,0,0),\delta')$ be such that
  \begin{equation}\label{aim-equa31}
    {\rm dist}((0,0,0),\widetilde{\Phi}(x,\lambda,v)\cap\mathbb{B}((0,0,0),\delta'))
    =\|(\xi,\eta,\zeta)\|.
  \end{equation}
  From $(\xi,\eta,\zeta)\in\widetilde{\Phi}(x,\lambda,v)\cap\mathbb{B}((0,0,0),\delta')$,
  it follows that $(\xi',\eta)=\Phi(x,\lambda',v)$ with $\xi'=\xi-\nabla g(x)(\eta+\zeta)$
  and $\lambda'=\lambda-\eta-\zeta$, and moreover,
  $\|(x,\lambda',v)-(\overline{x},\overline{\lambda},\overline{v})\|\le\varepsilon$.
  By the continuity of $\nabla g$, there exists $\gamma>0$ such that
  for all $x\in\mathbb{B}(\overline{x},\varepsilon')$, $\|\nabla g(x)\|\le\gamma$.
  Then,
  \begin{align*}
   &{\rm dist}((x,\lambda,v),\widetilde{\Phi}^{-1}(0,0,0))
   ={\rm dist}((x,\lambda,v),\Phi^{-1}(0,0))\\
    &\le{\rm dist}((x,\lambda',v),\Phi^{-1}(0,0))+\|\lambda-\lambda'\|\\
    &\le\kappa{\rm dist}((0,0),\Phi(x,\lambda',v))+\|\lambda-\lambda'\|\\
   &\le\kappa\|(\xi',\eta)\|+\|\eta+\zeta\|\le\kappa\sqrt{4\gamma^2+3}\|(\xi,\eta,\zeta)\|.
  \end{align*}
  Together with \eqref{aim-equa31} and \eqref{aim-ineq},
  $\widetilde{\Phi}$ is metrically subregular
  at $(\overline{x},\overline{\lambda},\overline{v})$ for the origin.

  \medskip

  Suppose that $\widetilde{\Phi}$ is metrically subregular at $(\overline{x},\overline{\lambda},\overline{v})$
  for the origin. Then there exist $\varepsilon>0$ and $\kappa>0$
  such that for all $(x,\lambda,v)\in\mathbb{B}((\overline{x},\overline{\lambda},\overline{v}),\varepsilon)$,
  \[
    {\rm dist}((x,\lambda,v),\widetilde{\Phi}^{-1}(0,0,0))
    \le\kappa{\rm dist}((0,0,0),\widetilde{\Phi}(x,\lambda,v)).
  \]
  Fix an arbitrary $(x,\lambda,v)\in\mathbb{B}((\overline{x},\overline{\lambda},\overline{v}),\varepsilon)$.
  Write $(\xi,\eta)=\Phi(x,\lambda,v)$. By the expression of $\Phi$,
  it is immediate to have that $(\xi,\eta,-\eta)\in\widetilde{\Phi}(x,\lambda,v)$.
  From the last inequality,
  \begin{align*}
   &{\rm dist}((x,\lambda,v),\Phi^{-1}(0,0))
   ={\rm dist}((x,\lambda,v),\widetilde{\Phi}^{-1}(0,0,0))\\
   &\le\kappa{\rm dist}((0,0,0),\widetilde{\Phi}(x,\lambda,v))\le\|(\xi,\eta,-\eta)\|
   \le\sqrt{2}\kappa\|\Phi(x,\lambda,v)\|
  \end{align*}
  This shows that $\Phi$ is metrically subregular at $(\overline{x},\overline{\lambda},\overline{v})$
  for the origin.
 \end{proof}

 Now applying \cite[Corollary 1]{Gfrerer16-MP} to the multifunction $\widetilde{\Phi}$,
 we have the following result.
 \begin{proposition}\label{weak-cond}
  Consider an arbitrary $(\overline{x},\overline{v})\!\in{\rm gph}\widehat{\mathcal{N}}_{\Gamma}$
  with $\mathcal{M}_{\overline{x}}(\overline{v})\!\ne\emptyset$.
  Let $\overline{\lambda}\in\!\mathcal{M}_{\overline{x}}(\overline{v})$.
  The $\Phi$ is metrically subregular at $(\overline{x},\overline{\lambda},\overline{v})$
  for the origin, if for every $0\ne(\xi,\eta,\zeta)$ with
  \begin{equation}\label{graph-deriv-equa}
   \nabla^2\langle\overline{\lambda},g\rangle(\overline{x})\xi+\!\nabla g(\overline{x})\eta+\zeta=0,
  \end{equation}
  the following implication holds:
   \begin{align}\label{equa-calm-SKKT}
   \left.\begin{matrix}
    \nabla g(\overline{x})\Delta\lambda=0,\\
    (\Delta\lambda,0)\in\mathcal{N}_{{\rm gph}\mathcal{N}_K}
    \big((g(\overline{x}),\overline{\lambda});(g'(\overline{x})\xi,\eta)\big)
    \end{matrix}\right\}
    \Longrightarrow\Delta\lambda=0.
   \end{align}
 \end{proposition}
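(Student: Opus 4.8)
The plan is to reduce the assertion to a point-based (directional) sufficient condition for metric subregularity applied to the auxiliary mapping $\widetilde{\Phi}$ of Lemma \ref{WtKKT}. By that lemma, $\Phi$ is metrically subregular at $(\overline{x},\overline{\lambda},\overline{v})$ for the origin if and only if $\widetilde{\Phi}$ is, so it suffices to establish the metric subregularity of $\widetilde{\Phi}$. The key observation is that $\widetilde{\Phi}$ has the constraint form $\widetilde{\Phi}(x,\lambda,v)=F(x,\lambda,v)-\Omega$ with the continuously differentiable mapping $F(x,\lambda,v):=(-v+\nabla g(x)\lambda,\,g(x),\,\lambda)$ (smoothness coming from $g\in C^2$) and the closed set $\Omega:=\{0\}\times{\rm gph}\mathcal{N}_K$. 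For such a structure, \cite[Corollary 1]{Gfrerer16-MP} supplies the first-order sufficient condition: writing $\overline{w}:=(\overline{x},\overline{\lambda},\overline{v})$, the mapping $\widetilde{\Phi}$ is metrically subregular at $\overline{w}$ for the origin whenever, for every nonzero direction $u=(\xi,\eta,\zeta)$ with $F'(\overline{w})u\in\mathcal{T}_\Omega(F(\overline{w}))$, the only multiplier $\mu$ satisfying $F'(\overline{w})^*\mu=0$ together with $\mu\in\mathcal{N}_\Omega(F(\overline{w});F'(\overline{w})u)$ is $\mu=0$. I would first record that $F(\overline{w})=(0,g(\overline{x}),\overline{\lambda})\in\Omega$, which holds precisely because $\overline{\lambda}\in\mathcal{N}_K(g(\overline{x}))$ and $\overline{v}=\nabla g(\overline{x})\overline{\lambda}$, i.e. $(\overline{x},\overline{\lambda},\overline{v})\in\Phi^{-1}(0,0)$.

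Next I would carry out the two derivative computations that convert this abstract condition into \eqref{equa-calm-SKKT}. Differentiating $F$ at $\overline{w}$ gives $F'(\overline{w})(\xi,\eta,\zeta)=(\nabla^2\langle\overline{\lambda},g\rangle(\overline{x})\xi+\nabla g(\overline{x})\eta-\zeta,\,g'(\overline{x})\xi,\,\eta)$. Since $\mathcal{T}_\Omega(F(\overline{w}))=\{0\}\times\mathcal{T}_{{\rm gph}\mathcal{N}_K}(g(\overline{x}),\overline{\lambda})$, the membership $F'(\overline{w})u\in\mathcal{T}_\Omega(F(\overline{w}))$ forces the first block to vanish, which is exactly equation \eqref{graph-deriv-equa} after the harmless relabeling $\zeta\leftrightarrow-\zeta$ of the free $v$-direction, together with $(g'(\overline{x})\xi,\eta)\in\mathcal{T}_{{\rm gph}\mathcal{N}_K}(g(\overline{x}),\overline{\lambda})$. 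Writing a multiplier as $\mu=(\mu_1,\mu_2,\mu_3)$, the adjoint is $F'(\overline{w})^*\mu=(\nabla^2\langle\overline{\lambda},g\rangle(\overline{x})\mu_1+\nabla g(\overline{x})\mu_2,\,g'(\overline{x})\mu_1+\mu_3,\,-\mu_1)$, so $F'(\overline{w})^*\mu=0$ immediately yields $\mu_1=0$, whence $\mu_3=0$ and $\nabla g(\overline{x})\mu_2=0$.

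It then remains to identify the directional normal cone. Because $\Omega$ is a product and $F(\overline{w})=(0,g(\overline{x}),\overline{\lambda})$ with $F'(\overline{w})u=(0,g'(\overline{x})\xi,\eta)$ along a critical direction, the directional limiting normal cone separates as $\mathcal{N}_\Omega(F(\overline{w});F'(\overline{w})u)=\mathbb{X}\times\mathcal{N}_{{\rm gph}\mathcal{N}_K}((g(\overline{x}),\overline{\lambda});(g'(\overline{x})\xi,\eta))$; this leaves $\mu_1$ unconstrained (consistent with $\mu_1=0$ above) and forces $(\mu_2,\mu_3)\in\mathcal{N}_{{\rm gph}\mathcal{N}_K}((g(\overline{x}),\overline{\lambda});(g'(\overline{x})\xi,\eta))$. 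Combining with $\mu_3=0$ and $\nabla g(\overline{x})\mu_2=0$ and renaming $\mu_2=\Delta\lambda$ reproduces exactly the premise of \eqref{equa-calm-SKKT}, whose conclusion $\Delta\lambda=0$ gives $\mu=0$ as required. Finally, for directions $(\xi,\eta,\zeta)$ satisfying \eqref{graph-deriv-equa} but with $(g'(\overline{x})\xi,\eta)\notin\mathcal{T}_{{\rm gph}\mathcal{N}_K}(g(\overline{x}),\overline{\lambda})$ the directional normal cone is empty, so the implication holds vacuously; this justifies imposing \eqref{equa-calm-SKKT} over all $u$ with \eqref{graph-deriv-equa} rather than only over genuine critical directions. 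The step needing the most care is the separability of the directional limiting normal cone of the product $\Omega$, coupled with bookkeeping of the sign and free-variable conventions in $F'(\overline{w})$ and its adjoint; the remaining manipulations are routine linear algebra.
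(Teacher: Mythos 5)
Your proof is correct and takes exactly the paper's route: the paper's entire argument for this proposition consists of invoking Lemma \ref{WtKKT} and then applying \cite[Corollary 1]{Gfrerer16-MP} to the multifunction $\widetilde{\Phi}$, which is precisely your reduction. The derivative and adjoint computations, the product structure $\mathcal{T}_\Omega(F(\overline{w}))=\{0\}\times\mathcal{T}_{{\rm gph}\mathcal{N}_K}(g(\overline{x}),\overline{\lambda})$ and $\mathcal{N}_\Omega(F(\overline{w});\cdot)=\mathbb{X}\times\mathcal{N}_{{\rm gph}\mathcal{N}_K}(\cdot\,;\cdot)$, and the observation that non-tangent directions make \eqref{equa-calm-SKKT} vacuous are all correct details that the paper leaves implicit.
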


 Since $\mathcal{N}_{{\rm gph}\mathcal{N}_K}((g(\overline{x}),\overline{\lambda});(g'(\overline{x})\xi,\eta))
 \subseteq \mathcal{N}_{{\rm gph}\mathcal{N}_K}(g(\overline{x}),\overline{\lambda})$
 for any $(\xi,\eta)\in\mathbb{X}\times\mathbb{Y}$, the implication in \eqref{equa-calm-SKKT}
 holds under the condition \eqref{coderiv-imply}, but now we can not find an example
 to illustrate that the assumption in Proposition \ref{weak-cond} is really weaker
 than the condition \eqref{coderiv-imply}. We leave this for a future
 research topic.

 \medskip

 To close this part, we take $K=\mathbb{R}_{-}$ for example to illustrate
 that there is no direct relation between the metric subregularity of $\Phi$
 and the calmness of $\mathcal{M}_{\overline{x}}$. Now, $\mathcal{M}_{\overline{x}}$
 is a polyhedral multifunction whether $g$ is polyhedral or not,
 and hence it is calm at each $\overline{v}$ for each
 $\overline{\lambda}\in\mathcal{M}_{\overline{x}}(\overline{v})$
 by \cite{Robinson81}. However, the metric subregularity of $\Phi$ depends
 on the mapping $g$. When $g$ is a linear function, clearly,
 $\Phi$ is metrically subregular at $(\overline{x},\overline{\lambda},\overline{v})$
 for the origin, but when $g$ is nonlinear, $\Phi$ does not necessarily have
 the metric subregularity at $(\overline{x},\overline{\lambda},\overline{v})$;
 for example, when $g(x)=x^2$, the mapping $\Phi$
 corresponding to the system $g(x)\in\mathbb{R}_{-}$ is not metrically
 subregular at $(\overline{x},\overline{\lambda},\overline{v})=(0,1/2,0)$.
 Indeed, by noting that
 \begin{align*}
  \Phi^{-1}(0,0)&=\Big\{(x,\lambda,v)\ |\ v=\nabla g(x)\lambda,\,g(x)={\rm min}(0,g(x)+\lambda)\Big\}\\
  &=\Big\{(x,0,0)\ |\ g(x)<0\big\}\cup\big\{(x,\lambda, v)\ |\ g(x)=0,\lambda\geq 0, v=\nabla g(x)\lambda\Big\}.
 \end{align*}
 Therefore, for any $(x,\lambda,v)\in\mathbb{R}\times\mathbb{R}\times\mathbb{R}$,
 ${\rm dist}((x,\lambda, v),\Phi^{-1}(0,0))= \|(x,v)\|$.
 Take a sequence $(x^k,\lambda^k,v^k)=(1/k,1/2,1/k)$. It is immediate to
 calculate that
 \[
   \lim_{k\to\infty}\frac{\|\Phi(x^k,\lambda^k, v^k)\|}{{\rm dist}((x^k,\lambda^k, v^k),\Phi^{-1}(0,0))}
   =\lim_{k\to\infty}\frac{\|(-v^k+2x^k\lambda^k, (x^k)^2)\|}{\|(x^k,v^k)\|}=0.
 \]
 This shows that $\Phi$ is not metrically subregular at
 $(\overline{x},\overline{\lambda},\overline{v})$ for the origin.

 \section{Application of graphical derivative of $\mathcal{N}_{\Gamma}$}\label{sec4}

  As an application of Theorem \ref{festimate}, we provide an exact characterization
  for the graphical derivative of the solution mapping $\mathcal{S}$ in \eqref{MSmap}
  and its isolated calmness.

 \subsection{Isolated calmness of the solution mapping $\mathcal{S}$}\label{subsec4.1}

  Firstly, we establish the relation between the graphical derivative of $\mathcal{S}$
  and that of the normal cone mapping $\mathcal{N}_\Gamma$.
  To this end, we define a map $\Psi\!:\mathbb{P}\times \mathbb{X}\rightrightarrows\mathbb{X}\times \mathbb{X}$ by
  \begin{equation}\label{Psi-map}
   \Psi(p,x):=\widetilde{F}(p,x)-{\rm gph}\mathcal{N}_\Gamma
   \ \ {\rm with}\ \widetilde{F}(p,x):=(x,-F(p,x)).
  \end{equation}
  Notice that ${\rm gph}\mathcal{S}=\widetilde{F}^{-1}({\rm gph}\mathcal{N}_\Gamma)$.
  By using Lemma \ref{Tcone-lemma}, we have the following result.
 \begin{lemma}\label{DMSmap}
  Consider an arbitrary $(\overline{p},\overline{x})\in{\rm gph}\mathcal{S}$.
  Then, the following inclusion holds
  \[
    \mathcal{T}_{{\rm gph}\mathcal{S}}(\overline{p},\overline{x})
    \!\subseteq\!\Big\{(\Delta p,\Delta x)\in\mathbb{P}\times\mathbb{X}\ |\
    \big(\Delta x,-F'((\overline{p},\overline{x});(\Delta p,\Delta x))\big)
    \in\mathcal{T}_{{\rm gph}\mathcal{N}_\Gamma}(\overline{x},-\!F(\overline{p},\overline{x}))\Big\}.
  \]
  If $\Psi$ is metrically subregular at $(\overline{p},\overline{x})$
  for the origin, the converse conclusion also holds.
 \end{lemma}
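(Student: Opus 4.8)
The plan is to recognize ${\rm gph}\mathcal{S}$ as a composite set of the form $\Theta=H^{-1}(\Delta)$ and then invoke Lemma \ref{Tcone-lemma} essentially verbatim. As already observed right after the definition \eqref{Psi-map}, one has ${\rm gph}\mathcal{S}=\widetilde{F}^{-1}({\rm gph}\mathcal{N}_\Gamma)$, so the roles in Lemma \ref{Tcone-lemma} are played by $H=\widetilde{F}$, $\Delta={\rm gph}\mathcal{N}_\Gamma$ and $\overline{z}=(\overline{p},\overline{x})$, while the associated multifunction $\mathcal{H}$ is precisely the map $\Psi$ from \eqref{Psi-map}. It therefore remains only to check the hypotheses of Lemma \ref{Tcone-lemma} and to rewrite its conclusion in the asserted form.

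First I would verify the regularity requirements on $H=\widetilde{F}$. Since $F$ is assumed to be locally Lipschitz and directionally differentiable, the mapping $\widetilde{F}(p,x)=(x,-F(p,x))$ inherits both properties near $(\overline{p},\overline{x})$, and computing the difference quotient componentwise gives
\[
\widetilde{F}'\big((\overline{p},\overline{x});(\Delta p,\Delta x)\big)=\big(\Delta x,\,-F'((\overline{p},\overline{x});(\Delta p,\Delta x))\big).
\]
The set $\Delta={\rm gph}\mathcal{N}_\Gamma$ is closed, since the graph of the (limiting) normal cone mapping is always closed by outer semicontinuity. Thus the standing hypotheses of the first part of Lemma \ref{Tcone-lemma} are satisfied at $\overline{z}=(\overline{p},\overline{x})$.

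Applying the first part of Lemma \ref{Tcone-lemma} then yields
\[
\mathcal{T}_{{\rm gph}\mathcal{S}}(\overline{p},\overline{x})\subseteq\big\{(\Delta p,\Delta x)\ |\ \widetilde{F}'((\overline{p},\overline{x});(\Delta p,\Delta x))\in\mathcal{T}_{{\rm gph}\mathcal{N}_\Gamma}(\widetilde{F}(\overline{p},\overline{x}))\big\}.
\]
Substituting $\widetilde{F}(\overline{p},\overline{x})=(\overline{x},-F(\overline{p},\overline{x}))$ together with the formula for $\widetilde{F}'$ above turns the right-hand side into exactly the set appearing in the statement, which proves the inclusion. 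For the converse I would simply note that $\mathcal{H}=\Psi$; hence when $\Psi$ is metrically subregular at $(\overline{p},\overline{x})$ for the origin, the second part of Lemma \ref{Tcone-lemma} supplies the reverse inclusion, and the two sets coincide. I do not expect any genuine obstacle here: the whole content is absorbed by Lemma \ref{Tcone-lemma}, and the only points deserving a line of care are the transfer of Lipschitzness and directional differentiability from $F$ to $\widetilde{F}$ (with the explicit form of $\widetilde{F}'$) and the closedness of ${\rm gph}\mathcal{N}_\Gamma$.
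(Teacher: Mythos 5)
Your proposal is correct and is exactly the argument the paper intends: the paper itself derives this lemma by noting ${\rm gph}\mathcal{S}=\widetilde{F}^{-1}({\rm gph}\mathcal{N}_\Gamma)$ and invoking Lemma \ref{Tcone-lemma} with $H=\widetilde{F}$, $\Delta={\rm gph}\mathcal{N}_\Gamma$ and $\mathcal{H}=\Psi$. Your added verifications (transfer of Lipschitzness and directional differentiability to $\widetilde{F}$, the explicit form of $\widetilde{F}'$, and closedness of ${\rm gph}\mathcal{N}_\Gamma$) are precisely the details the paper leaves implicit.
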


  Combining Proposition \ref{uestimate} and Theorem \ref{festimate}
  with Lemma \ref{DMSmap} and Lemma \ref{chara-icalm},
  we have the following conclusion for the isolated calmness of the solution mapping $\mathcal{S}$.
 \begin{theorem}\label{Gderiv-MSmap}
  Consider an arbitrary $(\overline{p},\overline{x})\in{\rm gph}\mathcal{S}$
  and write $\overline{v}\!=-F(\overline{p},\overline{x})$. Suppose $\mathcal{G}$
  is metrically subregular at $\overline{x}$ for $0$. If $\mathcal{M}_{\overline{x}}$
  is isolated calm at $\overline{v}$ for some $\overline{\lambda}\in\mathcal{M}_{\overline{x}}(\overline{v})$,
  then
  \begin{equation}\label{gphS-equa}
   \!\mathcal{T}_{{\rm gph}\mathcal{S}}(\overline{p},\overline{x})
    \subseteq\!\left\{(\Delta p,\Delta x)\ \Big|
    \left.\begin{array}{ll}
     F'((\overline{p},\overline{x});(\Delta p,\Delta x))
     +\!\nabla^2\langle \overline{\lambda},g\rangle(\overline{x})\Delta x+\!\nabla g(\overline{x})\mu=0\!\\
     (g'(\overline{x})\Delta x,\mu)\!\in\!\mathcal{T}_{{\rm gph}\mathcal{N}_{K}}(g(\overline{x}),\overline{\lambda})
     \!\end{array}\right.\!\right\},
  \end{equation}
  and consequently $\mathcal{S}$ is isolated calm at $\overline{p}$ for
  $\overline{x}$ if the following implication holds:
  \begin{equation}\label{Cond-MSmap}
    -\!F'((\overline{p},\overline{x});(0,\Delta x))
    -\!\nabla^2\langle \overline{\lambda},g\rangle(\overline{x})\Delta x
    \in\nabla g(\overline{x})D\mathcal{N}_{K}(g(\overline{x})|\overline{\lambda})(g'(\overline{x})\Delta x)
    \Longrightarrow \Delta x=0.
  \end{equation}
  If, in addition, $\Phi$ is metrically subregular at $(\overline{x},\overline{\lambda},\overline{v})$
  for the origin and $\Psi$ defined in \eqref{Psi-map} is metrically subregular
  at $(\overline{p},\overline{x})$ for the origin, then the converse inclusion
  in \eqref{gphS-equa} holds and the implication \eqref{Cond-MSmap}
  is necessary for the isolated calmness of $\mathcal{S}$ at $\overline{p}$ for $\overline{x}$.
 \end{theorem}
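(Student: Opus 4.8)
The plan is to obtain everything by composing the results already established for $\mathcal{N}_{\Gamma}$ with the chain-rule description of $\mathcal{T}_{{\rm gph}\mathcal{S}}$ provided by Lemma \ref{DMSmap}. First I would record that, since $F$ is locally Lipschitz and directionally differentiable, so is $\widetilde{F}$, with $\widetilde{F}'((\overline{p},\overline{x});(\Delta p,\Delta x))=(\Delta x,-F'((\overline{p},\overline{x});(\Delta p,\Delta x)))$; hence Lemma \ref{Tcone-lemma}, through Lemma \ref{DMSmap}, yields the unconditional inclusion $\mathcal{T}_{{\rm gph}\mathcal{S}}(\overline{p},\overline{x})\subseteq\{(\Delta p,\Delta x)\ |\ (\Delta x,-F'((\overline{p},\overline{x});(\Delta p,\Delta x)))\in\mathcal{T}_{{\rm gph}\mathcal{N}_{\Gamma}}(\overline{x},\overline{v})\}$, where $\overline{v}=-F(\overline{p},\overline{x})\in\mathcal{N}_{\Gamma}(\overline{x})$ because $(\overline{p},\overline{x})\in{\rm gph}\mathcal{S}$. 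Feeding this into Proposition \ref{uestimate}, applicable since $\mathcal{G}$ is metrically subregular at $\overline{x}$ and $\mathcal{M}_{\overline{x}}$ is isolated calm at $\overline{v}$ for $\overline{\lambda}$, converts membership of $(\Delta x,-F'(\cdots))$ in $\mathcal{T}_{{\rm gph}\mathcal{N}_{\Gamma}}(\overline{x},\overline{v})$ into the existence of some $\mu\in D\mathcal{N}_K(g(\overline{x})|\overline{\lambda})(g'(\overline{x})\Delta x)$ with $-F'(\cdots)=\nabla^2\langle\overline{\lambda},g\rangle(\overline{x})\Delta x+\nabla g(\overline{x})\mu$. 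Rewriting $\mu\in D\mathcal{N}_K(g(\overline{x})|\overline{\lambda})(g'(\overline{x})\Delta x)$ through the defining equivalence $(g'(\overline{x})\Delta x,\mu)\in\mathcal{T}_{{\rm gph}\mathcal{N}_K}(g(\overline{x}),\overline{\lambda})$ produces exactly the right-hand side of \eqref{gphS-equa}, which establishes the claimed inclusion.

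For the sufficiency of \eqref{Cond-MSmap} I would invoke Lemma \ref{chara-icalm}: $\mathcal{S}$ is isolated calm at $\overline{p}$ for $\overline{x}$ precisely when $(0,\Delta x)\in\mathcal{T}_{{\rm gph}\mathcal{S}}(\overline{p},\overline{x})$ forces $\Delta x=0$. Specialising the inclusion \eqref{gphS-equa} to $\Delta p=0$, any such $\Delta x$ admits $\mu$ with $F'((\overline{p},\overline{x});(0,\Delta x))+\nabla^2\langle\overline{\lambda},g\rangle(\overline{x})\Delta x+\nabla g(\overline{x})\mu=0$ and $\mu\in D\mathcal{N}_K(g(\overline{x})|\overline{\lambda})(g'(\overline{x})\Delta x)$; therefore $-F'((\overline{p},\overline{x});(0,\Delta x))-\nabla^2\langle\overline{\lambda},g\rangle(\overline{x})\Delta x=\nabla g(\overline{x})\mu\in\nabla g(\overline{x})D\mathcal{N}_K(g(\overline{x})|\overline{\lambda})(g'(\overline{x})\Delta x)$, so the hypothesis of \eqref{Cond-MSmap} is met and $\Delta x=0$ follows, giving isolated calmness.

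For the final assertion I would upgrade both inclusions to equalities. When $\Phi$ is in addition metrically subregular at $(\overline{x},\overline{\lambda},\overline{v})$, Theorem \ref{festimate} turns the estimate of Proposition \ref{uestimate} into an equality for $\mathcal{T}_{{\rm gph}\mathcal{N}_{\Gamma}}(\overline{x},\overline{v})$; and when $\Psi$ from \eqref{Psi-map} is metrically subregular at $(\overline{p},\overline{x})$, the converse inclusion in Lemma \ref{DMSmap} holds. Combining these two equalities through the same substitution as above shows that \eqref{gphS-equa} is itself an equality, whence $(0,\Delta x)\in\mathcal{T}_{{\rm gph}\mathcal{S}}(\overline{p},\overline{x})$ holds if and only if the premise of \eqref{Cond-MSmap} is satisfiable. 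By Lemma \ref{chara-icalm} the implication \eqref{Cond-MSmap} is then not merely sufficient but also necessary for the isolated calmness of $\mathcal{S}$ at $\overline{p}$ for $\overline{x}$.

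The only step demanding attention rather than routine bookkeeping is the interface between the two descriptions of the graphical derivative of $\mathcal{N}_K$: Proposition \ref{uestimate} is phrased via $D\mathcal{N}_K$ applied to $g'(\overline{x})\Delta x$, whereas \eqref{gphS-equa} is phrased via the contingent cone $\mathcal{T}_{{\rm gph}\mathcal{N}_K}(g(\overline{x}),\overline{\lambda})$. Keeping the existentially quantified multiplier $\mu$ consistent across this translation, and ensuring that the directional derivative $F'((\overline{p},\overline{x});(\Delta p,\Delta x))$ used is precisely the one delivered by Lemma \ref{DMSmap} so that Lemma \ref{Tcone-lemma} genuinely applies to $\Psi$, is where I expect the care to be needed; everything else is a direct composition of Lemmas \ref{DMSmap} and \ref{chara-icalm} with Proposition \ref{uestimate} and Theorem \ref{festimate}.
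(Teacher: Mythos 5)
Your proposal is correct and follows essentially the same route as the paper, which proves this theorem precisely by combining Lemma \ref{DMSmap} (with its converse under metric subregularity of $\Psi$), Proposition \ref{uestimate}, Theorem \ref{festimate}, and Lemma \ref{chara-icalm}; your spelled-out details, including the definitional translation between $\mu\in D\mathcal{N}_K(g(\overline{x})|\overline{\lambda})(g'(\overline{x})\Delta x)$ and $(g'(\overline{x})\Delta x,\mu)\in\mathcal{T}_{{\rm gph}\mathcal{N}_K}(g(\overline{x}),\overline{\lambda})$, fill in exactly what the paper leaves implicit.
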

 \begin{remark}
  When $F$ is continuously differentiable and
  $F'_{p}(\overline{p},\overline{x})\!:\mathbb{P}\to \mathbb{X}$ is surjective,
  clearly, $\Psi$ is metrically subregular at $(\overline{p},\overline{x})$ for the origin;
  when $\overline{x}$ is a nondegenerate point of the mapping $g$ w.r.t. $K$
  and the mapping $\Xi$, from Proposition \ref{property-MG} it follows that
  $\mathcal{M}_{\overline{x}}$ is isolated calm at $\overline{v}$ for some $\overline{\lambda}\in\mathcal{M}_{\overline{x}}(\overline{v})$ and
  $\Phi$ is metrically subregular at $(\overline{x},\overline{\lambda},\overline{v})$
  for the origin. Thus, Theorem \ref{Gderiv-MSmap} improves the result of
  \cite[Theorem 6.3]{Mordu15}. In particular, the isolated calmness of
  $\mathcal{S}$ at $\overline{p}$ for $\overline{x}$ does not require
  the metric subregularity of $\Phi$.
 \end{remark}

  Next we illustrate an application of Theorem \ref{Gderiv-MSmap} to
  the characterization for the isolated calmness of the KKT solution mapping
  of the canonically perturbed conic program
  \begin{equation}\label{conic-prob}
    \min_{z\in\mathbb{Z}}\big\{f(z)-\langle a,z\rangle\!: G(z)-b\in\mathcal{K}^{\circ}\big\}.
  \end{equation}
  where $p=(a,b)\in\mathbb{Z}\times\mathbb{Y}$ is the perturbation parameter,
  $f\!:\mathbb{Z}\to\mathbb{R}$ and $G\!:\mathbb{Z}\to\mathbb{Y}$ are
  twice continuously differentiable, and $\mathcal{K}\subseteq\mathbb{Y}$
  is a $C^2$-cone reducible closed convex cone.
 \begin{example}\label{example4.2}
  Let $f\!:\mathbb{Z}\to\mathbb{R}$ and $G\!:\mathbb{Z}\to\mathbb{Y}$ be
  twice continuously differentiable functions. Write $p:=(a,b)\in\mathbb{Z}\times\mathbb{Y}$
  and $x:=(z,\lambda)\in\mathbb{X}:=\mathbb{Z}\times\mathbb{Y}$.
  Consider the multifunction
  \[
    \mathcal{S}(p)=\Big\{x\in\mathbb{X}\ |\ 0\in F(p,x)+\widehat{\mathcal{N}}_{K}(x)\Big\}
    \ {\rm with}\
    F(p,x)=\!\left[\begin{matrix}
      \nabla\!f(z)-a +\nabla G(z)\lambda\\
      -G(z)+b
    \end{matrix}\right]
  \]
  where $K=\mathbb{Z}\times\mathcal{K}^{\circ}$.
  The $\mathcal{S}$ is exactly the KKT solution mapping associated to \eqref{conic-prob}.
  Let $\overline{p}=(0,0)$ and $\overline{x}=(\overline{z},\overline{\lambda})$
  be such that $\overline{v}=(0,G(\overline{z}))$. It is clear that
  $\mathcal{G}(x)=x-K$ is metrically subregular at
  $\overline{x}$ for the origin. Since
  \(
  \mathcal{M}_{\overline{x}}(v)
  =\big\{\mu\in\mathcal{N}_{\mathbb{X}}(\overline{z})\times\mathcal{N}_{\mathcal{K}^{\circ}}(\overline{\lambda})
  \ |\ v=\mu\big\},
 \)
 by Proposition \ref{prop-Mx} it is not hard to check that
 $\mathcal{M}_{\overline{x}}$ is isolated calm at $\overline{v}$ for $\overline{v}$.
 Now
 \[
   \Phi(x,\mu,v)=\left(\begin{matrix}
    -v+\mu\\ x-\Pi_{\mathbb{Z}\times\mathcal{K}^{\circ}}(x+\mu)
    \end{matrix}\right)\ \ {\rm and}\ \
   \widetilde{\Phi}(x,\mu,v)
    :=\left(\begin{matrix}
        -v+\mu\\ x\\ \mu\\
      \end{matrix}\right)
      -\left(\begin{matrix}
        \{0\}\\
         {\rm gph}\mathcal{N}_{K}
      \end{matrix}\right).
 \]
  Clearly, $\widetilde{\Phi}$ is metric subregular at $(\overline{x},\overline{v},\overline{v})$
  for the origin, and so is $\Phi$ at $(\overline{x},\overline{v},\overline{v})$
  for the origin by Lemma \ref{WtKKT}. In addition, since
  \(
   \widetilde{F}(p,x):=(z,\lambda,a-\!\nabla f(z)-\!\nabla G(z)\lambda, b-\!G(z))
  \)
 and $\widetilde{F}'(\overline{p},\overline{x})\!:\mathbb{X}
 \times\mathbb{X}\to\mathbb{X}\times\mathbb{X}$ is nonsingular,
 the corresponding $\Psi$ is metrically subregular at $(\overline{p},\overline{x})$
 for the origin. By Theorem \ref{Gderiv-MSmap}, $\mathcal{S}$ is isolated calm
 at $\overline{p}$ for $\overline{x}$ if and only if
 \[
   \left\{\begin{array}{ll}
     \nabla^2L(\overline{z},\overline{\lambda})\Delta z+\nabla G(\overline{z})\Delta\lambda=0,\\
     (G'(\overline{z})\Delta z,\Delta\lambda)\in\mathcal{T}_{{\rm gph}\mathcal{N}_{\mathcal{K}}}(G(\overline{z}),\overline{\lambda})
     \end{array}\right.\Longrightarrow \Delta z=0,\,\Delta\lambda=0.
  \]
  This coincides with the result in \cite[Lemma 18-19]{DingSZ17} for the perturbed
  problem \eqref{conic-prob}.
 \end{example}

 Next we use a specific example of generalized equations to illustrate Theorem \ref{Gderiv-MSmap}.
 \begin{example}\label{example4.1}
  Consider the generalized equation \eqref{GE} with $\Gamma$ given by Example \ref{example1}
  and $F(p,x,t)=-p-(x,t)$ for $p\in\mathbb{R}^3$ and $(x,t)\in\mathbb{R}^2\times\mathbb{R}$.
  Let $\overline{p}=(0,0,0)^{\mathbb{T}}$, $(\overline{x},\overline{t})=((-1,-1)^{\mathbb{T}},0)$
  and $(\overline{\lambda},\overline{\tau})=(0_{2\times 2},-1)$.
  Since $g(\overline{x},\overline{t})=(0_{2\times 2},0)$, it is immediate to have
  \[
   \mathcal{N}_{K}(g(\overline{x},\overline{t}))=\mathbb{S}_{-}^2\times\mathbb{R}_{-}
   \ \ {\rm and}\ \ \mathcal{T}_{K}(g(\overline{x},\overline{t}))=\mathbb{S}_{+}^2\times\mathbb{R}_{+}.
  \]
  By \eqref{grad-gfun}, it is easy to verify that
  ${\rm Ker}(\nabla g(\overline{x},\overline{t}))\cap \mathcal{T}_{\mathcal{N}_{K}(g(\overline{x},\overline{t}))}(\overline{\lambda},\overline{\tau})
   =\{(0_{2\times 2},0)\}$. This shows that the SRCQ for the system $g(x,t)\in K$
  holds at $(\overline{x},\overline{t})$ w.r.t. $\overline{\lambda}$.
 However, since
 \(
  {\rm Ker}(\nabla g(\overline{x},\overline{t}))
  \cap[{\rm lin}(\mathcal{T}_{K}(g(\overline{x},\overline{t})))]^{\perp}\neq\{0_{2\times 2}\},
 \)
 it follows that $\overline{x}$ is a degenerate point.

 \medskip

 Let $(\Delta x,\Delta t)$ be such that the inclusion on the left hand side of \eqref{Cond-MSmap} holds.
 Along with the expression of $F$, there is $(\Delta\lambda,\Delta\tau)\in\mathbb{S}^2\times\mathbb{R}$
 such that $(\Delta x,\Delta t)=\nabla g(\overline{x},\overline{t})(\Delta\lambda,\Delta\tau)$
 and $(g'(\overline{x},\overline{t})(\Delta x,\Delta t),(\Delta\lambda,\Delta\tau))
 \in\mathcal{T}_{{\rm gph}\mathcal{N}_K}((g(\overline{x}),\overline{t}),(\overline{\lambda},\overline{\tau}))$.
 By \cite[Proposition 6.41]{RW98},
 \[
   \mathcal{T}_{{\rm gph}\mathcal{N}_K}((g(\overline{x}),\overline{t}),(\overline{\lambda},\overline{\tau}))
   \subseteq\mathcal{T}_{{\rm gph}\mathcal{N}_{\mathbb{S}_{+}^2}}((g_1(\overline{x},\overline{t}),\overline{\lambda})
   \times\mathcal{T}_{{\rm gph}\mathcal{N}_{\mathbb{R}_{+}}}(g_2(\overline{x},\overline{t}),\overline{\tau}).
 \]
 Together with
 \(
   g'(\overline{x},\overline{t})(\Delta x,\Delta t)
   =\left(\begin{matrix}
        {\rm Diag}(\Delta x)+\Delta t E\\
        \Delta t
        \end{matrix}\right),
 \)
 it immediately follows that
 \[
   ({\rm Diag}(\Delta x)+\Delta t E,\Delta\lambda)\in\mathcal{T}_{{\rm gph}\mathcal{N}_{\mathbb{S}_{+}^2}}((g_1(\overline{x},\overline{t}),\overline{\lambda})
   \ {\rm and}\
   (\Delta t,\Delta\tau)\in \mathcal{T}_{{\rm gph}\mathcal{N}_{\mathbb{R}_{+}}}((g_2(\overline{x},\overline{t}),\overline{\tau}).
 \]
 We calculate that $\mathcal{T}_{{\rm gph}\mathcal{N}_{\mathbb{R}_{+}}}(g_2(\overline{x},\overline{t}),\overline{\tau})
   =\{0\}\times\mathbb{R}$. Together with the last equation,
 we obtain $\Delta t=0$ and $({\rm Diag}(\Delta x),\Delta\lambda)\in\mathcal{T}_{{\rm gph}\mathcal{N}_{\mathbb{S}_{+}^2}}(g_1(\overline{x},\overline{t}),\overline{\lambda})$.
 By \cite[Corollary 3.1]{WZZhang14}, the latter implies $\mathbb{S}_{+}^2\ni{\rm Diag}(\Delta x)\,\bot\,\Delta\lambda\in\mathbb{S}_{-}^2$.
 In addition, from $(\Delta x,\Delta t)=\nabla g(\overline{x},\overline{t})(\Delta\lambda,\Delta\tau)$
 and \eqref{grad-gfun}, we have
 \(
   \Delta x={\rm diag}(\Delta\lambda).
 \)
 The two sides imply $\Delta x=0$. This shows that the implication in \eqref{Cond-MSmap} holds.
 By Theorem \ref{Gderiv-MSmap}, the mapping $\mathcal{S}$
 is isolated calm at $\overline{p}$ for $(\overline{x},\overline{t})$.
\end{example}

 \subsection{Estimation for (regular) coderivative of $\mathcal{N}_\Gamma$}\label{subsec4.2}

  As another application of Proposition \ref{uestimate} and
  Theorem \ref{festimate}, we provide a lower estimation for the regular coderivative of
  $\mathcal{N}_\Gamma$ and an upper estimation for the coderivative
  of $\mathcal{N}_\Gamma$, respectively, without requiring the nondegeneracy
  of the reference point.
 \begin{proposition}\label{estimate-regNcone}
  Consider an arbitrary $(\overline{x},\overline{v})\in{\rm gph}\,\mathcal{N}_\Gamma$.
  If $\mathcal{G}$ is metrically subregular at $\overline{x}$ for $0$
  and $\mathcal{M}_{\overline{x}}$ is isolated calm at $\overline{v}$
  for some $\overline{\lambda}\in\mathcal{M}_{\overline{x}}(\overline{v})$,
  then it holds that
   \begin{align}\label{RNcone-lower}
    \widehat{\mathcal{N}}_{{\rm gph}\mathcal{N}_\Gamma}(\overline{x},\overline{v})
    &\supseteq\Big\{(\xi,\eta)\ |\ \xi\in-\nabla^2\langle\overline{\lambda},g\rangle(\overline{x})(\eta)
           +\nabla\!g(\overline{x}) \widehat{D}^*\mathcal{N}_{K}(g(\overline{x})|\overline{\lambda})(-g'(\overline{x})\eta)\Big\}.
   \end{align}
  \end{proposition}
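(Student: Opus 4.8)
The plan is to derive this lower estimate for the regular coderivative by \emph{dualizing} the upper estimate for the tangent cone already established in Proposition \ref{uestimate}. The starting point is the polarity $\widehat{\mathcal{N}}_{{\rm gph}\mathcal{N}_\Gamma}(\overline{x},\overline{v})=\big(\mathcal{T}_{{\rm gph}\mathcal{N}_\Gamma}(\overline{x},\overline{v})\big)^{\circ}$ recorded in Section \ref{sec2}. Under the present hypotheses, Proposition \ref{uestimate} gives
\[
\mathcal{T}_{{\rm gph}\mathcal{N}_\Gamma}(\overline{x},\overline{v})\subseteq T:=\big\{(d,w)\ |\ w\in\nabla^2\langle\overline{\lambda},g\rangle(\overline{x})d+\nabla g(\overline{x})D\mathcal{N}_K(g(\overline{x})|\overline{\lambda})(g'(\overline{x})d)\big\},
\]
and since polarity reverses inclusions, $T^{\circ}\subseteq\widehat{\mathcal{N}}_{{\rm gph}\mathcal{N}_\Gamma}(\overline{x},\overline{v})$. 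It therefore suffices to show that the set $R$ on the right hand side of \eqref{RNcone-lower} is contained in $T^{\circ}$.

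To streamline the computation I would abbreviate $A:=g'(\overline{x})$, so that $A^{*}=\nabla g(\overline{x})$, and $B:=\nabla^2\langle\overline{\lambda},g\rangle(\overline{x})$, which is self-adjoint. By the definition of the graphical derivative, membership $(d,w)\in T$ means $w=Bd+A^{*}\mu$ for some $\mu$ with $(Ad,\mu)\in\mathcal{T}_{{\rm gph}\mathcal{N}_K}(g(\overline{x}),\overline{\lambda})$. Likewise, $(\xi,\eta)\in R$ means $\xi=-B\eta+A^{*}\zeta$ for some $\zeta\in\widehat{D}^{*}\mathcal{N}_K(g(\overline{x})|\overline{\lambda})(-A\eta)$, which by the definition of the regular coderivative is equivalent to $(\zeta,A\eta)\in\widehat{\mathcal{N}}_{{\rm gph}\mathcal{N}_K}(g(\overline{x}),\overline{\lambda})=\big(\mathcal{T}_{{\rm gph}\mathcal{N}_K}(g(\overline{x}),\overline{\lambda})\big)^{\circ}$.

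The core step is then a direct pairing. Fixing $(\xi,\eta)\in R$ and an arbitrary $(d,w)\in T$ described as above, I would use $\xi+B\eta=A^{*}\zeta$, the self-adjointness of $B$, and the adjoint relation between $A$ and $A^{*}$ to compute
\[
\langle\xi,d\rangle+\langle\eta,w\rangle=\langle\xi+B\eta,d\rangle+\langle A\eta,\mu\rangle=\langle\zeta,Ad\rangle+\langle A\eta,\mu\rangle.
\]
The right hand side is exactly the canonical pairing of $(\zeta,A\eta)$ with $(Ad,\mu)$; since $(\zeta,A\eta)$ lies in the polar of $\mathcal{T}_{{\rm gph}\mathcal{N}_K}(g(\overline{x}),\overline{\lambda})$ while $(Ad,\mu)$ lies in that tangent cone, this quantity is nonpositive. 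As $(d,w)\in T$ is arbitrary, this yields $(\xi,\eta)\in T^{\circ}$, and hence $R\subseteq T^{\circ}\subseteq\widehat{\mathcal{N}}_{{\rm gph}\mathcal{N}_\Gamma}(\overline{x},\overline{v})$, which is the claimed inclusion.

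I do not anticipate a serious obstacle, since the argument is in essence a dualization of an inclusion already proved. The point demanding the most care is the bookkeeping of signs: one must track how the graphical-derivative description of $T$, carrying the term $Bd$ and the forward map $A$, dualizes into the regular-coderivative description of $R$, carrying $-B\eta$ and the argument $-A\eta$, and then confirm that the polarity $\widehat{\mathcal{N}}_{{\rm gph}\mathcal{N}_K}=\big(\mathcal{T}_{{\rm gph}\mathcal{N}_K}\big)^{\circ}$ supplies exactly the correct sign in the final pairing. It is the self-adjointness of the Hessian $B$ together with the identity $A^{*}=\nabla g(\overline{x})$ that make the two descriptions precisely polar to one another.
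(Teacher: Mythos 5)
Your proposal is correct and is essentially the paper's own argument: the paper likewise fixes $(\xi,\eta)$ in the right-hand set, pairs it against an arbitrary tangent vector $(d,w)$ described via Proposition \ref{uestimate}, cancels the Hessian terms by self-adjointness, and concludes nonpositivity from the polarity $\widehat{\mathcal{N}}_{{\rm gph}\mathcal{N}_K}(g(\overline{x}),\overline{\lambda})=\big[\mathcal{T}_{{\rm gph}\mathcal{N}_K}(g(\overline{x}),\overline{\lambda})\big]^{\circ}$ together with the sign convention $(\mu,g'(\overline{x})\eta)\in\widehat{\mathcal{N}}_{{\rm gph}\mathcal{N}_K}(g(\overline{x}),\overline{\lambda})$. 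Your introduction of the intermediate set $T$ and the remark that polarity reverses inclusions is only a cosmetic repackaging of the same computation, not a different route.
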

  \begin{proof}
  Take any point $(\xi,\eta)$ from the set on the right hand side of \eqref{RNcone-lower}.
  Then, there exists $\mu\in\widehat{D}^*\mathcal{N}_{\mathcal{K}}(g(\overline{x})|\overline{\lambda})(-g'(\overline{x})\eta)$
  such that $\xi=-\nabla^2\langle\overline{\lambda},g\rangle(\overline{x})(\eta)+\nabla\!g(\overline{x})\mu$.
  To establish the inclusion \eqref{RNcone-lower}, it suffices to argue that
  $(\xi,\eta)\in[\mathcal{T}_{{\rm gph}\mathcal{N}_\Gamma}(\overline{x},\overline{v})]^{\circ}
  =\widehat{\mathcal{N}}_{{\rm gph}\mathcal{N}_\Gamma}(\overline{x},\overline{v})$.
  Let $(d,w)$ be an arbitrary point from $\mathcal{T}_{{\rm gph}\mathcal{N}_\Gamma}(\overline{x},\overline{v})$.
  By Theorem \ref{uestimate}, there is $u\in\mathbb{Y}$ such that
  \[
    (g'(\overline{x})d,u)\in\mathcal{T}_{{\rm gph}\mathcal{N}_K}(g(\overline{x}),\overline{\lambda})
    \ \ {\rm and}\ \
    w=\nabla^2\langle\overline{\lambda},g\rangle(\overline{x})(d)+\nabla\!g(\overline{x})u.
  \]
  Together with $\xi=-\nabla^2\langle\overline{\lambda},g\rangle(\overline{x})(\eta)+\nabla\!g(\overline{x})\mu$,
  it follows that
  \begin{align}\label{temp-equa40}
   \langle (\xi,\eta),(d,w)\rangle
   &=\langle d,-\nabla^2\langle\overline{\lambda},g\rangle(\overline{x})(\eta)+\nabla\!g(\overline{x})\mu\rangle
    +\langle \nabla^2\langle\overline{\lambda},g\rangle(\overline{x})(d)+\nabla\!g(\overline{x})u,\eta\rangle\nonumber\\
   &=\langle d,\nabla\!g(\overline{x})\mu\rangle+\langle\nabla\!g(\overline{x})u,\eta\rangle.
  \end{align}
  Notice that $\mu\in\widehat{D}^*\mathcal{N}_{K}(g(\overline{x})|\lambda)(-g'(\overline{x})\eta)$.
  Hence, $(\mu,g'(\overline{x})\eta)\in\widehat{\mathcal{N}}_{{\rm gph}\mathcal{N}_K}(g(\overline{x}),\lambda)$.
  Since $(g'(\overline{x})d,u)\in\mathcal{T}_{{\rm gph}\mathcal{N}_K}(g(\overline{x}),\lambda)$
 and $\widehat{\mathcal{N}}_{{\rm gph}\mathcal{N}_{K}}(g(\overline{x}),\lambda)
  =[\mathcal{T}_{{\rm gph}\mathcal{N}_{K}}(g(\overline{x}),\lambda)]^{\circ}$,
  it holds that
  \[
    \langle(\mu,g'(\overline{x})\eta),(g'(\overline{x})d,u)\rangle
    = \langle d,\nabla\!g(\overline{x})\mu\rangle+\langle\nabla\!g(\overline{x})u,\eta\rangle
    \le 0.
  \]
  Together with \eqref{temp-equa40}, $\langle (\xi,\eta), (d,w)\rangle\leq 0$.
  Thus, we obtain $(\xi,\eta)\in[\mathcal{T}_{{\rm gph}\mathcal{N}_\Gamma}(\overline{x},\overline{v})]^{\circ}$.
 \end{proof}
 \begin{proposition}\label{estimate-Ncone}
  Consider an arbitrary $(\overline{x},\overline{v})\in{\rm gph}\,\mathcal{N}_\Gamma$.
  If $\mathcal{M}_{\overline{x}}$ is isolated calm at $\overline{v}$ for 
  $\overline{\lambda}\in\mathcal{M}_{\overline{x}}(\overline{v})$ and
  $\Phi$ is metrically subregular at $(\overline{x},\overline{\lambda},\overline{v})$
  for the origin, then it holds that
   \begin{equation}\label{Ncone-upper}
    \mathcal{N}_{{\rm gph}\mathcal{N}_\Gamma}(\overline{x},\overline{v})
    \subseteq\Big\{(\xi,\eta)\ |\ \xi\in-\nabla^2\langle\overline{\lambda},g\rangle(\overline{x})(\eta)
           +\nabla\!g(\overline{x}) D^*\mathcal{N}_{K}(g(\overline{x})|\overline{\lambda})(-g'(\overline{x})\eta)\Big\}.
   \end{equation}
  \end{proposition}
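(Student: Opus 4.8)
The plan is to transport the limiting normal cone from ${\rm gph}\mathcal{N}_\Gamma$ to $\Phi^{-1}(0,0)$ through the linear projection $\mathcal{A}(x,\lambda,v):=(x,v)$, and then to invoke the preimage normal cone rule together with the coderivative of $\Phi$ already computed in the proof of Proposition \ref{property-MG}. First I would record the structural facts. Since $\mathcal{M}_{\overline{x}}$ is isolated calm at $\overline{v}$ for $\overline{\lambda}$, Proposition \ref{prop-Mx} gives the SRCQ for $g(x)\in K$ at $\overline{x}$ w.r.t.\ $\overline{\lambda}$, whence $\mathcal{M}_{\overline{x}}(\overline{v})=\{\overline{\lambda}\}$ and Robinson's CQ holds throughout a neighborhood of $\overline{x}$. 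By Remark \ref{robust-remark} the metric subregularity of $\mathcal{G}$ is robust, so exactly as in the proof of Lemma \ref{lestimate-lemma1} there is a neighborhood $\mathcal{V}$ of $\overline{x}$ with ${\rm gph}\mathcal{N}_\Gamma\cap(\mathcal{V}\times\mathbb{X})=\mathcal{A}(\Phi^{-1}(0,0))\cap(\mathcal{V}\times\mathbb{X})$.

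Next I would carry out the transfer. Let $(\xi,\eta)\in\mathcal{N}_{{\rm gph}\mathcal{N}_\Gamma}(\overline{x},\overline{v})$ and, by the definition of the limiting normal cone, choose sequences $(x^k,v^k)\to(\overline{x},\overline{v})$ with $(x^k,v^k)\in{\rm gph}\mathcal{N}_\Gamma$ and $(\xi^k,\eta^k)\to(\xi,\eta)$ with $(\xi^k,\eta^k)\in\widehat{\mathcal{N}}_{{\rm gph}\mathcal{N}_\Gamma}(x^k,v^k)$. By Corollary \ref{TNcone-Gamma}, for each large $k$ there is $\lambda^k\in\mathcal{M}_{x^k}(v^k)$, so $(x^k,\lambda^k,v^k)\in\Phi^{-1}(0,0)$; repeating the boundedness and outer-semicontinuity argument from the proof of Proposition \ref{uestimate} (using Robinson's CQ and $\mathcal{M}_{\overline{x}}(\overline{v})=\{\overline{\lambda}\}$) yields $\lambda^k\to\overline{\lambda}$. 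The key observation is that $(\xi^k,0,\eta^k)\in\widehat{\mathcal{N}}_{\Phi^{-1}(0,0)}(x^k,\lambda^k,v^k)$: for any $(x',\lambda',v')\in\Phi^{-1}(0,0)$ near $(x^k,\lambda^k,v^k)$ one has $(x',v')\in{\rm gph}\mathcal{N}_\Gamma$ near $(x^k,v^k)$, the inner product $\langle(\xi^k,0,\eta^k),(x'-x^k,\lambda'-\lambda^k,v'-v^k)\rangle$ equals $\langle(\xi^k,\eta^k),(x'-x^k,v'-v^k)\rangle$, while $\|(x'-x^k,\lambda'-\lambda^k,v'-v^k)\|\ge\|(x'-x^k,v'-v^k)\|$; combining these with the regular normal inequality for $(\xi^k,\eta^k)$ forces the relevant $\limsup$ to be nonpositive. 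Passing to the limit along $k$ gives $(\xi,0,\eta)\in\mathcal{N}_{\Phi^{-1}(0,0)}(\overline{x},\overline{\lambda},\overline{v})$.

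With the transfer in hand, I would apply Lemma \ref{normal-cone-lemma} with $H=\Phi$ and $\Delta=\{(0,0)\}$, so that $\mathcal{N}_\Delta(0,0)=\mathbb{X}\times\mathbb{Y}$: the assumed metric subregularity of $\Phi$ at $(\overline{x},\overline{\lambda},\overline{v})$ gives $\mathcal{N}_{\Phi^{-1}(0,0)}(\overline{x},\overline{\lambda},\overline{v})\subseteq D^*\Phi(\overline{x},\overline{\lambda},\overline{v})[\mathbb{X}\times\mathbb{Y}]$. Hence there exist $(\Delta\xi,\Delta\eta)$ and, by the characterization of $\mathcal{N}_{{\rm gph}\Phi}$ derived in the proof of Proposition \ref{property-MG}, some $\Delta\zeta\in D^*\Pi_K(g(\overline{x})+\overline{\lambda})(\Delta\eta)$ satisfying $\Delta\xi=\eta$, $\Delta\zeta=g'(\overline{x})\eta$, and $\xi=-\nabla^2\langle\overline{\lambda},g\rangle(\overline{x})\eta-\nabla g(\overline{x})\Delta\eta+\nabla g(\overline{x})\Delta\zeta$. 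Converting through the equivalence \eqref{Proj-normal} (with $v'=-g'(\overline{x})\eta$ and $u'=g'(\overline{x})\eta-\Delta\eta$), the inclusion $g'(\overline{x})\eta\in D^*\Pi_K(g(\overline{x})+\overline{\lambda})(\Delta\eta)$ becomes $g'(\overline{x})\eta-\Delta\eta\in D^*\mathcal{N}_K(g(\overline{x})|\overline{\lambda})(-g'(\overline{x})\eta)$; substituting $\Delta\zeta=g'(\overline{x})\eta$ back then produces $\xi\in-\nabla^2\langle\overline{\lambda},g\rangle(\overline{x})\eta+\nabla g(\overline{x})D^*\mathcal{N}_K(g(\overline{x})|\overline{\lambda})(-g'(\overline{x})\eta)$, which is precisely \eqref{Ncone-upper}.

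The hard part will be the transfer step: one must correctly match the limiting-normal sequences across the projection $\mathcal{A}$ and, in particular, establish $\lambda^k\to\overline{\lambda}$, which is exactly where the isolated calmness of $\mathcal{M}_{\overline{x}}$ enters, through uniqueness of the multiplier and the boundedness argument of Proposition \ref{uestimate}. The comparison of norms that makes the regular-normal inequality survive the transfer is immediate here because $\mathcal{A}$ only deletes the $\lambda$-coordinate, so no bi-Lipschitz estimate is needed; the remaining work is the preimage rule and a routine substitution into the already-available formula for $D^*\Phi$.
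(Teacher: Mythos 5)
Your proposal is correct and follows essentially the same route as the paper: both arguments transfer $\mathcal{N}_{{\rm gph}\mathcal{N}_\Gamma}(\overline{x},\overline{v})$ into $\mathcal{N}_{\Phi^{-1}(0,0)}(\overline{x},\overline{\lambda},\overline{v})$ through the projection $\mathcal{A}(x,\lambda,v)=(x,v)$, then apply the first part of Lemma \ref{normal-cone-lemma} under the metric subregularity of $\Phi$, and finish with the formula for $D^*\Phi$ from the proof of Proposition \ref{property-MG} together with the equivalence \eqref{Proj-normal}. The only difference is that where the paper invokes \cite[Theorem 6.43]{RW98} (after establishing boundedness of $\mathcal{A}^{-1}(\mathcal{U})\cap\Phi^{-1}(0,0)$), you prove the transfer by hand via sequences of regular normals padded with a zero $\lambda$-component, relying on the same boundedness-plus-uniqueness argument to get $\lambda^k\to\overline{\lambda}$; this is a correct, self-contained substitute for that citation rather than a genuinely different method.
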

  \begin{proof}
  Let $\mathcal{A}$ be the linear mapping appearing in the proof of Lemma \ref{lestimate},
  and let $\mathcal{U}$ be an arbitrary neighborhood of $(\overline{x},\overline{v})$.
  We first argue that $\mathcal{A}^{-1}(\mathcal{U})\cap\Phi^{-1}(0,0)$ is bounded.
  If not, by the definition of $\mathcal{A}$, there exist $\{(x^k,v^k)\}$ converging to
  $(\overline{x},\overline{v})$ and an unbounded $\{\mu^k\}$ such that for each $k$, $(x^k,\mu^k,v^k)\in\mathcal{A}^{-1}(\mathcal{U})\cap\Phi^{-1}(0,0)$, that is,
  \[
    v^k=\nabla g(x^k)\mu^k\ \ {\rm and}\ \ \mu^k\in\mathcal{N}_K(g(x^k))
    \quad\ \forall k.
  \]
  Write $\widetilde{\mu}^k=\frac{\mu^k}{\|\mu^k\|}$ and $\widetilde{v}^k=\frac{v^k}{\|\mu^k\|}$.
  We assume (if necessary taking a subsequence) that $\widetilde{\mu}^k\to\widetilde{\mu}$
  with $\|\widetilde{\mu}\|=1$. Notice that $\widetilde{v}^k=\nabla g(x^k)\widetilde{\mu}^k$ and $\widetilde{\mu}^k\in\mathcal{N}_K(g(x^k))$. From the outer semicontinuity of $\mathcal{N}_K$,
  $\widetilde{\mu}\in\mathcal{N}_K(g(\overline{x}))$ and $\nabla g(\overline{x})\widetilde{\mu}=0$.
  This, by the isolated calmness of $\mathcal{M}_{\overline{x}}$ at $\overline{v}$,
  implies that $\widetilde{\mu}=0$, a contradiction to $\|\widetilde{\mu}\|=1$.
  Now, by \cite[Theorem 6.43]{RW98}, from ${\rm gph}\mathcal{N}_\Gamma\cap(\mathcal{V}\times\mathbb{X})
  =\mathcal{A}(\Phi^{-1}(0,0))\cap(\mathcal{V}\times\mathbb{X})$ for a neighborhood $\mathcal{V}$
  of $\overline{x}$ we have
  \begin{align}\label{temp-Ncone-Nomega}
   \mathcal{N}_{{\rm gph}\mathcal{N}_\Gamma}(\overline{x},\overline{v})
   &\subseteq\bigcup_{\overline{z}\in\mathcal{A}^{-1}(\overline{x},\overline{v})\cap\Phi^{-1}(0,0)}
    \Big\{(\xi,\eta)\in\mathbb{X}\times\mathbb{Y}\ |\ \mathcal{A}^*(\xi,\eta)\in\mathcal{N}_{\Phi^{-1}(0,0)}(\overline{z})\Big\}\nonumber\\
    &=\Big\{(\xi,\eta)\in\mathbb{X}\times\mathbb{Y}\ |\ \mathcal{A}^*(\xi,\eta)\in\mathcal{N}_{\Phi^{-1}(0,0)}(\overline{x},\overline{\lambda},\overline{v})\Big\},
  \end{align}
  where the equality is by the definition of $\mathcal{A}$
  and $\mathcal{M}_{\overline{x}}(\overline{v})=\{\overline{\lambda}\}$.
  Since $\Phi$ is metrically subregular at $(\overline{x},\overline{\lambda},\overline{v})$
  for the origin, applying the first part of Lemma \ref{normal-cone-lemma} yields
  \[
    \mathcal{N}_{\Phi^{-1}(0,0)}(\overline{x},\overline{\lambda},\overline{v})
    \subseteq \bigcup_{(d,w)\in\mathbb{X}\times\mathbb{Y}}D^*\Phi(\overline{x},\overline{\lambda},\overline{v})(d,w).
  \]
  For any given $(d,w)\in\mathbb{X}\times\mathbb{Y}$,
  from the proof of Proposition \ref{property-MG}(a) it follows that
  \[
    D^*\Phi(\overline{x},\overline{\lambda},\overline{v})(d,w)
    \!=\!\left[\begin{matrix}
           \nabla^2\langle\overline{\lambda},g\rangle(\overline{x})d
           \!+\!\nabla\!g(\overline{x})w\\
            g'(\overline{x})d\\
            -d
       \end{matrix}\right]
       +\left(\begin{matrix}
      \nabla\!g(\overline{x})\\
       I\\ 0
       \end{matrix}\right)\!D^*\Pi_K(g(\overline{x})\!+\!\overline{\lambda})(-w).
  \]
  By combining the last two equations with \eqref{Proj-normal}, it is not difficult to obtain that
  \begin{align*}
   &\mathcal{N}_{\Phi^{-1}(0,0)}(\overline{x},\overline{\lambda},\overline{v})\\
   &\subseteq\!\bigcup_{(d,w)\in\mathbb{X}\times\mathbb{Y}}
    \left\{\!\left(\begin{matrix}
            \nabla^2\langle\overline{\lambda},g\rangle(\overline{x})d+\!\nabla g(\overline{x})u'\\
            g'(\overline{x})d+u'-w\\ -d
            \end{matrix}\right)\ \bigg|\
    \left(\begin{matrix}
     u'\\u'\!-w
     \end{matrix}\right)\in \mathcal{N}_{{\rm gph}\mathcal{N}_K}(g(\overline{x}),\overline{\lambda})\right\}.
  \end{align*}
  Together with \eqref{temp-Ncone-Nomega} and $\mathcal{A}^*(\xi,\eta)=(\xi,0,\eta)$,
  we obtain the following inclusion
  \[
    \mathcal{N}_{{\rm gph}\mathcal{N}_\Gamma}(\overline{x},\overline{v})
    \subseteq\left\{(\xi,\eta)\in\mathbb{X}\times\mathbb{Y}\ \Big|
    \left.\begin{array}{ll}
     \xi=\nabla^2\langle\overline{\lambda},g\rangle(\overline{x})(-\eta)+\nabla\!g(\overline{x})z,\\
     (z,g'(\overline{x})\eta)\!\in\mathcal{N}_{{\rm gph}\mathcal{N}_{K}}(g(\overline{x}),\overline{\lambda})
     \end{array}\right.\!\right\},
   \]
  which is equivalent to the inclusion in \eqref{Ncone-upper}.
  The proof is then completed.
  \end{proof}

  Exact characterizations for $\widehat{\mathcal{N}}_{{\rm gph}\mathcal{N}_\Gamma}(\overline{x},\overline{v})$
  and $\mathcal{N}_{{\rm gph}\mathcal{N}_\Gamma}(\overline{x},\overline{v})$
  were given in \cite[Theorem 4.1]{Mordu15} and in \cite[Theorem 7]{Outrata11},
  respectively, under the standard reducibility and nondegeneracy assumption,
  and they were recently obtained in \cite{Gfrerer17} under a weakened version
  of reducibility but still the nondegeneracy assumption. Here, we only provide
  a one-sided estimation without the nondegeneracy assumption, and it is not unclear
  whether the converse inclusions in \eqref{RNcone-lower}-\eqref{Ncone-upper}
  hold or not without nondegeneracy.

  \bigskip
  \noindent
  {\large\bf Acknowledgements}\ \ The authors are deeply grateful for
  the two referees' comments, which give them much help to improve
  the original manuscript. The authors also would like to thank Professor Mordukhovich,
  from Wayne State University, for his helpful suggestions on the revision of this manuscript.

 \medskip
 \noindent
 {\bf\large Appendix}
 \begin{alemma}\label{calculus-rule}
  Let $G_1\!:\mathbb{X}\to\mathbb{Y}$ be a single-valued mapping and
  $G_2:\mathbb{X}\rightrightarrows\mathbb{Z}$ be an arbitrary set-valued mapping.
  Define the set-valued mapping $G\!:\mathbb{X}\rightrightarrows\mathbb{Y}\times\mathbb{Z}$ by
  \(
    G(x):=\left(\begin{matrix}
               G_1(x)\\ G_2(x)
              \end{matrix}\right)
  \)
  for $x\in\mathbb{X}$. Consider a point $(\overline{x},(\overline{y},\overline{z}))\in{\rm gph}\,G$.
  If $G_1$ is strictly differentiable at $\overline{x}$, then
  \[
    D^*G(\overline{x}|(\overline{y},\overline{z}))(\Delta u,\Delta v)
    =\nabla\!G_1(\overline{x})\Delta u + D^*G_2(\overline{x}|\overline{z})(\Delta v)
    \quad \forall(\Delta u,\Delta v)\in\mathbb{Y}\times\mathbb{Z}
  \]
 \end{alemma}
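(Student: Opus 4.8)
The plan is to realize ${\rm gph}\,G$ as the inverse image of a product set under a smooth bijection, and then apply the preimage rule for the limiting normal cone. Concretely, I would define the mapping $\Psi\colon\mathbb{X}\times\mathbb{Y}\times\mathbb{Z}\to\mathbb{Y}\times\mathbb{X}\times\mathbb{Z}$ by $\Psi(x,y,z):=(y-G_1(x),x,z)$ and set $\Omega:=\{0\}\times{\rm gph}\,G_2$. Since $(x,y,z)\in{\rm gph}\,G$ is characterized by $y=G_1(x)$ together with $(x,z)\in{\rm gph}\,G_2$, one has ${\rm gph}\,G=\Psi^{-1}(\Omega)$ and $\Psi(\overline{x},\overline{y},\overline{z})=(0,\overline{x},\overline{z})\in\Omega$ (I work under the standing assumption that ${\rm gph}\,G_2$, and hence ${\rm gph}\,G$, is locally closed at the reference point, as required for the coderivatives to be meaningful). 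The strict differentiability of $G_1$ at $\overline{x}$ makes $\Psi$ strictly differentiable at $(\overline{x},\overline{y},\overline{z})$ with derivative $(a,b,c)\mapsto(b-G_1'(\overline{x})a,a,c)$, which is a linear \emph{bijection}, hence in particular surjective.

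First I would invoke the inverse-image formula for the limiting normal cone under a strictly differentiable map with surjective derivative (the same tool as \cite[Exercise 6.7]{RW98} used in the proof of Theorem \ref{NK-calm}; see also \cite[Theorem 1.17]{Mordu06}), which yields the equality
\[
  \mathcal{N}_{{\rm gph}\,G}(\overline{x},\overline{y},\overline{z})
  =\nabla\Psi(\overline{x},\overline{y},\overline{z})\,\mathcal{N}_{\Omega}(0,\overline{x},\overline{z}).
\]
Equality holds here precisely because surjectivity of $\Psi'(\overline{x},\overline{y},\overline{z})$ forces the attendant qualification condition to be trivially satisfied. Next I would compute the two ingredients: the product structure of $\Omega$ gives $\mathcal{N}_{\Omega}(0,\overline{x},\overline{z})=\mathbb{Y}\times\mathcal{N}_{{\rm gph}\,G_2}(\overline{x},\overline{z})$, while a direct computation of the adjoint of $\Psi'(\overline{x},\overline{y},\overline{z})$ gives $\nabla\Psi(\overline{x},\overline{y},\overline{z})(\eta,a,b)=(a-\nabla G_1(\overline{x})\eta,\eta,b)$. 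Substituting these produces
\[
  \mathcal{N}_{{\rm gph}\,G}(\overline{x},\overline{y},\overline{z})
  =\big\{(a-\nabla G_1(\overline{x})\eta,\,\eta,\,b)\ |\ \eta\in\mathbb{Y},\,(a,b)\in\mathcal{N}_{{\rm gph}\,G_2}(\overline{x},\overline{z})\big\}.
\]

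Finally I would translate this into coderivative language. By definition, $\Delta w\in D^*G(\overline{x}|(\overline{y},\overline{z}))(\Delta u,\Delta v)$ means $(\Delta w,-\Delta u,-\Delta v)\in\mathcal{N}_{{\rm gph}\,G}(\overline{x},\overline{y},\overline{z})$; matching this against the displayed set forces $\eta=-\Delta u$, $b=-\Delta v$ and $a=\Delta w+\nabla G_1(\overline{x})\Delta u$, and the remaining constraint $(a,-\Delta v)\in\mathcal{N}_{{\rm gph}\,G_2}(\overline{x},\overline{z})$ reads exactly $a\in D^*G_2(\overline{x}|\overline{z})(\Delta v)$. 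Rearranging gives $\Delta w\in\nabla G_1(\overline{x})\Delta u+D^*G_2(\overline{x}|\overline{z})(\Delta v)$, which is the claimed identity. The genuinely routine parts are the adjoint computation and the product rule for $\mathcal{N}_{\Omega}$; the step that requires care is the bijectivity (surjectivity) verification for $\Psi'$, since it is exactly this that upgrades the generally one-sided preimage inclusion to an equality and dispenses with any nontrivial constraint qualification, hand in hand with the careful bookkeeping of the sign conventions $(-\Delta u,-\Delta v)$ that link the normal cone to the coderivative.
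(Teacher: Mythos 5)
Your proof is correct, but it takes a genuinely different route from the paper's. The paper decomposes $G$ as the sum of the strictly differentiable single-valued map $x\mapsto (G_1(x),0)$ and the set-valued map $H(x):=(0,G_2(x))$, observes directly from the definition that $D^*H(\overline{x}|(0,\overline{z}))(\Delta u,\Delta v)=D^*G_2(\overline{x}|\overline{z})(\Delta v)$, and then invokes the coderivative sum rule \cite[Theorem 1.62]{Mordu06}; the whole argument is three lines. You instead work at the level of normal cones: you realize ${\rm gph}\,G$ as $\Psi^{-1}(\{0\}\times{\rm gph}\,G_2)$ under the change of variables $\Psi(x,y,z)=(y-G_1(x),x,z)$, check that its strict derivative is a linear bijection, and combine the inverse-image rule with the exact product formula for normal cones. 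What your route buys is self-containedness and transparency: the inverse-image rule under a surjective strict derivative plus the product rule are more primitive tools than the packaged sum rule, and your map $\Psi$ makes explicit the graphical change of variables that underlies the sum rule's own proof. What the paper's route buys is brevity, and it matches the lemma's hypothesis verbatim: $G_2$ is an \emph{arbitrary} set-valued mapping, with no closedness imposed, whereas you add local closedness of ${\rm gph}\,G_2$ as a standing assumption --- harmless for every application in this paper, but a mild restriction relative to the statement as given.

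Two small repairs are needed in your write-up. First, cite \cite[Theorem 1.17]{Mordu06} (strictly differentiable maps with surjective derivative) as the primary tool rather than \cite[Exercise 6.7]{RW98}: the latter is formulated for smooth maps, while your $\Psi$ is only strictly differentiable at the reference point, since $G_1$ is not assumed differentiable at nearby points. Second, there is a sign slip in the final matching step: from $\Delta w=a-\nabla G_1(\overline{x})\eta$ and $\eta=-\Delta u$ one gets $a=\Delta w-\nabla G_1(\overline{x})\Delta u$, not $a=\Delta w+\nabla G_1(\overline{x})\Delta u$. With the corrected relation, the constraint $a\in D^*G_2(\overline{x}|\overline{z})(\Delta v)$ gives precisely $\Delta w\in\nabla G_1(\overline{x})\Delta u+D^*G_2(\overline{x}|\overline{z})(\Delta v)$, so the identity you state at the end is right, but the intermediate line as written contradicts it and should be fixed.
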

 \begin{proof}
  Let $H(x):=\left(\begin{matrix}
              0\\ G_2(x)
             \end{matrix}\right)$ for $x\in\mathbb{X}$.
 By the definition of coderivative, we have that
 \[
  D^*H(\overline{x}|(0,\overline{z}))(\Delta u,\Delta v)=D^*G_2(\overline{x}|\overline{z})(\Delta v).
 \]
  Notice that $G(x)=H(x)+
  \left(\begin{matrix}
              G_1(x)\\ 0
             \end{matrix}\right)$ for $x\in\mathbb{X}$.
 By \cite[Theorem 1.62]{Mordu06}, it follows that
  \[
    D^*G(\overline{x}|(\overline{y},\overline{z}))(\Delta u,\Delta v)
    =\nabla\!G_1(\overline{x})\Delta u +D^*H(\overline{x}|(0,\overline{z}))(\Delta u,\Delta v).
  \]
  The desired result then follows by combining the last two equations.
 \end{proof}
 \end{document}